\newcommand{\CC}{{\mathbb C}}
\newcommand{\FF}{{\mathbb F}}
\newcommand{\NN}{{\mathbb N}}
\newcommand{\PP}{{\mathbb P}}
\newcommand{\QQ}{{\mathbb Q}}
\newcommand{\ZZ}{{\mathbb Z}}
\def\C{\mathbb{C}}
\def\D{\mathbb{D}}
\def\P{\mathbb{P}}
\def\R{\mathbb{R}}
\def\Z{\mathbb{Z}}
\let\mathcal\mathscr
\def\cD{\mathcal{D}}
\def\cE{\mathcal{E}}
\def\cF{\mathcal{F}}
\def\cG{\mathcal{G}}
\def\cH{\mathcal{H}}
\def\cL{\mathcal{L}}
\def\cO{\mathcal{O}}
\def\cU{\mathcal{U}}
\def\cX{\mathcal{X}}
\def\cY{\mathcal{Y}}
\def\phi{{\varphi}}
\DeclareMathOperator{\Aut}{Aut}
\DeclareMathOperator{\Bl}{Bl}
\DeclareMathOperator{\ch}{ch}
\DeclareMathOperator{\CH}{CH}
\DeclareMathOperator{\cl}{cl}
\DeclareMathOperator{\coh}{coh}
\DeclareMathOperator{\codim}{codim}
\DeclareMathOperator{\Def}{Def}
\DeclareMathOperator{\Db}{D\textsuperscript{\rm b}}
\def\div{\mathop{\rm div}\nolimits}
\DeclareMathOperator{\divisore}{div}
\DeclareMathOperator{\Ext}{Ext}
\DeclareMathOperator{\Fix}{Fix}
\DeclareMathOperator{\Gr}{Gr}
\DeclareMathOperator{\Hom}{Hom}
\DeclareMathOperator{\Id}{Id}
\def\Im{\mathop{\rm Im}\nolimits}
\DeclareMathOperator{\jb}{\ov{J}}
\DeclareMathOperator{\Mov}{Mov}
\DeclareMathOperator{\mult}{mult}
\DeclareMathOperator{\Nef}{Nef}
\DeclareMathOperator{\NS}{NS}
\DeclareMathOperator{\Pic}{Pic}
\DeclareMathOperator{\Pos}{Pos}
\DeclareMathOperator{\Sp}{Sp}
\DeclareMathOperator{\suppdet}{Supp_{det}}
\DeclareMathOperator{\Stab}{Stab}
\DeclareMathOperator{\supp}{supp}
\DeclareMathOperator{\td}{td}
\newcommand{\la}{\langle}
\newcommand{\ov}{\overline}
\newcommand{\ra}{\rangle}
\newcommand{\wt}{\widetilde}
\def\lra{\longrightarrow}
\def\llra{\hbox to 10mm{\rightarrowfill}}
\def\lllra{\hbox to 15mm{\rightarrowfill}}
\def\llla{\hbox to 10mm{\leftarrowfill}}
\def\lllla{\hbox to 15mm{\leftarrowfill}}
\def\dra{\dashrightarrow}
\def\thra{\twoheadrightarrow}
\def\hra{\hookrightarrow}
\newtheorem{lemm}{Lemma}[section]
\newtheorem{theo}[lemm]{Theorem}
\newtheorem*{MainThm*}{Main Theorem}
\newtheorem{coro}[lemm]{Corollary}
\newtheorem{prop}[lemm]{Proposition}
\theoremstyle{definition}
\newtheorem{defi}[lemm]{Definition}
\newtheorem{rema}[lemm]{Remark}
\newtheorem{exam}[lemm]{Example}
\theoremstyle{remark}
\newtheorem*{remark*}{Remark}
\newtheorem*{note*}{Note}
\newtheorem*{lemmared*}{Lemma}
\def\blank{\underline{\hphantom{A}}}
\def\@tocline#1#2#3#4#5#6#7{\relax
  \ifnum #1>\c@tocdepth 
  \else
    \par \addpenalty\@secpenalty\addvspace{#2}%
    \begingroup \hyphenpenalty\@M
    \@ifempty{#4}{%
      \@tempdima\csname r@tocindent\number#1\endcsname\relax
    }{%
      \@tempdima#4\relax
    }%
    \parindent\z@ \leftskip#3\relax \advance\leftskip\@tempdima\relax
    \rightskip\@pnumwidth plus4em \parfillskip-\@pnumwidth
    #5\leavevmode\hskip-\@tempdima
      \ifcase #1
       \or\or \hskip 1em \or \hskip 2em \else \hskip 3em \fi%
      #6\nobreak\relax
    \dotfill\hbox to\@pnumwidth{\@tocpagenum{#7}}\par
    \nobreak
    \endgroup
  \fi}
\setlist[itemize]{noitemsep,nolistsep}
\setlist[enumerate]{noitemsep,nolistsep}
\title{The geometry of antisymplectic involutions, I}
\begin{document}

\author{Laure Flapan}
\address{\parbox{0.9\textwidth}{Michigan State University\\[1pt]
Department of Mathematics\\[1pt]
619 Red Cedar Road, East Lansing, MI 48824, USA
\vspace{1mm}}}
\email{{flapanla@msu.edu}}

\author{Emanuele Macr\`i}
\address{\parbox{0.9\textwidth}{Universit\'e Paris-Saclay\\[1pt]
CNRS, Laboratoire de Math\'ematiques d'Orsay\\[1pt]
Rue Michel Magat, B\^at. 307, 91405 Orsay, France
\vspace{1mm}}}
\email{{emanuele.macri@universite-paris-saclay.fr}}

\author{Kieran G.~O'Grady}
\address{\parbox{0.9\textwidth}{Sapienza Universit\`a di Roma\\[1pt]
Dipartimento di Matematica\\[1pt]
P.le A. Moro 5, 00185 Roma, Italia
\vspace{1mm}}}
\email{{ogrady@mat.uniroma1.it}}

\author{Giulia Sacc\`a}
\address{\parbox{0.9\textwidth}{Columbia University\\[1pt]
Department of Mathematics\\[1pt]
2990 Broadway, New York, NY 10027, USA
\vspace{1mm}}}
\email{{gs3032@columbia.edu}}

\subjclass[2020]{14C20, 14D06, 14D20, 14F08, 14J42, 14J60}
\keywords{Projective hyperk\"ahler manifolds, antisymplectic involutions, Lagrangian fibrations, moduli spaces, Bridgeland stability}
\thanks{L.F.~was partially supported by the NSF grants DMS-1803082, DMS-1645877, as well as DMS-1440140, while in residence at the Mathematical Sciences Research Institute (MSRI) in Berkeley, California during the Spring 2019 semester. E.M.~was partially supported by the NSF grant DMS-1700751, by the Institut des Hautes \'Etudes Scientifiques (IH\'ES), by a Poste Rouge CNRS at Universit\'e Paris-Sud and by the ERC Synergy Grant ERC-2020-SyG-854361-HyperK. K.O'G.~was partially supported by PRIN 2017 \lq\lq Moduli and Lie Theory\rq\rq.  G.S.~was partially supported by the NSF grant DMS-1801818.}

\dedicatory{\`A Olivier Debarre, avec admiration et amiti\'e}

\begin{abstract}
We study fixed loci of antisymplectic involutions on projective hyperk\"ahler manifolds of $\mathrm{K3}^{[n]}$-type.
When the involution is induced by an ample class of square 2 in the Beauville-Bogomolov-Fujiki lattice, we show that the number of connected components of the fixed locus is equal to the divisibility of the class, which is either 1 or 2.
\end{abstract}

\maketitle

\tableofcontents
\setcounter{tocdepth}{1}

\section{Introduction}\label{sec:intro}

An involution of a compact irreducible hyperk\"ahler (HK) manifold is antisymplectic if it acts as $(-1)$ on the space of global holomorphic $2$-forms.
The goal of this paper is to describe fixed loci of antisymplectic involutions of HK manifolds of $\mathrm{K3}^{[n]}$-type, i.e., deformations of the Hilbert scheme of length $n$ subschemes of a K3 surface.
The involutions that we study have invariant sublattice in the second integral cohomology spanned by a class of square~2.
HK's with such an involution vary in families of the maximum allowable dimension and appear quite frequently; they are the simplest antisymplectic involutions that deform in codimension one.
We carry out our analysis by letting the HK degenerate.
As a matter of fact, the degeneration is quite mild --- it is the contraction of a (smooth) HK which is birational to a Lagrangian HK---but the involution on the singular variety is descended from an  antisymplectic involution on the smooth HK whose $(+1)$ eigenspace in $H^2$ has rank $2$.
 Wall crossing techniques allow us to reduce everything to an analysis of fiberwise  involutions of Lagrangian HK's.

The motivation for studying these fixed loci comes from two different directions. 
The first direction is to explore further the relationship between HK manifolds of $\mathrm{K3}^{[n]}$-type and Fano manifolds of K3 type, seen in classical constructions \cite{BD:cubic,DV:HK,LLSvS:cubics,Kuz:Kuchle} as well as recent works \cite{IM:FanoCY,FM:FanoK3,BFM:DV}. These constructions produce a HK manifold of $\mathrm{K3}^{[n]}$-type  starting from a Fano manifold  of K3 type. The results of this paper together with the subsequent paper yield a reverse process in which one starts with a HK manifold $X$ of $\mathrm{K3}^{[n]}$-type and produces a corresponding Fano manifold arising as a connected component of the fixed locus of an antisymplectic involution on $X$.

The second motivating direction for studying these fixed loci is to produce covering families of Lagrangian cycles on HK manifolds. Low-dimensional examples, discussed below, suggest that one component of the fixed locus of an antisymplectic involution of a HK of $\mathrm{K3}^{[n]}$-type with the maximum number of moduli  might, up to a multiple, be contained in a covering family of Lagrangian cycles.

\subsection{Motivating examples}\label{subsec:background}

The following are examples of antisymplectic involutions of HK's of $\mathrm{K3}^{[n]}$-type with the maximum number of moduli:
\begin{enumerate}[(a)]
\item\label{enum:background1} Let $(S,h)$ be a polarized K3 surface of degree $2$, and let $\tau\in\Aut(S)$ be the covering involution of the $2:1$ map $S\to |h|^{\vee}\cong\PP^2$.
\item\label{enum:background2} Let $f\colon X\to Y$ be the natural double cover of a general EPW sextic $Y\subset \PP^5$, and let $\tau\in\Aut(X)$  be the covering involution of $f$;  $X$ is of $\mathrm{K3}^{[2]}$-type (see~\cite[Theorem 1.1]{Kieran:doubleEPW}).
\item\label{enum:background3} Let $Y\subset \PP^5$ be a smooth cubic fourfold not containing a plane, and let $Z$ be the associated Lehn--Lehn--Sorger--van Straten (LLSvS) HK manifold of $\mathrm{K3}^{[4]}$-type \cite{LLSvS:cubics}. Recall that $Z$ is a specific rationally connected quotient of $M_3(Y)$, the irreducible component of the Hilbert scheme of $Y$ containing smooth twisted cubic rational curves. More precisely there is an open dense subset $Z^0\hookrightarrow Z$ whose points parametrize $\PP^2$-families of (possibly degenerate) twisted cubic curves on a cubic surface in $Y$. The complement $Z\setminus Z^0$ is isomorphic to $Y$ itself, and the point $y\in Y$ parametrizes the family of cubic curves  with an embedded point at $y$. One defines an involution  $Z^0\to Z^0$ by mapping a twisted cubic curve $C\subset Y$ to the equivalence class representing the residual intersection   of $Y$ and a quadric surface in $\la C\ra$ containing $C$. The involution extends to a regular involution $\tau$ and $Z\setminus Z_0\cong Y$ is a connected component of the fixed locus (see~\cite{Lehn:Oberwolfach,LLSvS:cubics}, \cite[Remark 2.19]{CCL:ChowLLSvS}, and \cite{LPZ:cubics}). 
\end{enumerate}

Let us look at the fixed loci in the above three examples. Recall that the fixed locus of an antisymplectic involution is a Lagrangian submanifold (see e.g., \cite[Lemma 1]{beauville:involutions}).
In~\ref{enum:background1} the fixed locus is an irreducible curve of general type and it moves in a family of Lagrangians covering $S$, namely the elements of 
$|3h|$.
In~\ref{enum:background2} the fixed locus is an irreducible surface of general type \cite[Corollary 1.12]{Ferretti:thesis},\cite[Proposition 5.6]{IM:doubleEPW}.
The fixed locus itself does not move, but there is a covering family of Lagrangian cycles containing $2\Fix(\tau)$.
In~\ref{enum:background3} the picture is even more intriguing.
The fixed locus $\Fix(\tau)$ has two connected components, one whose points are in one-to-one correspondence with Cayley cubic surfaces in $Y$, and one isomorphic to the variety $Y$ itself. As mentioned above we expect that a multiple of the first component moves in a  covering family of Lagrangian subvarieties. 

We predict that the above examples are instances of a general phenomenon.
In particular, the main result of this paper characterizes the number of connected components of the fixed locus of the antisymplectic involution considered. In the above three examples, the polarization has square $2$ with respect to the Beauville-Bogomolov-Fujiki (BBF) form and hence the involution $H^2(\tau)$ on $H^2(X,\Z)$ induced by $\tau$ is equal to the reflection in the polarization (see~\cite[Theorem 1.1]{Kieran:doubleEPW} for Case~\ref{enum:background2} and~\cite[Section 5.2]{LPZ:cubics} for Case~\ref{enum:background3}). In particular 
the families have maximal dimension. However there is 
 a key difference between the polarizations in Cases~\ref{enum:background1} and \ref{enum:background2} on one hand and Case~\ref{enum:background3} on the other. Recall that the divisibility of a non-zero element $v$ in a (non-degenerate) lattice $(L,(\blank,\blank))$ the positive generator of the ideal $(v, L)\subset\ZZ$.
In cases Cases~\ref{enum:background1} and \ref{enum:background2}  the polarization has divisibility $1$, while in Case~\ref{enum:background3} the polarization has divisibility  $2$.

\subsection{Statement of the main result}\label{subsec:MainResults}

First we recall some well-known results on HK's of $\mathrm{K3}^{[n]}$-type.
Let $X$ be such a HK, let $q_X$ be the BBF quadratic form on $H^2(X;\ZZ)$, and let $\lambda$ be an ample class on $X$ such that $q_X(\lambda)=2$.
By the Global Torelli Theorem (see~\cite{Verbitsky:torelli,Markman:Monodromy,Huybrechts:Torelli} and \cite[Theorem 1.3]{Markman:Survey}), there exists  an involution $\tau\in\Aut(X)$  such that the invariant lattice
$H^2(\tau)_{+}=\ZZ\lambda$, and this $\tau$  is unique (\cite{Bea:MukaiInvolution}; also \cite[Proposition 4.1]{Debarre:Survey}). Additionally, we recall that by the surjectivity of the period map and an explicit lattice computation, polarized $(X,\lambda)$ of $\mathrm{K3}^{[n]}$-type with $q_X(\lambda)=2$ and $\divisore(\lambda)=1$ exist for any $n$, while those with $\divisore(\lambda)=2$ exist if and only if $4\,|\,n$ (note that $\divisore(\lambda)$ divides 
$q_X(\lambda)$).

\begin{MainThm*}
Let $(X,\lambda)$ be a polarized HK of $K3^{[n]}$-type such that $q_X(\lambda)=2$, and let $\tau\in\Aut(X)$ be the involution associated to $\lambda$.
Then the number of connected components of  $\Fix(\tau)$ is equal to the divisibility $\divisore(\lambda)$.
\end{MainThm*}

In the case of divisibility~2, we show that the two connected components of the fixed locus can be distinguished by the behavior of any lift of the involution to the total space of the line bundle with class $\lambda$. More precisely (see Theorem ~\ref{thm:linearization}), over one component the action on the fibers of $\lambda$ is trivial while on the other component it is  multiplication by $(-1)$.

\subsection{Idea(s) from the proof of the Main Theorem}\label{subsec:pazzaidea}

Let $(n,d)\in\NN_{+}\times\{1,2\}$ with $d=1$ if $n\not\equiv 0\pmod{4}$. 
By Markman's results on monodromy (\cite{Markman:Survey}\cite[Proposition 3.2]{Apostolov:Moduli}), there is a single deformation class of polarized HK's $(X,\lambda)$ of $\mathrm{K3}^{[n]}$-type with $q_X(\lambda)=2$ and $\divisore(\lambda)=d$. 
It follows that the deformation class of $\Fix(\tau)$ is independent of $(X,\lambda)$, 
where $\tau\in\Aut(X)$ is the unique involution such that $H^2(\tau)_{+}=\ZZ\lambda$. Hence in order to prove our results it suffices to prove them for a single $(X,\lambda)$. This is more easily said than done. Even in the case of double EPW sextics, where the fixed locus has an explicit embedding in $\PP^5$, the properties of $\Fix(\tau)$ are obtained (following Ferretti~\cite{Ferretti:thesis}) by first specializing $X$ to $S^{[2]}$ where $S\subset \PP^3$ is a quartic surface, so that $\Fix(\tau)$ specializes to the surface of bitangent lines to $S$, and then by invoking (non trivial) results of Welters~\cite{Welters:thesis}.  

On the other hand, there are several explicit constructions of HK's $X$ of  $\mathrm{K3}^{[n]}$-type with an antisymplectic involution $\tau$ such that the fixed lattice $H^2(\tau)_{+}$ has rank $2$ (and thus deform in codimension~2 in moduli). One source of examples are moduli spaces of stable sheaves (or more generally, Bridgeland stable objects) on a K3 surface $S$ which is a double cover $S\to\PP^2$: the covering involution of $S$ induces an involution of    the moduli space, provided the Chern character of the sheaves and the stability condition are invariant under the involution of $S$.
Another source of examples are Lagrangian fibrations $X\to\PP^n$ with a fiberwise involution acting as $(-1)$ on $H^1$ of a smooth (abelian) fiber.
In several such examples (covering all $(n,d)$ as above), by making a suitable ``very general'' assumption, we may suppose that $\NS(X)$ has rank $2$, that it equals 
the fixed lattice $H^2(\tau)_{+}$, and that it contains a big class $\lambda$ of square $2$. The class $\lambda$ is not ample. Following~\cite{BM:walls} there is an explicit sequence of flops $X\dra X^{+}$ such that the class $\lambda^{+}\in\NS(X^{+})$ corresponding to $\lambda$ is nef. A divisorial contraction $X^{+}\to X_0$ then produces a singular holomorphic symplectic variety $X_0$ with a Cartier divisor class $\lambda_0$ pulling back to 
$\lambda^{+}$, and an involution $\tau_0$ which corresponds to $\tau$ via the birational map $X\dra X_0$.  We prove that a general smoothing of $(X_0,\lambda_0)$ is a polarized $(X_t,\lambda_t)$ with $q_{X_t}(\lambda_t)=2$ and $\divisore(\lambda_t)=d$, \emph{and} the specialization of the involution $\tau_t$ for $t\to 0$ is equal to $\tau_0$. It follows that the components of $\Fix(\tau_t)$ can be described starting from a description of the components of $\Fix(\tau)$. 

A baby example of the specialization $X_t\to X_0$ is provided by a family of  double covers  $X_t\to \PP^2$  ramified over a sextic curve $\Gamma_t\subset\PP^2$ which is smooth for $t\not=0$ and has a node $p$ for $t=0$ (and no other singularity). The central surface $X_0$ has an ordinary double point. After a base change we  get a smooth filling  
$\wt{\cX}\to T$. The central fiber $\wt{X}_0=X^{+}$ is the minimal desingularization of $ X_0$ and is a double cover $X^{+}\to \Bl_p(\PP^2)\cong\FF_1$. Since we are in dimension $2$ there are no flips, hence $\tau$ is the covering involution of the double cover $X^{+}\to \FF_1$.
Clearly $\Fix(\tau)=\wt{\Gamma}_0$ is the desingularization of the branch curve $\Gamma_0$ of $X_0\to\PP^2$, and $\Fix(\tau_0)$ is  obtained from $\wt{\Gamma}_0$ by joining two  points. This is the simplest example of our specialization in divisibility~1.

The simplest example of our specialization in divisibility~2 is provided by the LLSvS eightfold associated to a cubic fourfold $Y_0\subset\PP^5$ with an ordinary node and not containing a plane.
We sketch the picture and we refer to~\cite{Lehn:singcub} for more details (the results most relevant to us had been developed by Starr in unpublished work).
The construction of the LLSvS variety associated to a smooth cubic not containing a plane works also for $Y_0$ and it produces a singular HK 8-dimensional variety $Z_0$.
The variety parametrizing lines in $Y_0$ containing the node is a (smooth) $K3$ surface $S\subset\PP^4$, i.e., of genus $4$.
As shown in~\cite[Section 6]{Lehn:singcub}, $Z_0$ is birational to the (HK) moduli space $M$ of sheaves $\iota_{C,*}\xi$ where $\iota_C\colon C\hra S$ is the inclusion of a (genus $4$) hyperplane section of $S$ and $\xi$ is a torsion free sheaf on $C$ of degree~$0$ (in~\cite{Lehn:singcub} the degree is $6$, i.e., $\deg \omega_C$; ours is simply a different normalization).
Notice that there is a  Lagrangian fibration $M\to(\PP^4)^{\vee}$ defined by associating to $\iota_{C,*}\xi$ the curve $C$.
The antisymplectic involution on $Z_0$ defines a regular antisymplectic involution of $M$ (a priori it is only birational), namely the fiberwise involution $\tau$ mapping $\iota_{C,*}\xi$ to $\iota_{C,*}\xi^{\vee}$ if $\xi$ is locally free (if $Y_0$ is very general with a node there is a single antisymplectic  birational involution of $M$).
The upshot is that we are reduced to studying the involution $\tau$ on $M$  and the birational map between $M$ and $Z_0$.
We notice that in this case the birational map is the composition of two flops and a divisorial contraction, see Example~\ref{ex:Div2Flopsg48}.
As the dimension increases the birational map becomes more and more complex.

\subsection{Higher divisibility}

There are other examples of antisymplectic involutions deforming in codimension one.
For instance, let $X$ be a projective HK manifold of $\mathrm{K3}^{[n]}$-type and  $\lambda$  an ample class on $X$ such that $q_X(\lambda)=2(n-1)$ and $\div(\lambda)=n-1$.
Then again by \cite[Section 9.1.1]{Markman:Survey} there exists a unique antisymplectic involution $\tau$ of $X$ such that $H^2(\tau)_{+}=\ZZ\lambda$.
However, to study these involutions it seems likely one should choose different degenerations of the involution than those we consider here.

\subsection*{Acknowledgements}
The paper benefited from many useful discussions with the following people which we gratefully acknowledge: Enrico Arbarello, Arend Bayer, Marcello Bernardara, Olivier Debarre, Tommaso de Fernex, Enrico Fatighenti, Alexander Kuznetsov, Giovanni Mongardi, Alexander Perry, Paolo Stellari.
We also thank the referee for useful comments and suggestions.
Parts of the paper were written while the authors were visiting several institutions. In particular, we would like to thank Coll\`ege de France, \'Ecole Normale Sup\'erieure, Institut des Hautes \'Etudes Scientifiques, Universit\'e Paris-Saclay, CIRM in Luminy and MSRI in Berkeley for the excellent working conditions.


\section{Surgery on antisymplectic involutions}\label{sec:surgery}

This is a general section on HK manifolds: we use results by Markman and Namikawa to allow for deformations/smoothings of pairs of HK manifolds (or holomorphic symplectic varieties) together with an involution.

Let $X$ be a compact smooth irreducible hyperk\"ahler manifold (in short, \emph{HK manifold}), meaning that $X$ is a simply-connected compact K\"ahler manifold such that $H^0(X,\Omega^2_X)$ is one-dimensional, spanned by an everywhere non-degenerate holomorphic $2$-form. 
Let $\tau\in\Aut(X)$ be an involution satisfying the following requirements:
\begin{enumerate}[(a)]
\item\label{enum:surgery1} The eigenspace decomposition of the action of $\tau$ on $H^2(X;\QQ)$ is
\begin{equation}\label{autodeco}
H^2(\tau)_{+}=\la \lambda,\delta\ra,\quad H^2(\tau)_{-}=\la \lambda,\delta\ra^{\bot},
\end{equation}
where $\lambda,\delta\in \NS(X)$ are  linearly independent.
\item\label{enum:surgery2} There is a divisorial contraction $\varphi\colon X\to \overline{X}$ of an irreducible divisor $\Delta$ representing the class $\delta$.\footnote{By divisorial contraction we mean that $\overline{X}$ is normal and the relative N\'eron-Severi has rank~1.}
\item\label{enum:surgery3} We have $\lambda=\varphi^{*}(\overline{\lambda})$ for the class $\overline{\lambda}\in H^2(\overline{X})$ of a Cartier divisor on $\overline{X}$.
\item\label{enum:surgery5} The involution $\tau$ descends to an involution $\overline{\tau}\in\Aut(\overline{X})$.
\end{enumerate}

By~\cite[Theorem 2.2]{Namikawa:Def}, it follows that $\Def(\overline{X})$ is smooth and there is a natural branched covering $m\colon \Def(X)\to\Def(\overline{X})$ with branch divisor $B(\overline{X})\subset \Def(\overline{X})$.
By~\cite[Section 3]{markman:modular}, since $\Delta$ is irreducible, the general fiber of $\Delta\to\varphi(\Delta)$ is either a $\PP^1$ or two copies of $\P^1$ intersecting transversally; from now on we assume that we are in the first case.
Hence, by~\cite[Theorem~1.4]{markman:modular} the map $m$ is a double cover.
Moreover 
\begin{equation*}
m^{-1}(B(\overline{X}))=\Def(X,\delta),
\end{equation*}
where the right hand side parametrizes deformations of $X$ which keep $\delta$ a Hodge class.
In fact, by \cite[Lemma 5.1]{Markman:PrimeExceptional} the smooth codimension one subvariety $\Def(X,\delta)$ parametrizes deformations of the pair $(X,\Delta)$.
In particular, $X_u$ and $\overline{X}_{m(u)}$ ($X_u$ and $\overline{X}_{t}$ are the varieties parametrized by $u\in\Def(X)$ and $t\in\Def(\overline{X})$ respectively) are isomorphic for $u$ outside $m^{-1}(B(\overline{X}))$, since $\phi$ is a divisorial contraction.

Note that the involution $\overline{\tau}$ induces an involution on $\Def(\overline{X})$ and on the universal family.  Let  $\Def(\overline{X},\overline{\tau})$ denote the fixed locus of the action on $\Def(\overline{X})$.  There is an induced relative involution on the restriction of the universal family to $\Def(\overline{X},\overline{\tau})$ and so we refer to
$\Def(\overline{X},\overline{\tau})\subset\Def(\overline{X})$ as the locus parametrizing deformations of the pair $(\overline{X},\overline{\tau})$.

\begin{prop}\label{uscire}
Keep notation and hypotheses as above. Then $\Def(\overline{X},\overline{\tau})$ is smooth of codimension $1$ in $\Def(\overline{X})$, and is not contained in  $B(\overline{X})$. 
\end{prop}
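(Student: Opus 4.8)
The plan is to realize $\Def(\overline{X},\overline{\tau})$ as the fixed locus of the involution induced by $\overline{\tau}$ on the smooth space $\Def(\overline{X})$, and then to reduce both assertions to a computation of the eigenvalues of this involution on the Zariski tangent space $T_0\Def(\overline{X})$. Since $\overline{X}$ is a symplectic variety we have $H^0(\overline{X},T_{\overline{X}})=0$, so its Kuranishi family is universal and $\overline{\tau}$ acts functorially on $\Def(\overline{X})$; the deformations of the pair $(\overline{X},\overline{\tau})$ are exactly the fixed points of this action. By Namikawa the space $\Def(\overline{X})$ is smooth, and in characteristic $0$ the fixed locus of a finite-order automorphism of a smooth space is smooth with tangent space the invariants. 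Thus $\Def(\overline{X},\overline{\tau})$ is automatically smooth, and it remains to show (i) the $(-1)$-eigenspace of $\overline{\tau}$ on $T_0\Def(\overline{X})$ is one-dimensional, giving codimension $1$, and (ii) the conormal direction to $B(\overline{X})$ lies in the $(+1)$-eigenspace, so that $\Def(\overline{X},\overline{\tau})$ meets $B(\overline{X})$ transversally and is not contained in it.

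To carry this out I would pass through the double cover $m\colon\Def(X)\to\Def(\overline{X})$. Under the local period map $T_0\Def(X)$ is identified with $H^{1,1}(X)$, and by Markman's description of the monodromy the covering involution of $m$ acts there as the reflection $R_\delta$ in the prime exceptional class $\delta$ (that is, $+1$ on $\delta^{\bot}\cap H^{1,1}(X)$ and $-1$ on $\CC\delta$), with fixed locus $\Def(X,\delta)$. Choosing eigencoordinates $y$ along $\CC\delta$ and $x_1,\dots,x_k$ along $\delta^{\bot}\cap H^{1,1}(X)$, the quotient $\Def(\overline{X})$ acquires coordinates $x_1,\dots,x_k,w$ with $w=y^{2}$ and $B(\overline{X})=\{w=0\}$. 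The crucial input is the sign coming from antisymplecticity: since $\overline{\tau}$ lifts to $\tau$ and $\tau$ acts by $-1$ on a symplectic form, the geometric action of $\tau$ on $T_0\Def(X)\cong H^{1,1}(X)$ is $-\,\tau|_{H^{1,1}(X)}$. Because $\lambda,\delta\in H^2(\tau)_{+}$, this shows $\tau$ acts by $-1$ on the $\delta$-direction $y$ and on the one-dimensional subspace $\la\lambda,\delta\ra\cap\delta^{\bot}$ of $\delta^{\bot}\cap H^{1,1}(X)$, and by $+1$ on its complement $\la\lambda,\delta\ra^{\bot}\cap H^{1,1}(X)$.

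The conclusion then follows by descending to the quotient. From $\tau(y)=-y$ we get $\overline{\tau}(w)=\overline{\tau}(y^{2})=w$, so $w$ is $\overline{\tau}$-invariant and the conormal direction $\partial_w$ to $B(\overline{X})$ lies in the $(+1)$-eigenspace; this gives (ii). On the image of $dm$, which is $T_0 B(\overline{X})\cong\delta^{\bot}\cap H^{1,1}(X)$, the $(-1)$-eigenspace of $\overline{\tau}$ is precisely the line $\la\lambda,\delta\ra\cap\delta^{\bot}$, which is one-dimensional because $\la\lambda,\delta\ra$ is a plane not contained in $\delta^{\bot}$ (as $q_X(\delta,\delta)\neq 0$). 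Hence the $(-1)$-eigenspace of $\overline{\tau}$ on all of $T_0\Def(\overline{X})$ is one-dimensional, so $\Def(\overline{X},\overline{\tau})$ has codimension $1$, proving (i). I expect the main obstacle to be the careful bookkeeping of two distinct signs — the antisymplectic sign in the period-map action of $\tau$ versus the absence of such a sign for the reflection $R_\delta$, whose period vector is fixed rather than negated — together with the computation of the tangent space of the quotient along the ramification, where the branch-normal coordinate $w=y^{2}$ is the source of the direction transverse to $B(\overline{X})$.
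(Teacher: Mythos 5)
Your proposal is correct and follows essentially the same route as the paper's proof: both realize $\Def(\overline{X},\overline{\tau})$ as the fixed locus of the induced involution on the smooth germ $\Def(\overline{X})$, pass through Markman's double cover $m$, use the antisymplectic sign to identify the action on $T_0\Def(X)\cong H^{1,1}(X)$ with $-H^2(\tau)|_{H^{1,1}}$ (so that the $(-1)$-eigenspace is $\la \lambda,\delta\ra$ and $\Def(X,\tau)$ has codimension $2$), and observe that the square of the anti-invariant ramification equation descends to a $\overline{\tau}$-invariant local equation of $B(\overline{X})$, forcing transversality. The only cosmetic difference is that the paper obtains codimension $1$ by sandwiching (at least $1$ because a general deformation admits no antisymplectic involution, at most $1$ from $m(\Def(X,\tau))\subset \Def(\overline{X},\overline{\tau})\cap B(\overline{X})$ together with non-containment in $B(\overline{X})$), whereas you read it off directly as the dimension of the $(-1)$-eigenspace of $d\overline{\tau}$ on $T_0\Def(\overline{X})$.
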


\begin{proof} 
Since $\Def(\overline{X})$ is smooth it follows that $\Def(\overline{X},\overline{\tau})$ is smooth as well.

If $\Def(\overline{X},\overline{\tau})=\Def(\overline{X})$, we would have an antisymplectic involution on a general deformation of $X$.
That is impossible, because such an involution would act as multiplication by $(-1)$ on the whole of $H^2(X)$.
Hence $\Def(\overline{X},\overline{\tau})$ has codimension at least $1$ in $\Def(\overline{X})$.

We prove the proposition by comparing $\Def(\overline{X})$ and $\Def(X)$.
The eigenspaces of $H^2(\tau)$ are given by~\eqref{autodeco}.
Since $\tau$ is antisymplectic it follows that the fixed locus of the action of $\tau$ on $H^1(X,\Theta_{X})\cong H^1(X,\Omega_{X})$ is equal to $\la \lambda,\delta\ra^{\bot}$.
Hence $\Def(X,\tau)\subset \Def(X)$ is a (smooth) subspace of codimension $2$.
Since
\begin{equation*}
m(\Def(X,\tau))\subset \Def(\overline{X},\overline{\tau})\cap B(\overline{X}),
\end{equation*}
we get that $\Def(\overline{X},\overline{\tau})\cap B(\overline{X})$ has codimension at most $1$ in $B(\overline{X})$.

Hence in order to finish the proof it suffices to show  that $\Def(\overline{X},\overline{\tau})$ is not contained  in $B(\overline{X})$.
There is a local equation $g$ of $\Def(X,\delta)$ which is a $(-1)$-eigenvalue for the action of $\tau$ on the germ of $\Def(X)$.
Since $m$ is a double cover with ramification divisor $\Def(X,\delta)$, it follows that $g^2$ is the pull-back of an element $h$ of the  germ of $\Def(\overline{X})$ generating the ideal of $B(\overline{X})$.
By compatibility of $\tau$ and $\overline{\tau}$, $h$ is a $(+1)$-eigenvalue for the action of $\overline{\tau}$.
This proves that $\Def(\overline{X},\overline{\tau})$  is not contained in $B(\overline{X})$, and hence it has codimension~1 in $\Def(\overline{X})$.
\end{proof}

For $t\in \Def(\overline{X},\overline{\tau})$ we let $\tau_t\in\Aut(\overline{X}_t)$ be the antisymplectic involution extending $\overline{\tau}$.
If $t \in \Def(\overline{X}) \setminus B(\overline{X})$, then the pair $(\overline{X}_t, \tau_t)$ is a smoothing of $(\overline{X},\overline{\tau})$.

\begin{coro}\label{unaclasse}
There is a line bundle $\cL$ on the family $\cY\to  \Def(\overline{X},\overline{\tau})$ with the following properties:
\begin{itemize}
\item\label{enum:unaclasse1} $\mathrm{c}_1(\cL_0)=\overline{\lambda}$.
\item\label{enum:unaclasse2} If $t\in \Def(\overline{X},\overline{\tau})$, then $\cL_t$ is ample and  $H^2(\tau_t)_{+}=\QQ\cdot  \mathrm{c}_1(\cL_t)$.
\item\label{enum:unaclasse3} For $t\notin B(\overline{X})$, $q_{X_t}(\mathrm{c}_1(\cL_t))=q_X(\lambda)$ and $\divisore(\mathrm{c}_1(\cL_t))=\divisore(\lambda)$.
\end{itemize}
\end{coro}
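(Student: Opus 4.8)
The plan is to construct $\cL$ by showing that $\overline\lambda$ stays of type $(1,1)$ along $\Def(\overline{X},\overline\tau)$ and then invoking the relative Lefschetz theorem, and afterwards to read off the three properties from deformation‑invariance together with the explicit behaviour at the central fibre. First I would produce $\cL$. Over $\Def(\overline{X},\overline\tau)$ the involution extends to a relative involution $\tau_t$ which is antisymplectic on every fibre, so the period $[\sigma_t]$ lies in $H^2(\tau_t)_-\otimes\CC$; consequently the whole invariant subspace $H^2(\tau_t)_+$ is orthogonal to $\sigma_t,\overline{\sigma}_t$, hence of type $(1,1)$. Let $\overline\lambda_t$ be the flat continuation of $\overline\lambda$ inside the invariant part of the local system $R^2\pi_*\ZZ$: it is $\tau_t$‑invariant and therefore of type $(1,1)$ at every $t$. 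Since $H^1(\cO_{\overline{X}_t})=0$ the $(0,2)$‑obstruction in $R^2\pi_*\cO$ vanishes, so $\overline\lambda_t$ lifts to a relative line bundle $\cL$ with $c_1(\cL_0)=\overline\lambda$; this is property~(1). The one technical point is that the central fibre $\overline{X}_0=\overline{X}$ is singular, so $R^2\pi_*\ZZ$ is not literally a local system across $B(\overline{X})$; I would circumvent this exactly as in the proof of Proposition~\ref{uscire}, transporting the statement to $\Def(X)$ through the double cover $m$ and Namikawa's comparison, where the fibres are either smooth or carry the fixed divisor $\Delta$.

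Next, ampleness of $\cL_t$ is an open condition and $\cL_0=\overline\lambda$ is ample on $\overline{X}$, so after shrinking the base $\cL_t$ is ample throughout; this gives the first half of property~(2). For the eigenspace claim I would argue that the rank of the topological eigenspace $H^2(\tau_t)_+$ is locally constant on the smooth locus $\Def(\overline{X},\overline\tau)\setminus B(\overline{X})$, and compute it by a dimension count: local Torelli for the pair $(\overline{X}_t,\tau_t)$ identifies $\Def(\overline{X},\overline\tau)$ with an open subset of the period domain in $\PP(H^2(\tau_t)_-\otimes\CC)$, whence $\dim\Def(\overline{X},\overline\tau)=\rank H^2(\tau_t)_--2$; since $\dim\Def(\overline{X})=\rank H^2(X)-2$ and the former has codimension one by Proposition~\ref{uscire}, this forces $\rank H^2(\tau_t)_+=1$. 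As $c_1(\cL_t)$ is a nonzero invariant class it must span, giving $H^2(\tau_t)_+=\QQ\,c_1(\cL_t)$. At $t=0$ this is checked directly: $\varphi^*$ identifies $H^2(\overline{X};\QQ)$ with $\delta^{\bot}\subset H^2(X;\QQ)$, and since $\lambda=\varphi^*\overline\lambda\bot\delta$ while $q_X(\delta,\delta)\ne0$, one gets $H^2(\overline\tau)_+=\la\lambda,\delta\ra\cap\delta^{\bot}=\QQ\overline\lambda$.

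Finally, for property~(3) and $t\notin B(\overline{X})$ the fibre $\overline{X}_t$ is a smooth HK isomorphic to a fibre $X_s$ over $\Def(X)$, for $s\in m^{-1}(t)$. Parallel transport inside the contractible base $\Def(X)$ is a lattice isometry $H^2(X;\ZZ)\cong H^2(X_s;\ZZ)$, so it preserves both $q_X$ and the divisibility; thus it suffices to identify $c_1(\cL_t)$ with the transport $\lambda_s$ of $\lambda$. Over $N:=m^{-1}(\Def(\overline{X},\overline\tau))$ the pullback $\lambda=\varphi^*\overline\lambda$ stays Hodge and defines a flat section $\lambda_\bullet$, while $m^*c_1(\cL)$ defines another; both are flat and $\tau$‑invariant, so it is enough to match them at one point. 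I would do this at the ramification $\Def(X,\delta)=m^{-1}(B(\overline{X}))$: there the isomorphisms $X_s\cong\overline{X}_{m(s)}$ degenerate to the relative divisorial contraction $\varphi_s\colon X_s\to\overline{X}_{m(s)}$, so the limit of $m^*c_1(\cL)$ equals $\varphi_{s_0}^*\,c_1(\cL_{m(s_0)})=\varphi_{s_0}^*\overline\lambda=\lambda_{s_0}$, which is exactly the limit of $\lambda_\bullet$. Two flat sections with a common limit on the connected smooth locus coincide, giving $c_1(\cL_t)=\lambda_s$ and hence $q_{X_t}(c_1(\cL_t))=q_X(\lambda)$ and $\divisore(c_1(\cL_t))=\divisore(\lambda)$.

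The main obstacle is precisely this last identification across the branch divisor. The equality of BBF squares is automatic from deformation‑invariance of $q$, but divisibility is a finer integral invariant, and to control it one must see that the family of isomorphisms away from $B(\overline{X})$ limits to the contraction $\varphi$ compatibly with parallel transport—equivalently, that $\lambda_\bullet$ and $m^*c_1(\cL)$ agree as flat \emph{integral} sections rather than merely having equal square. Making this limiting argument rigorous through the singular central fibre is the crux of the proof.
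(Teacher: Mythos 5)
Your proposal is correct and follows essentially the same route as the paper's (much terser) proof: antisymplecticity forces $H^2(\tau_t)_{+}$ to be algebraic, the codimension-one statement of Proposition~\ref{uscire} forces it to have rank one, and deformation invariance of the integral lattice data gives the square and divisibility. The only cosmetic difference is that you obtain ampleness of $\cL_t$ by openness from the central fibre, whereas the paper averages a K\"ahler class to produce an ample class in the one-dimensional invariant subspace; both work, and your extra care about identifying $\mathrm{c}_1(\cL_t)$ with the parallel transport of $\lambda$ across $B(\overline{X})$ only makes explicit what the paper leaves implicit.
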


\begin{proof}
In general, let $X$ be a (smooth) HK manifold and let $\psi\in\Aut(X)$ be an antisymplectic involution.
Then $H^2(\psi)_{+}$ is contained in $H^{1,1}(X)$ and is spanned by its intersection with $H^2(X;\QQ)$.
Since it contains K\"ahler classes (if $\omega$ is a K\"ahler class on $X$ then $\omega+\psi^{*}(\omega)$ is a K\"ahler class in $H^2(\psi)_{+}$), it follows that $H^2(\psi)_{+}$ contains ample classes.
If moreover $\Def(X,\psi)$ has codimension $1$ in $\Def(X)$, then $H^2(\psi)_{+}$ has dimension $1$ and hence it is generated by an ample primitive class.
Applying this to $\tau_t$, for $t\in \Def(\overline{X},\overline{\tau})\setminus B(\overline{X})$, we get the corollary.
\end{proof}

\begin{rema}\label{rmk:FlopInvolution} Notice that the same proof gives similar results also in the case of flopping contractions and compatible involutions.
More precisely, we still start with a HK manifold $X$, but we now assume that $\tau$ is only a birational involution. We further assume that:
\begin{enumerate}[(a)]
\item $H^2(\tau)_+=\la \lambda \ra$, $H^2(\tau)_-=\la \lambda \ra^{\bot}$.
\item There is a birational contraction $\phi\colon X \to \overline{X}$ onto a normal variety and a class $\overline{\lambda}\in H^2(\overline{X})$ of an ample Cartier divisor such that $\lambda=\phi^*(\overline{\lambda})$.
\item The rational involution $\tau$ descends to a regular involution $\overline{\tau}\in \Aut(\overline{X})$.
\end{enumerate}
In this case, again by using~\cite[Theorem~1.4]{markman:modular}, the morphism $m$ gives an isomorphism $m\colon\Def(X)\xrightarrow{\cong}\Def(\overline{X})$.
Then we can deduce directly that $\Def(\overline{X},\overline{\tau})$ is smooth of codimension~1 in $\Def(\overline{X})$, together with the existence of a line bundle $\cL$ on the universal family with the same properties as in Corollary~\ref{unaclasse}.
\end{rema}

\begin{exam}\label{ex:DoubleCoverK3}
Let $\pi\colon S\to\PP^2$ be the double cover ramified over a smooth sextic curve and let $\iota\in\Aut(S)$ be the covering involution of $\pi$.
Then $S$ is a K3 surface and $\iota$ is antisymplectic.
Let $X:=S^{[n]}$ and let $\tau\in\Aut(X)$ be the involution induced by $\iota$, i.e., $\tau([Z]):=[\iota(Z)]$ for $[Z] \in S^{[n]}$.
Then $(X,\tau)$ satisfies the requirements \ref{enum:surgery1}--\ref{enum:surgery5} above.
In fact, let $\overline{X}:=S^{(n)}$ (the $n$-th symmetric power of $S$) and let $\varphi\colon X\to\overline{X}$ be the Hilbert-Chow map.
Then $\varphi$ is the contraction of the divisor $\Delta\subset S^{[n]}$ parametrizing non-reduced subschemes.
Moreover $\overline{\lambda}=h^{(n)}$ where $h:=\pi^{*} \mathrm{c}_1(\cO_{\PP^2}(1))$ and $h^{(n)}\in H^2(S^{(n)};\ZZ)$ is the ``symmetrization'' of $h$.
The involution $\overline{\tau}\in\Aut(\overline{X})$ is defined by $\overline{\tau}(\sum_j m_j x_j)=\sum_j m_j\iota(x_j)$.
By Proposition~\ref{uscire} and Corollary~\ref{unaclasse} the involution $\overline{\tau}$ deforms to involutions $\tau_t\in\Aut(X_t)$ where $X_t$ is smooth and $H^2(\tau_t)_{+}=\QQ\cdot \mathrm{c}_1(\cL_t)$ where $\mathrm{c}_1(\cL_t)$ is a deformation of $h^{(n)}$ (and hence $q_{X_t}(\cL_t)=2$).
Thus $\Fix(\overline{\tau})$ is a specialization of $\Fix(\tau_t)$, and we may hope to determine discrete invariants of $\Fix(\tau_t)$ for general $t$ by examining of $\Fix(\overline{\tau})$.
The latter is a union of irreducible components each isomorphic to the product of two factors, a symmetric power of $\PP^2$ and a symmetric power of the branch curve of $\pi$.
From this description it is not clear that $\Fix(\tau_t)$ is irreducible for general $t$, nor it is easy to determine its birational invariants.
The moral is that one should apply the results in this section to  carefully chosen examples.
\end{exam}


\section{Birational models of Lagrangian fibrations}\label{sec:birational}

In this section we describe the nef and movable cone of certain Lagrangian fibrations with involution in order to study degenerations of involutions induced by ample classes of degree $2$ and to apply the surgery criterion of Section~\ref{sec:surgery}. We introduce the relevant examples in Section~\ref{subsec:examples} and we present a very short review of Bridgeland's theory of stability conditions in Section~\ref{subsec:Bridgeland}.  The two main results are in Section~\ref{subsec:div1}, Lemma~\ref{lem:InvolutionDiv1} (for divisibility~1) and Section~\ref{subsec:div2}, Lemma~\ref{lem:InvolutionDiv2} (for divisibility~2).

Throughout this section $S$ is a smooth projective K3 surface.
We let
\[
\D\colon\Db(S)^{\mathrm{op}}\xlongrightarrow{\cong}\Db(S)\qquad \D(\blank):=\mathbf{R}\cH om_S(\blank,\cO_S)[1]
\]
be the anti-autoequivalence  given by the derived dual composed with the shift functor $[1]$.

\subsection{The examples}\label{subsec:examples}

Let $h$ be the class of an ample divisor $H$ on $S$ and let  $h^2=2g-2$.
We assume that all the curves in the linear system $|\cO_S(H)|$ are integral; for example, this is the case if $\mathrm{NS}(S)=\Z \cdot h$.
We consider the moduli spaces
\[
M_h(0,h,0) \qquad \text{ and }\qquad M_h(0,h,1-g) 
\]
of $h$-semistable pure sheaves on $S$ with Mukai vector $v$ equal to $(0,h,0)$ and $(0,h,1-g)$ respectively. 
An element of $M_h(0,h,0)$ is the isomorphism classe of a sheaf $i_{C,*}(\xi)$, where $i_C\colon C\hra S$ is the inclusion of a curve $C\in |\cO_S(H)|$ and $\xi$ is a torsion free  sheaf of degree $g-1$ on $C$. Similarly, an  element of $M_h(0,h,1-g)$ is the isomorphism classes of a  sheaf $i_{C,*}(\xi)$ where $\xi$ is a torsion free sheaf of degree $0$ on $C$. 
In both cases we have a fibration
\begin{equation}\label{beaufibr}
\begin{matrix}
 M_h(v) & \overset{\pi}{\lra} &  |\cO_S(H)|\cong\PP^g \\
 [F] & \mapsto & \suppdet(F),
\end{matrix}
\end{equation}
where $\suppdet(F)$ is the Fitting support (see~\cite{LePotier:Fitting}).
The fiber of $\pi$ over a smooth curve $C$ is the Jacobian $\Pic^{g-1}(C)$ and $\Pic^{0}(C)$, depending on whether $v=(0,h,0)$ or $v=(0,h,1-g)$. 
We have the involutions induced by the anti-autoequivalences $\D$ and $\cO_S(-H)\otimes\D$ respectively:
\begin{equation}\label{involagr}
\begin{matrix}
 M_h(0,h,0) & \overset{\tau}{\lra} & M_h(0,h,0)  \\
 [F] & \mapsto & [\mathcal{E}xt^1(F,\cO_S)]
\end{matrix}
\qquad\begin{matrix}
 M_h(0,h,1-g) & \overset{\tau}{\lra} & M_h(0,h,1-g)  \\
 [F] & \mapsto & [\mathcal{E}xt^1(F,\cO_S(-H))]
\end{matrix}
\end{equation}
(see~\cite[Lemmas 3.7 \& 3.8]{ASF:Prym}).
Equivalently, when $C$ is a smooth curve, if $[i_{C,*}(\xi)]\in M_h(0,h,0) $ then $\tau([i_{C,*}(\xi)])=i_{C,*}(\omega_C\otimes \xi^{\vee})$, where we set $\xi^{\vee}:=\cH om(\xi,\cO_C)$, while if $[i_{C,*}(\xi)]\in M_h(0,h,1-g)$ then $\tau([i_{C,*}(\xi)])=i_{C,*}(\xi^{\vee})$.

Let $v$ denote one of the two Mukai vectors considered above.
Under our assumptions every sheaf parametrized by $M_h(v)$ is stable, hence $M_h(v)$ is smooth.
In fact $M_h(v)$ is a HK manifold of $\mathrm{K3}^{[g]}$-type  (see~\cite{Muk:Sympl,Huy:bir,Kieran:weight2,Yos:moduli}) and the fibration in~\eqref{beaufibr} is Lagrangian. Let
\[
\vartheta\colon v^{\bot}\xlongrightarrow{\cong} H^2(M_h(v);\Z)
\]
be Mukai's isomorphism, as defined in \cite[(1.6)]{Yos:moduli}.
We recall that $\vartheta$ is an isomorphism of lattices, where $v^{\bot}$ is equipped with the restriction of Mukai's pairing on $H^*(S;\ZZ)$ and of course $H^2(M_h(v);\ZZ)$ is equipped with the Beauville-Bogomolov-Fujiki symmetric bilinear form.
In addition  $\vartheta$ is an isomorphism of Hodge structures ($v^{\bot}$ is a sub-Hodge structure of $H^{*}(S;\Z)$ with Mukai's Hodge structure). Let $H_{\mathrm{alg}}^*(S;\ZZ)$ denote the group of algebraic classes, or equivalently those of type $(1,1)$, in $H^*(S;\ZZ)$. Then the fact that $\vartheta$ is an isomorphism of Hodge structures implies in particular that $\vartheta$ induces
 an isometry between classes in $v^{\bot}_{\mathrm{alg}}:=v^{\bot} \cap H_{\mathrm{alg}}^*(S;\Z)$ and the N\'eron-Severi group $\NS(M_h(v))$.


The result below describes the action of $\tau$ on $H^2(M_h(v);\ZZ)$ in terms of $\vartheta$.

\begin{prop}\label{azione}
Let $v$ be one of the two Mukai vectors considered above and let $\tau\in\Aut(M_h(v))$ be the antisymplectic involution in~\eqref{involagr}.
Suppose that all the curves in the linear system $|\cO_S(H)|$ are integral.
Then  the $H^2(\tau)$-invariant and anti-invariant sublattices in $H^2(M_h(v):\ZZ)$ are given by
\begin{equation}\label{aziopiu}
H^2(\tau)_{+}=
\begin{cases}
\vartheta(\ZZ(1,0,0)+\ZZ(0,0,1)) & \text{if $v=(0,h,0)$,} \\
\vartheta(\ZZ(2,-h,0)+\ZZ(0,0,1)) & \text{if $v=(0,h,1-g)$,}
\end{cases}
\end{equation}
and
\begin{equation}\label{aziomeno}
H^2(\tau)_{-}=\vartheta(H^{2}(S;\ZZ)_{pr}),
\end{equation}
where $H^{2}(S;\ZZ)_{pr}\subset H^2(S;\ZZ)$ is the orthogonal complement of $h$.
\end{prop}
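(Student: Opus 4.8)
The plan is to compute the action of $\tau$ on $H^2(M_h(v);\ZZ)$ by tracking how the anti-autoequivalences $\D$ and $\cO_S(-H)\otimes\D$ act on cohomology via the Mukai isomorphism $\vartheta$. The key observation is that $\tau$ is induced by a (contravariant) autoequivalence of $\Db(S)$, and such functors act on the Mukai lattice $H^*(S;\ZZ)$ by a known formula compatible with $\vartheta$. So the strategy has three movements: first identify the induced isometry $\tau^*$ on $H^*(S;\ZZ)$, then restrict it to $v^\bot$, and finally read off the eigenspaces.

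\textbf{Step 1: the action on the Mukai lattice.} First I would determine how derived dualization $\D$ acts on $H^*(S;\ZZ)$. For a class $w=(r,c,s)\in H^0\oplus H^2\oplus H^4$, derived duality $\mathbf{R}\cH om(-,\cO_S)$ sends the Mukai vector $(r,c,s)$ to $(r,-c,s)$ (it negates the $H^2$-component, fixing $H^0$ and $H^4$), and the shift $[1]$ multiplies the whole Mukai vector by $(-1)$. Hence $\D$ acts as $(r,c,s)\mapsto (-r,c,-s)$ on the Mukai lattice. For the second functor, tensoring by $\cO_S(-H)$ acts by the Chern-character multiplication $w\mapsto w\cdot\operatorname{ch}(\cO_S(-H))\sqrt{\td_S}$, which on Mukai vectors is $(r,c,s)\mapsto (r,\,c-rh,\,s-c\cdot h + r\tfrac{h^2}{2})$; composing with $\D$ gives the action for $v=(0,h,1-g)$. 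I would verify that each resulting map is a Hodge isometry of $(H^*(S;\ZZ),\text{Mukai pairing})$ fixing $v$ (a necessary consistency check, since $\tau$ is an automorphism of $M_h(v)$).

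\textbf{Step 2: restriction and eigenspaces.} Next I would restrict the isometry to $v^\bot$ and diagonalize. For $v=(0,h,0)$: the map $(r,c,s)\mapsto(-r,c,-s)$ has $(+1)$-eigenspace $\{(0,c,0)\}$ and $(-1)$-eigenspace $\{(r,0,s)\}$ on all of $H^*$. Intersecting with $v^\bot=(0,h,0)^\bot=\{(r,c,s): c\cdot h=0\}$ and then recalling that $\vartheta$ carries the computation into $H^2(M_h(v))$, the $(+1)$-part inside $v^\bot$ is spanned by $(1,0,0)$ and $(0,0,1)$ (both orthogonal to $(0,h,0)$), matching~\eqref{aziopiu}; the $(-1)$-part is $\{(0,c,0): c\cdot h=0\}=H^2(S;\ZZ)_{pr}$, matching~\eqref{aziomeno}. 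For $v=(0,h,1-g)$ I would do the analogous diagonalization of the twisted map and check that $(2,-h,0)$ and $(0,0,1)$ span the fixed part inside $v^\bot$, and that the anti-invariant part is again the primitive cohomology $H^2(S;\ZZ)_{pr}$ embedded as $\{(0,c,0):c\cdot h=0\}$.

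\textbf{The main obstacle} will be justifying rigorously that the \emph{geometric} involution $\tau$ on the moduli space induces on $H^2(M_h(v);\ZZ)$ exactly the lattice isometry predicted by the functorial action on the Mukai lattice — i.e., that $\vartheta$ intertwines $\tau^*$ with the Mukai-lattice map computed in Step 1. This is the compatibility of $\vartheta$ with derived (anti-)autoequivalences. I would either cite the cited results \cite[Lemmas 3.7 \& 3.8]{ASF:Prym} together with the functoriality of Mukai's isomorphism (as in \cite{Yos:moduli}), or argue directly: $\tau$ is realized by a Fourier–Mukai–type correspondence whose action on cohomology is computed by the induced map on Mukai vectors, and $\vartheta$ is defined precisely so as to be equivariant for such correspondences. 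A subtlety worth flagging is the sign ambiguity introduced by the shift $[1]$ and by whether one uses the covariant or contravariant convention; I would fix conventions so that $\tau^*$ genuinely squares to the identity and so that the listed generators in~\eqref{aziopiu} are the ones that appear, rather than their negatives or some index-2 overlattice.
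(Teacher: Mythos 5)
Your strategy (compute the cohomological action of the anti-autoequivalence on the Mukai lattice, restrict to $v^{\bot}$, diagonalize) is genuinely different from the paper's and can be made to work, but as written it contains a real inconsistency at exactly the point you flag as the ``main obstacle''. In Step 1 you correctly find that $\D$ acts on the Mukai lattice by $(r,c,s)\mapsto(-r,c,-s)$, whose $(+1)$-eigenspace is $\{(0,c,0)\}$ and whose $(-1)$-eigenspace is $\{(r,0,s)\}$; but in Step 2 you declare the $(+1)$-part of $H^2(\tau)$ to be spanned by $(1,0,0)$ and $(0,0,1)$, i.e., you assign the eigenspaces the \emph{opposite} way. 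The resolution is that the intertwining is $\tau^{*}\circ\vartheta=-\,\vartheta\circ\D^{H}$ on $v^{\bot}$ (for a contravariant functor the universal family on $S\times M$ gets dualized, which negates odd-degree components on \emph{both} factors and produces an extra $(-1)$ on the $H^2(M)$-component). This sign is not a convention one is free to fix so that ``the listed generators appear'': it is determined by the geometry and must be pinned down by an argument. The cheapest such argument is the one the paper uses implicitly: $f=\vartheta(0,0,-1)=\pi^{*}c_1(\cO_{\PP^g}(1))$ is manifestly $\tau$-invariant because $\tau$ is fiberwise, whereas $\D^{H}(0,0,-1)=(0,0,1)$; this forces the minus sign, after which your eigenspace computation (including the $(0,h,1-g)$ case and the integral saturation) goes through and agrees with \eqref{aziopiu}--\eqref{aziomeno}. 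Without some such input, your proof as written proves the statement with the two eigenspaces exchanged.

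For comparison, the paper avoids the Fourier--Mukai calculus entirely: it first notes $\tau$ is antisymplectic (it preserves the Lagrangian fibers and acts by $(-1)$ on $H^1$ of a smooth fiber), reduces by deformation to $\NS(S)=\ZZ h$, uses irreducibility of the transcendental Hodge structure $\vartheta(H^2(S;\ZZ)_{pr})$ to force it into the $(-1)$-eigenspace, and then exhibits two independent invariant classes ($f$ and $\alpha+\tau^{*}\alpha$ for $\alpha$ ample) to conclude that the $(+1)$-eigenspace is all of $\NS(M_h(v))$, which is then computed as a lattice. That route buys you freedom from sign bookkeeping and from justifying equivariance of $\vartheta$ under anti-equivalences (which is not stated in the standard references in the form you need); your route, once the sign is nailed down, buys an explicit integral formula for $H^2(\tau)$ on all of $H^2$ without the deformation step. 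Either way, you need the one geometric input that $\tau$ preserves the fibration class (or that it is antisymplectic) to break the symmetry between the two eigenspaces.
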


\begin{proof}
The involution is antisymplectic because it preserves the fibers of the Lagrangian fibration $\pi$ and it acts as $(-1)$ on the $H^1$ of a smooth fiber of $\pi$.
Since the decomposition into $H^2(\tau)$-invariant and invariant sublattices is independent of $(S,h)$, we may assume  that $\NS(S)=\Z \cdot h$ and hence 
\begin{equation}\label{nerosev}
\NS(M_h(v))=\vartheta(v^{\bot}\cap (\ZZ(1,0,0)+\ZZ(0,h,0)+\ZZ(0,0,1))).
\end{equation}
Since $\NS(M_h(v))^{\bot}$ is an irreducible Hodge structure containing $H^{2,0}(M_h(v))$, and $\tau$ is antisymplectic, it follows that 
\begin{equation}\label{contmeno}
\vartheta(H^{2}(S;\ZZ)_{pr})\subset H^2(M_h(v);\ZZ)_{-},
\end{equation}
On the other hand $\tau$ sends  $\pi^{*}c_1(\cO_{\PP^g}(1))$ to itself i.e., $\pi^{*}c_1(\cO_{\PP^g}(1))\in H^2(M_h(v);\ZZ)_{+}$. If $\alpha$ is an ample class on $M_h(v)$, then $\alpha+\tau^{*}(\alpha)$ is an invariant ample class on $M_h(v)$. It follows that~\eqref{contmeno} is an equality, i.e.,~\eqref{aziomeno} holds. We also get that the $H^2(\tau)$-invariant sublattice has rank at least $2$, and therefore rank $2$ by~\eqref{contmeno}. Since it is orthogonal to $H^2(\tau)_{-}$ it is equal to the right-hand side of~\eqref{nerosev}. Computing, one gets~\eqref{aziopiu}.
\end{proof}

Our next task is to describe the classes of some key divisors on $M_h(v)$ in terms 
of Mukai's map $\vartheta$. 
First of all, by~\cite[Section 2.3]{LePotier:Fitting}, we have that
\[
f:=\mathrm{c}_1\left(\pi^*\cO_{\PP^g}(1)\right)=\vartheta(0,0,-1)
\]
is the nef divisor associated to the Lagrangian fibration~\eqref{beaufibr}.

The effective divisors appearing in the next two propositions are related to the exceptional loci of the divisorial contractions described in Section~\ref{sec:surgery}.
More precisely, when $v=(0,h,0)$, the divisor equals the contracted divisor, while when $v=(0,h,1-g)$ then the divisor is the proper transform (on a HK manifold birational to $M_h(v)$) of the contracted divisor. 

\begin{prop}\label{prop:Delta}
Let $v=(0,h,0)$ and let $\cD$ be the determinant line bundle on $M_h(v)$ with fiber
\[
\cD_{|[F]}={\bigwedge}^{r} H^0(S,F)^\vee\otimes {\bigwedge}^{r} H^1(S,F)
\]
over $[F]$, where $r:=h^0(S,F)=h^1(S,F)$.
The natural section of $\cD$ is non zero and its divisor $\Delta$ has the following properties:
\begin{eqnarray}
\supp\Delta & = & \{[F]\in M_h(v) \mid h^0(S,F)>0\}, \label{supposta} \\
\cl(\Delta) & = & \vartheta(1,0,1). \label{classedel}
\end{eqnarray}
\end{prop}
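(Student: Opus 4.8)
The plan is to realize $\cD$ as the determinant-of-cohomology line bundle of a universal family, exhibit its tautological theta section, and then read off the support~\eqref{supposta} and the class~\eqref{classedel} separately.

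First I would fix notation. Let $M:=M_h(0,h,0)$ and let $\cE$ be a (quasi-)universal family on $S\times M$, with projections $p_S,p_M$. Every $F=i_{C,*}\xi$ parametrized by $M$ is pure of dimension $1$, so $H^2(S,F)=0$, and $\chi(S,F)=-\la v,(1,0,1)\ra=0$; hence $Rp_{M*}\cE$ is a perfect complex of virtual rank $0$ supported in cohomological degrees $0$ and $1$. I would represent it by a two-term complex of vector bundles $d\colon\cK_0\to\cK_1$ with $\rk\cK_0=\rk\cK_1$, chosen so that $\ker d_{[F]}=H^0(S,F)$ and $\coker d_{[F]}=H^1(S,F)$. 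Taking determinants and comparing fibres identifies the line bundle of the statement with $\cD=\det(Rp_{M*}\cE)^{\vee}=\det\cK_1\otimes(\det\cK_0)^{\vee}$, and $\det d=\bigwedge^{\mathrm{top}} d$ is then a tautological section of $\cD$: this is the natural section referred to in the statement.

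The support is immediate: this section vanishes at $[F]$ exactly where $d_{[F]}$ is not an isomorphism, i.e.\ where $\ker d_{[F]}=H^0(S,F)\neq0$, which is~\eqref{supposta}. To see that $\Delta$ is a genuine divisor (the section is not identically zero), I would restrict to a Lagrangian fibre $M_C=\Pic^{g-1}(C)$ over a smooth $C\in|H|$: a general degree-$(g-1)$ line bundle on a smooth genus-$g$ curve has no sections, so $M_C\not\subset\Delta$. For the class I would compute $\cl(\Delta)=c_1(\cD)=-c_1(Rp_{M*}\cE)$ by Grothendieck--Riemann--Roch, $c_1(Rp_{M*}\cE)=\big[p_{M*}\big(\ch(\cE)\cdot p_S^*\td_S\big)\big]_{H^2(M)}$. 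Because $\td_S=(1,0,2)$ and $\sqrt{\td_S}=(1,0,1)=v(\cO_S)$, this expression is precisely Yoshioka's formula~\cite[(1.6)]{Yos:moduli} for $\vartheta$ evaluated at the vector $w\in v^{\bot}$ with $w^{\vee}\sqrt{\td_S}=\td_S$, namely $w=(1,0,1)$ (which lies in $v^{\bot}$ since $\la(1,0,1),(0,h,0)\ra=0$, so no orthogonal projection is needed). Matching normalizations then yields~\eqref{classedel}.

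The one delicate point, and the place I expect to have to be careful, is the sign and normalization bookkeeping relating GRR to Yoshioka's definition of $\vartheta$ (together with the similitude factor, should only a quasi-universal family be available). I would pin these down by two sanity checks that leave $\vartheta(1,0,1)$ as the only possibility. First, effectivity of $\Delta$ excludes the sign-reversed candidate $\vartheta(-1,0,-1)$. Second, restricting $\cE$ to $S\times M_C$ and pushing forward identifies $\cD|_{M_C}$ with $\cO(\Theta_C)$, the principal polarization --- minus the first Chern class of the determinant of cohomology of a Poincar\'e bundle on $\Pic^{g-1}(C)$ being the theta divisor --- so that, since $\vartheta(0,0,1)=-f$ restricts to $0$ on a fibre while $\vartheta(1,0,0)$ restricts to $\Theta_C$, the coefficient of $(1,0,0)$ in $\cl(\Delta)$ is forced to be $1$. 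These two checks fix the signs that the direct GRR computation produces, confirming $\cl(\Delta)=\vartheta(1,0,1)$.
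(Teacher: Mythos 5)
Your proposal is correct and follows essentially the same route as the paper: the paper likewise identifies $\supp\Delta$ as the zero locus of the natural (determinant-of-cohomology) section, verifies the section is nonzero by restricting to $\pi^{-1}(C)=\Pic^{g-1}(C)$ for smooth $C$ where it cuts out the classical $\Theta$ divisor, and obtains \eqref{classedel} from Grothendieck--Riemann--Roch. Your write-up merely makes explicit the two-term complex representing $Rp_{M*}\cE$ and the sign/normalization checks that the paper leaves implicit.
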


\begin{proof}
Equation~\eqref{supposta} is the zero locus of the natural section of $\cD$ and it is not equal to the whole of $M_h(v)$ because if $C\in|\cO_S(H)|$ is smooth its restriction to $\pi^{-1}(C)=\Pic^{g-1}(C)$ is the natural $\Theta$ divisor. Equation~\eqref{classedel} follows from the Grothendieck-Riemann-Roch Theorem.
\end{proof}

\begin{rema}\label{delta=theta}
The divisor $\Delta$ defined above is birational to the relative $(g-1)$-fold symmetric product of the universal family of curves over $|\cO_S(H)|$.
Since $\dim |\cO_S(H)|=g$, it is birational to a $\P^1$-bundle over $S^{[g-1]}$ with generic fiber ruling the pencil of curves through $g-1$ general points. 
\end{rema}

Assume that $g$ is even. The analogue of $\Delta$ on $M_h(0,h,1-g)$ is defined starting from the unique stable vector bundle $A$ on $S$ with Mukai vector
\begin{equation*}
v(A):=\ch(A).\sqrt{\td_S}=\left(2,-h,\frac g2\right).
\end{equation*}
The existence and uniqueness of $A$ are proved in~\cite{Muk:K3}.
Explicitly, $A$ is a Lazarsfeld--Mukai bundle obtained as an elementary modification
\begin{equation}\label{eq:eccoA}
0 \to A \to \cO_S^{\oplus 2} \to i_{C,*}(\xi) \to 0,
\end{equation}
where $C\in|\cO_S(H)|$ is a smooth curve and $\xi$ is a line bundle of degree $g/2+1$ with $h^0(C,\xi)=2$.

\begin{prop}\label{prop:Deltadiv2}
Suppose that $\NS(S)=\ZZ h$, $g$ is even, and $g\ge 4$.
Let $v=(0,h,1-g)$ and let $\cD$ be the determinant line bundle on $M_h(v)$ with fiber
\[
\cD_{|[F]}={\bigwedge}^{r} H^0(S,A^{\vee}\otimes F)^\vee\otimes {\bigwedge}^{r} H^1(S,A^{\vee}\otimes F)
\]
over $[F]$, where $r:=h^0(S,A^{\vee}\otimes F)=h^1(S,A^{\vee}\otimes F)$.
The natural  section of $\cD$ is non zero and its divisor $\Delta$ has the following properties:
\begin{eqnarray}
\supp\Delta & = & \{[ F]\in M_h(v) \mid h^0(S,A^{\vee}\otimes F)>0\}, \label{suppostina} \\
\cl(\Delta) & = & \vartheta\left(2,-h,\frac g2\right). \label{classedelina}
\end{eqnarray}
Moreover if $C\in|\cO_S(H)|$ is smooth the restriction of $\Delta$ to $\pi^{-1}(C)=\Pic^{0}(C)$ is  twice the principal polarization.
\end{prop}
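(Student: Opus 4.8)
The plan is to follow the strategy of Proposition~\ref{prop:Delta} verbatim, replacing the structure sheaf $\cO_S$ by the Lazarsfeld--Mukai bundle $A$. First I would record the numerology that makes the determinantal construction work. Using Mukai's pairing one computes $\langle v(A),v\rangle=\langle(2,-h,\tfrac g2),(0,h,1-g)\rangle=0$, so $v(A)\in v^{\bot}$ and $\chi(S,A^{\vee}\otimes F)=-\langle v(A),v\rangle=0$ for every $[F]\in M_h(v)$. Moreover $F=i_{C,*}(\xi)$ is a torsion sheaf while $A$ is locally free, so Serre duality on $S$ gives $H^2(S,A^{\vee}\otimes F)\cong\Hom(F,A)^{\vee}=0$; hence $h^0(S,A^{\vee}\otimes F)=h^1(S,A^{\vee}\otimes F)=r$ for \emph{all} $[F]$. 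Letting $p\colon M_h(v)\times S\to M_h(v)$ and $q\colon M_h(v)\times S\to S$ be the projections and $\cF$ a (quasi-)universal sheaf, the complex $\mathbf{R}p_{*}(\cF\otimes q^{*}A^{\vee})$ is represented by a two-term complex of vector bundles of equal rank, whose determinant is $\cD$ and the determinant of whose differential is the natural section. Its vanishing locus is exactly~\eqref{suppostina}, which settles the support statement provided the section is not identically zero.

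The non-vanishing of the section and the final assertion I would prove simultaneously by restricting to a smooth fibre $\pi^{-1}(C)=\Pic^{0}(C)$. By the projection formula $h^0(S,A^{\vee}\otimes i_{C,*}\xi)=h^0(C,(A^{\vee}|_{C})\otimes\xi)$, so $\Delta\cap\pi^{-1}(C)$ is the generalized theta divisor $\Theta_{A^{\vee}|_{C}}=\{\xi\in\Pic^{0}(C)\mid h^0(C,A^{\vee}|_{C}\otimes\xi)>0\}$ of the rank~$2$ bundle $A^{\vee}|_{C}$. Since $c_1(A)=-h$, one has $\deg(A^{\vee}|_{C})=h\cdot h=2g-2$, i.e.\ $A^{\vee}|_{C}$ has integral slope $g-1$; the standard computation of the class of a generalized theta divisor (Grothendieck--Riemann--Roch on $C\times\Pic^{0}(C)$) then yields that $\Theta_{A^{\vee}|_{C}}$ has class $2\Theta$, twice the principal polarization, which is the ``Moreover'' claim. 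To see that $\Theta_{A^{\vee}|_{C}}$ is a \emph{proper} divisor for general $C$---equivalently that the section of $\cD$ is non-zero---I would invoke semistability: for general $C\in|\cO_S(H)|$ the restriction of the stable bundle $A^{\vee}$ is semistable of slope $g-1$, so a general $\xi\in\Pic^{0}(C)$ has $h^0(A^{\vee}|_{C}\otimes\xi)=0$.

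For the class~\eqref{classedelina} I would argue exactly as in Proposition~\ref{prop:Delta}: once the section is known to be non-zero, the class of its divisor is $\cl(\Delta)=\mathrm{c}_1(\cD)$, computed by Grothendieck--Riemann--Roch applied to the universal family, and by the very definition of Mukai's isometry $\vartheta$ this produces $\cl(\Delta)=\vartheta(v(A))=\vartheta(2,-h,\tfrac g2)$. As a consistency check one can verify $q_{M_h(v)}\bigl(\vartheta(v(A))\bigr)=\langle v(A),v(A)\rangle=h^2-2g=-2$, the expected square of the proper transform of a prime exceptional divisor.

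The main obstacle is the non-vanishing of the section together with the correct identification of the fibre restriction, i.e.\ the step requiring semistability of $A^{\vee}|_{C}$ for $C\in|\cO_S(H)|$. Because $|\cO_S(H)|$ is not a high multiple of the polarization, the Mehta--Ramanathan restriction theorem does not apply directly; instead one should exploit the specific structure of the Lazarsfeld--Mukai bundle~\eqref{eq:eccoA} (as in~\cite{ASF:Prym}), whose restriction to curves in $|\cO_S(H)|$ is semistable, to guarantee both that the generalized theta divisor is proper and that it has the predicted class $2\Theta$. The remaining ingredients---the orthogonality $\langle v(A),v\rangle=0$, the vanishing of $H^2$, and the Grothendieck--Riemann--Roch class computation---are routine and run in parallel with the case $v=(0,h,0)$.
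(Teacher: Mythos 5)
Your overall strategy coincides with the paper's: identify the restriction of $\Delta$ to a smooth fibre $\pi^{-1}(C)=\Pic^0(C)$ with the generalized theta divisor of the rank-two bundle $A^{\vee}_{|C}$, reduce non-vanishing of the section to semistability of $A^{\vee}_{|C}$ (via Raynaud's theorem, which the paper cites explicitly), and get the class~\eqref{classedelina} from Grothendieck--Riemann--Roch. The numerology ($v(A)\in v^{\bot}$, vanishing of $H^2$, equality of $h^0$ and $h^1$) and the $2\Theta$ computation are fine and match the paper.

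The one genuine gap is exactly the step you flag as ``the main obstacle'': you correctly observe that Mehta--Ramanathan does not apply and assert that one ``should exploit the specific structure of the Lazarsfeld--Mukai bundle,'' but you never produce the argument, so the semistability of $A^{\vee}_{|C}$ --- and with it the non-vanishing of the section and the properness of the fibrewise theta divisor --- remains unproved. The paper closes this with a short self-contained argument that does not really use the Lazarsfeld--Mukai presentation~\eqref{eq:eccoA} at all, only stability of $A^{\vee}$ and $\NS(S)=\ZZ h$: if $A^{\vee}_{|C}\to Q$ were a destabilizing quotient, the elementary modification $0\to E\to A^{\vee}\to i_{C,*}(Q)\to 0$ satisfies $4c_2(E)-c_1(E)^2\le -2g+4<0$, so $E$ is slope unstable by the Bogomolov inequality; since $c_1(E)=0$ and the Picard group is generated by $h$, a destabilizing subsheaf yields a non-zero map $\cO_S(mH)\to E\subset A^{\vee}$ with $m>0$, contradicting the stability of $A^{\vee}$. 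Note also that this argument works for \emph{every} smooth $C\in|\cO_S(H)|$, which is what the ``Moreover'' clause requires (the restriction of $\Delta$ to each smooth fibre must be a proper divisor), whereas your formulation only asserts semistability for general $C$. If you supply an argument of this kind, your proof is complete and essentially identical to the paper's.
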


\begin{proof}
Equation~\eqref{suppostina} is the zero locus of the natural section of $\cD$. The fact that it is not equal to the whole of $M_h(v)$ follows from the more general results we will prove in Section~\ref{subsec:div2} (and later on in Section~\ref{sec:ExplicitMMP}, we will also need a more precise vanishing result).
We give here in any case a sketch of the classical argument, for completeness.
Let $C\in|\cO_S(H)|$ be smooth. 
The zero locus of the restriction of the section to $\pi^{-1}(C)=\Pic^{0}(C)$ is equal to
\begin{equation}\label{doublepol}
\{[\xi]\in \Pic^{0}(C) \mid h^0(C,A^{\vee}_{|C}\otimes\xi)>0\}.
\end{equation}
It suffices to prove that the above set is not the whole of $\Pic^{0}(C)$.
By Proposition 1.6.2. in~\cite{Raynaud} this holds if (and only if) the restriction of $A^{\vee}$ to $C$ is semistable.
This can be directly proved as follows.

Suppose that $A^{\vee}_{|C}$ is not semistable and let $A^{\vee}_{|C}\to Q$ be a destabilizing quotient.
Let $E$ be the locally free sheaf on $S$ fitting into the exact sequence
\begin{equation*}
0\lra E\lra A^{\vee}\lra i_{C,*}(Q)\lra 0.
\end{equation*}
We can compute easily that $4c_2(E)-c_1(E)^2\le -2g+4<0$, and so by the Bogomolov inequality it follows that $E$ is slope unstable.
Since $c_1(E)=0$ and $\NS(S)=\ZZ h$ there exists 
a non zero map $\cO_S(mH)\to E$ for some $m>0$.
Composing with the inclusion $E\subset A^{\vee}$ we get a non zero map $\cO_S(mH)\to A^{\vee}$ contradicting the stability of $A^{\vee}$.

This proves that the set in~\eqref{doublepol} is not the whole of $\Pic^{0}(C)$. Actually the set in~\eqref{doublepol} is the support of a cycle which is  twice the principal polarization of $\Pic^{0}(C)$ (see Proposition 1.8.1 in~\cite{Raynaud}), and which is the restriction of $\Delta$ to  $\Pic^{0}(C)$.
Lastly Equation~\eqref{classedelina} follows again from the Grothendieck-Riemann-Roch Theorem.
\end{proof}

\begin{exam}
We recall that the LLSvS variety of a cubic fourfold with a node is birational to $M_h(0,h,-3)$ (see the discussion at the end of Section~\ref{subsec:pazzaidea}).
An explicit description of an open non empty subset of $\Delta\subset M_h(0,h,-3)$ can be given as follows.
Let $C\in|\cO_S(H)|$ be a general smooth curve.
There are two distinct $g^1_3$'s on $C$, say $D_1,D_2$, and one has the equality of divisors on $C$:
\begin{equation*}
\Delta_{|\Pic^{0}(C)}  =  \{[\xi]\in \Pic^{0}(C) \mid h^0(C,D_1\otimes\xi)>0\}+ \{[\xi]\in \Pic^{0}(C) \mid h^0(C,D_2\otimes\xi)>0\}.
\end{equation*}
For general $g$ we will not have a similar explicit description in terms of linear series, which is the reason we need to look at higher rank vector bundles and thus use Bridgeland stability to deal with them on $S$.
\end{exam}

\subsection{Review of Bridgeland stability conditions}\label{subsec:Bridgeland}

Stability conditions on derived categories were defined by Bridgeland in \cite{Bridgeland:Stab}; the definition was extended to include existence of moduli spaces and to also work  in families in \cite{KS:stability} and \cite{BLMNPS:family}. We briefly recall here the definition given in \cite[Definition 2.1 \& Remark 2.2]{MS:Mumford} in the case of K3 surfaces.

\begin{defi}\label{def:Bridgeland}
A \emph{Bridgeland stability condition} on $\Db(S)$ is a pair $\sigma=(Z,\mathcal{P})$ where
\begin{itemize}
    \item $Z\colon H^*_\mathrm{alg}(S,\ZZ) \to \C$ is a group homomorphism, called \emph{central charge}, and
    \item $\mathcal{P}=\cup_{\phi\in\R}\mathcal{P}(\phi)$ is a collection of full additive subcategories $\mathcal{P}(\phi)\subset \Db(S)$
\end{itemize}
satisfying the following conditions:
\begin{enumerate}[(a)]
    \item for all nonzero $E\in\mathcal{P}(\phi)$ we have $Z(v(E))\in \R_{>0}\cdot e^{i\pi\phi}$;
    \item\label{item:shift} for all $\phi\in\R$ we have $\mathcal{P}(\phi+1)=\mathcal{P}(\phi)[1]$;
    \item if $\phi_1>\phi_2$ and $E_j\in\mathcal{P}(\phi_j)$, then $\Hom(E_1,E_2)=0$;
    \item (Harder--Narasimhan filtrations) for all nonzero $E\in\Db(S)$ there exists a finite sequence of morphisms
    \[
    0=E_0 \xrightarrow{s_1} E_1 \xrightarrow{s_2}\dots \xrightarrow{s_m} E_m=E
    \]
    such that the cone of $s_j$ is in $\mathcal{P}(\phi_j)$ for some sequence $\phi_1>\phi_2>\dots>\phi_m$ of real numbers;
    \item\label{eq:supportproperty} (support property) there exists a quadratic form $Q$ on the vector space $H^*_\mathrm{alg}(S;\mathbb{R})$ such that
    \begin{itemize}
        \item the kernel of $Z$ is negative definite with respect to $Q$, and
        \item for all $E\in\mathcal{P}(\phi)$ for any $\phi$ we have $Q(v(E))\geq0$;
    \end{itemize}
    \item\label{eq:openness} (openness of stability) for every scheme $T$ and every $T$-perfect complex $E\in\mathrm{D}_{T\text{-perf}}(S\times T)$ the set
    \[
    \left\{t\in T \, :\, E_t\in \mathcal{P}(\phi) \right\}
    \]
    is open;
    \item\label{eq:boundedness} (boundedness) for any $v\in\Lambda$ and $\phi\in\R$ such that $Z(v)\in \R_{>0}\cdot e^{i\pi\phi}$ the functor
    \[
    T \mapsto \mathfrak{M}_{\sigma}(v,\phi)(T):=\left\{E\in\mathrm{D}_{T\text{-perf}}(S\times T)\,:\, E_t\in\mathcal{P}(\phi)\text{ and }v(E_t)=v, \text{ for all } t\in T \right\}
    \]
    is bounded.
\end{enumerate}
\end{defi}

The objects in $\mathcal{P}(\phi)$ are called \emph{$\sigma$-semistable} of phase $\phi$. The simple objects in the abelian category $\mathcal{P}(\phi)$ are called \emph{$\sigma$-stable}; Jordan-H\"older filtrations exist and $S$-equivalence classes of $\sigma$-semistable objects are then defined accordingly. The phases of the first and last factor in the Harder--Narasimhan filtration of an object $E$ are denoted $\phi^+(E)$ and $\phi^-(E)$.

Definition~\ref{def:Bridgeland} can be reformulated also in terms of slope.
More precisely, the extension-closed category $\mathcal{A}:=\mathcal{P}((0,1])$ generated by all semistable objects with phases in the interval $(0,1]$ is an abelian category, which is the heart of a bounded t-structure on $\Db(S)$.
The real and imaginary parts of the central charge $Z$ behave like a degree and rank function on $\mathcal{A}$: for a nonzero object $E\in\mathcal{A}$, $\Im Z(E)\geq0$ and if $\Im Z(E)=0$, then $\mathrm{Re}\, Z(E) <0$.
An object $E\in\mathcal{A}$ is $\sigma$-semistable if and only if it is slope-semistable with respect to the slope $\mu_\sigma:=-\frac{\mathrm{Re}\, Z}{\Im Z}$.
Then by property~\ref{item:shift} all objects in $\mathcal{P}$ are shifts of semistable objects in $\mathcal{A}$.
The converse is also true.
Let $Z$ be a central charge on the heart of a bounded t-structure $\mathcal{A}$ satisfying the above numerical properties.
We define (semi)stable objects to be the slope-(semi)stable objects in $\mathcal{A}$, together with their shifts in $\Db(S)$.
We obtain a stability condition in $\Db(S)$ once Harder--Narasimhan filtrations exist in $\mathcal{A}$ and the remaining properties \ref{eq:supportproperty}, \ref{eq:openness}, \ref{eq:boundedness} are satisfied. When we want to stress the category $\mathcal{A}$ in the definition of stability we use the notation $\sigma=(Z,\mathcal{A})$.

By Bridgeland's Deformation Theorem \cite[Theorem 1.2]{Bridgeland:Stab}, the set $\Stab(\Db(S))$ of stability conditions is a complex manifold, when endowed with the coarsest topology such that the functions $E \mapsto \phi^+(E), \phi^-(E)$, for $E \in \Db(S)$, and $\mathcal{Z}\colon\Stab(\Db(S))\to \Hom(H^*_\mathrm{alg}(S;\C),\C)$ defined by $(Z,\mathcal{P})\mapsto Z$ are continuous.
We denote by $\Stab^\dagger(\Db(S))$ the connected component of the space of stability conditions on  $\Db(S)$ described in \cite{Bridgeland:StabK3}. The map $\mathcal{Z}$ on  $\Stab^\dagger(\Db(S))$ is a topological cover of its image, and the latter is explicitly described in \cite[Theorem~1.1]{Bridgeland:StabK3}.

\begin{exam}\label{ex:PicardRank1Bridgeland}
Let us assume that $(S,h)$ is a polarized K3 surface with $\NS(S)=\Z \cdot h$.
In this case, by \cite[Theorem~1.3]{BayerBridgeland:StabK3}, $\Stab^\dagger(\Db(S))$ is simply connected and the map $\mathcal{Z}$ is the universal cover of its image.
An open subset
\[
\Stab^{\mathrm{geom}}(\Db(S))\subset \Stab^\dagger(\Db(S))
\]
can be explicitly defined as follows.
Let $\alpha,\beta\in\R$, $\alpha>0$.
We let $\sigma_{\alpha,\beta}:=(Z_{\alpha,\beta},\coh^\beta(S))$, where
\[
Z_{\alpha,\beta}\colon H^*_\mathrm{alg}(S,\Z) \to \C, \qquad v \mapsto (e^{(\beta+i\alpha)h},v) 
\]
and
\[
\coh^\beta(S) := \langle \mathcal{T}^\beta, \mathcal{F}^\beta[1] \rangle  = \left\{ E\in\Db(S)\,:\,\begin{array}{l} H^i(E)=0, \text{ for all }i\neq 0,-1\\ H^{-1}(E)\in \mathcal{F}^\beta \\ H^0(E)\in \mathcal{T}^\beta \end{array}\right\},
\]
where, for a complex $E\in\Db(S)$, $H^i(E)$ denotes its $i$-th cohomology sheaf and
\begin{align*}
    \mathcal{T}^{\beta} &:= \{E \in \coh(S) : \ \text{all quotients $E \thra Q$ satisfy $\mu(Q) > \beta$}\}, \\
    \mathcal{F}^{\beta} &:= \{E \in \coh(S) : \ \text{all non-trivial subobjects $K \hra E$ satisfy $\mu(K) \leq \beta$}\}.
\end{align*}

By \cite[Lemma 6.2]{Bridgeland:StabK3} and \cite[Theorem 1.4]{Toda:ModuliK3}, the pair $\sigma_{\alpha,\beta}$ gives a stability condition on $\Db(S)$ if and only if $Z_{\alpha,\beta}(\delta)\notin\R_{\leq0}$, for all $\delta\in H^*_\mathrm{alg}(S,\Z)$ with $\delta^2=-2$.
The open subset $\Stab^{\mathrm{geom}}(\Db(S))$ consists of all the orbits of the stability conditions $\sigma_{\alpha,\beta}$ by the action of the group $\widetilde{\mathrm{GL}}_2^+(\R)$, the universal cover of the group of $2\times 2$ real matrices with positive determinant. It can also be characterized as the subset of $\Stab(\Db(S))$ consisting of those stability conditions where the skyscraper sheaves of length one are all stable of the same phase.
The whole component $\Stab^\dagger(\Db(S))$ is the union of all the orbits of the closure $\overline{\Stab^{\mathrm{geom}}(\Db(S))}$ with respect to the action of the group of autoequivalences of $\Db(S)$.
\end{exam}

Let $v\in H^*_\mathrm{alg}(S,\Z)$ be a primitive Mukai vector.
For a stability condition $\sigma=(Z,\mathcal{P})\in\Stab^\dagger(\Db(S))$, we denote by $M_\sigma(v,\phi)$ the moduli space parametrizing $S$-equivalence classes of $\sigma$-semistable objects in $\mathcal{P}(\phi)$ with Mukai vector $v$.
As mentioned above, if the phase is fixed, by shifting we can always assume that all $\sigma$-semistable objects lie in $\mathcal{A}$ and have Mukai vector $\pm v$.
Hence, by abuse of notation, we will write $M_\sigma(v)$ and forget the phase.\footnote{Equivalently, we could have defined $M_\sigma(v)$ to denote the moduli space parametrizing $S$-equivalence classes of $\sigma$-semistable objects in $\mathcal{A}$ with Mukai vector $\pm v$.}
We denote by $M_\sigma^\mathrm{st}(v)$ the open subspace parametrizing $\sigma$-stable objects.

The main result of \cite{BM:projectivity} establishes that for any such choice of stability condition $\sigma\in \Stab^\dagger(\Db(S)) $, there is a piece-wise analytic continuous map
\begin{equation}\label{eq:ell defn}
\ell\colon \Stab^\dagger(\Db(S))\rightarrow NS(M_\sigma(v))
\end{equation}
whose image is the intersection of the movable cone with the big cone of $M_\sigma(v)$. In particular, there is a naturally defined divisor class $\ell_\sigma$ on $M_\sigma(v)$; for the stability conditions $\sigma_{\alpha,\beta}$ in Example~\ref{ex:PicardRank1Bridgeland}, the class of this divisor has been computed in \cite[Lemma 9.2]{BM:projectivity}.

We have the following result, summarized in \cite[Theorem 21.24 \& Theorem 21.25]{BLMNPS:family}, and based on previous work in \cite{Lieblich:moduli,AP:tstructures,Toda:ModuliK3,BM:projectivity,AHLH:Moduli}:

\begin{theo}\label{thm:Moduli}
Let $v\in H^*_\mathrm{alg}(S,\Z)$ be a Mukai vector with $v^2\geq-2$.
Then for any $\sigma\in\Stab^\dagger(\Db(S))$ the moduli space $M_\sigma(v)$ is a non-empty proper algebraic space and the divisor class $\ell_\sigma$ is strictly nef.
If $v$ is primitive and $\sigma$ is generic with respect to $v$, then $M_\sigma(v)=M_\sigma^\mathrm{st}(v)$ is smooth projective integral hyperk\"ahler manifold of dimension $v^2+2$ deformation equivalent to the Hilbert scheme of points on a K3 surface and the class $\ell_\sigma$ is ample.
\end{theo}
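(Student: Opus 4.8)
The plan is to separate the statement into two layers: first the assertions that hold for \emph{every} $\sigma\in\Stab^\dagger(\Db(S))$ (algebraicity, properness, non-emptiness, and strict nefness of $\ell_\sigma$), and then the finer structural assertions in the primitive, generic case (smoothness, the dimension formula, projectivity, integrality, the $\mathrm{K3}^{[n]}$ deformation type, and ampleness). For the second layer the key idea is to reduce to the \emph{large volume}, or Gieseker, chamber: for $\sigma$ in that chamber the objects of class $v$ that are $\sigma$-(semi)stable are exactly the (twisted) Gieseker $h$-(semi)stable sheaves, so $M_\sigma(v)\cong M_h(v)$ and each assertion is classical \cite{Yos:moduli}. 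One then transports these properties to arbitrary generic $\sigma$ by moving through $\Stab^\dagger(\Db(S))$ and crossing walls one at a time \cite{BM:walls}.

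For the first layer I would proceed as follows. The stack of objects in the heart $\mathcal{A}=\mathcal{P}((0,1])$ with Mukai vector $v$ is algebraic and of finite type: algebraicity of the stack of universally gluable perfect complexes is Lieblich's theorem \cite{Lieblich:moduli}, the heart is cut out in families using Abramovich--Polishchuk \cite{AP:tstructures}, and conditions (f) (openness) and (g) (boundedness) of Definition~\ref{def:Bridgeland} carve out a finite-type open substack of $\sigma$-semistable objects. To pass from this stack to a proper algebraic space $M_\sigma(v)$ I would invoke the intrinsic existence criteria of Alper--Halpern-Leistner--Heinloth \cite{AHLH:Moduli}: the existence of Harder--Narasimhan filtrations (condition (d)) yields $\Theta$-reductivity, $S$-completeness is checked via uniqueness of limits of semistable objects, and boundedness then produces a separated proper good moduli space parametrizing $S$-equivalence classes; the whole package is assembled in families in \cite{BLMNPS:family}. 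Non-emptiness for $v^2\ge -2$ follows by deforming $\sigma$ to the Gieseker chamber, where $M_\sigma(v)\cong M_h(v)$ is non-empty by Yoshioka's existence theorem \cite{Yos:moduli}, and then observing that properness of the semistable moduli space forces non-emptiness to persist across every wall.

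Strict nefness of $\ell_\sigma$ is the positivity theorem of Bayer--Macr\`i \cite{BM:projectivity}: the class $\ell_\sigma$ is nef, and for a complete curve $C\subset M_\sigma(v)$ one has $\ell_\sigma\cdot C=0$ precisely when all objects parametrized by $C$ are $S$-equivalent. Since $M_\sigma(v)$ is the good moduli space of $S$-equivalence classes, distinct closed points are non-$S$-equivalent, so no curve consists of mutually $S$-equivalent objects and hence $\ell_\sigma\cdot C>0$ for every curve. For the primitive, generic case, genericity of $\sigma$ with respect to $v$ means that no wall passes through $\sigma$, so a strictly semistable object---whose Jordan--H\"older factors would carry Mukai vectors proportional to the primitive $v$---cannot exist, and thus $M_\sigma(v)=M_\sigma^{\mathrm{st}}(v)$ is a fine moduli space. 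A $\sigma$-stable object $E$ is simple, so $\Hom(E,E)=\C$, and by Serre duality on the K3 surface $\Ext^2(E,E)\cong\Hom(E,E)^\vee=\C$ is entirely trace; the deformation theory of objects in $\Db(S)$ is then unobstructed and $M_\sigma(v)$ is smooth of dimension $\dim_\C\Ext^1(E,E)=v^2+2$, the last equality coming from $\chi(E,E)=-v^2$ together with $\Ext^0(E,E)=\Ext^2(E,E)=\C$. Projectivity, irreducibility and the $\mathrm{K3}^{[n]}$ deformation type follow by connecting $\sigma$ to the Gieseker chamber: by the wall-crossing analysis of \cite{BM:walls} each wall induces a birational map that is an isomorphism in codimension one between smooth projective hyperk\"ahler manifolds, so $M_\sigma(v)$ is birational to the projective $\mathrm{K3}^{[n]}$-type manifold $M_h(v)$ and hence, by \cite{Huy:bir}, deformation equivalent to it and integral. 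Finally $\ell_\sigma$, being nef and equal to the natural polarization attached to the good moduli space in the chamber of $\sigma$, lies in the interior of the corresponding nef chamber and is therefore ample.

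The main obstacle I expect is the first layer for \emph{arbitrary} $\sigma$: verifying the $S$-completeness and $\Theta$-reductivity hypotheses of the Alper--Halpern-Leistner--Heinloth machine for Bridgeland-semistable objects, and doing so in families, is the genuine technical heart of the theory and is far less formal than the reduction to Gieseker stability. Controlling the birational geometry at each wall---ensuring the wall-crossing transformations really are isomorphisms in codimension one between smooth projective hyperk\"ahler manifolds rather than worse degenerations---is the other substantial input, and it is exactly where the hyperk\"ahler MMP of \cite{BM:walls} does the real work.
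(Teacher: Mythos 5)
The paper does not prove this statement at all: Theorem~\ref{thm:Moduli} is quoted verbatim as a summary of \cite[Theorems 21.24 \& 21.25]{BLMNPS:family}, resting on \cite{Lieblich:moduli,AP:tstructures,Toda:ModuliK3,BM:projectivity,AHLH:Moduli}, so there is no internal argument to compare yours against. Your outline is a faithful reconstruction of how those references establish the result, and the two layers you identify (good moduli space machinery for arbitrary $\sigma$; Mukai's deformation theory plus reduction to the Gieseker chamber for primitive $v$ and generic $\sigma$) are exactly the right decomposition. One point where your route diverges from the actual literature and is slightly imprecise: you assert that each wall between $\sigma$ and the Gieseker chamber "induces a birational map that is an isomorphism in codimension one between smooth projective hyperk\"ahler manifolds." This is not true wall by wall --- divisorial and totally semistable walls occur --- and, more importantly, the wall-crossing analysis of \cite{BM:walls} logically \emph{presupposes} the projectivity and deformation-type statements you are trying to prove, so invoking it here is circular. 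The argument in \cite{BM:projectivity} instead produces, for generic $\sigma$, a Fourier--Mukai equivalence identifying $M_\sigma(v)$ with a Gieseker moduli space on a (possibly twisted, possibly different) K3 surface sitting in the large-volume limit of the image stability condition; projectivity, integrality, the dimension formula and the $\mathrm{K3}^{[n]}$ deformation class are then imported directly from \cite{Yos:moduli}, and ampleness of $\ell_\sigma$ follows because $\ell$ is an open map near the generic $\sigma$, so $\ell_\sigma$ lies in the interior of the nef cone. With that substitution your sketch matches the standard proof; everything else, including the use of \cite{AHLH:Moduli} for $\Theta$-reductivity and $S$-completeness and the positivity lemma for strict nefness, is as in the cited sources.
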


Mukai's isomorphism $\vartheta\colon v^\perp\xrightarrow{\cong}H^2(M_\sigma(v);\ZZ)$ holds more generally for moduli spaces $M_\sigma(v)$, when $v$ is primitive and $\sigma$ generic with respect to $v$. By abuse of notation, we will sometimes directly identify $v^\perp$ with $H^2(M_\sigma(v);\ZZ)$, forgetting the map $\vartheta$.

We will also need the main result of \cite{BM:walls} in order to study nef and movable cones of moduli spaces (the case of movable cones, Part~\ref{enum:walls2} below, was proved earlier and in greater generality in \cite[Lemma 6.22]{Markman:Survey}). Recall that the the positive cone $\mathrm{Pos}(M)$ of a HK manifold $M$ is the connected component containing an ample divisor of the set $\{ D\in\NS(M)_\R\,:\, q_M(D)>0\}$.

\begin{theo}\label{thm:walls}
Let $v\in H^*_\mathrm{alg}(S,\Z)$ be a primitive Mukai vector with $v^2\geq2$ and let $\sigma_0\in\Stab^\dagger(\Db(S))$ be a generic stability condition with respect to $v$.
Let $M:=M_{\sigma_0}(v)$.
\begin{enumerate}[(i)]
\item\label{enum:walls1} All smooth projective HK manifolds which are birational to $M$ are isomorphic to $M_{\sigma}(v)$, for some $\sigma\in\Stab^\dagger(\Db(S))$ generic with respect to $v$, and conversely each $M_\sigma(v)$ is birational to $M$.
\item\label{enum:walls2} The interior of the movable cone of $M$ is the connected component of
\[
\mathrm{Pos}(M) \setminus \bigcup_{\begin{subarray}{c}{a \in H^*_\mathrm{alg}(S;\Z)\ \text{\rm  s.t.}}\\ {\text{\rm either } a^2=-2\ \text{\rm and } (a,v)=0,} \\ {\text{\rm or } a^2=0\ \text{\rm and } (a,v)=1,2}\end{subarray}} \vartheta(a^\bot),
\]
containing an ample divisor. 
\item\label{enum:walls3} The ample cones of each birational model $M_{\sigma}(v)$ can be identified with the connected component of
\[
\mathrm{Pos}(M) \setminus \bigcup_{\begin{subarray}{c}{a \in H^*_\mathrm{alg}(S;\Z)\ \text{\rm  s.t.}}\\ {a^2\geq -2\ \text{\rm and }} \\ {0\leq (a,v) \leq v^2/2}\end{subarray}} \vartheta(a^\bot)
\]
containing an ample divisor on $M_\sigma(v)$.
\end{enumerate}
\end{theo}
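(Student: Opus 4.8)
The plan is to follow the strategy of \cite{BM:walls}, deducing all three statements from the wall-and-chamber structure of $\Stab^\dagger(\Db(S))$ with respect to $v$, together with the Positivity Lemma of \cite{BM:projectivity}. The fundamental input, already recorded in Theorem~\ref{thm:Moduli}, is that to each generic $\sigma$ one attaches the class $\ell_\sigma$ on $M_\sigma(v)$, which is ample and varies continuously as $\sigma$ moves within a chamber. First I would make precise the link between stability and geometry: by the Positivity Lemma, for a family of $\sigma$-semistable objects parametrized by a projective curve $C\to M_\sigma(v)$ one has $\ell_\sigma\cdot C\ge 0$, with equality exactly when the objects over $C$ are generically $S$-equivalent. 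Hence contracted curves on a model correspond precisely to curves of $S$-equivalent objects, so that crossing a wall in $\Stab^\dagger$ --- where new strictly semistable objects of class $v$ appear --- is exactly what alters the birational model. This immediately yields the easy direction of~\ref{enum:walls1}, that each $M_\sigma(v)$ is birational to $M$.

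Next I would carry out the local analysis at a wall $\mathcal{W}$. To $\mathcal{W}$ one associates the rank-two primitive sublattice $\mathcal{H}_{\mathcal{W}}\subset H^*_{\mathrm{alg}}(S;\Z)$ of classes whose central charge is aligned with $Z(v)$ along $\mathcal{W}$; this lattice is hyperbolic and contains $v$. The combinatorics of the wall is entirely encoded in $\mathcal{H}_{\mathcal{W}}$: the Mukai vectors of the Jordan--H\"older factors of a destabilized object lie in $\mathcal{H}_{\mathcal{W}}$, and a finite root-system analysis (using $a^2\ge -2$ for any class $a$ occurring as a stable or spherical factor) bounds which decompositions $v=\sum a_i$ can occur. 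The decisive dichotomy, obtained by estimating the dimension of the locus of strictly semistable objects, is whether this locus is a divisor, has codimension at least two, or is all of $M$; these three cases correspond respectively to divisorial contractions, flopping contractions, and fake (or Lagrangian-fibration) walls.

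The heart of the argument is then the exact matching of these geometric cases with the arithmetic conditions in the theorem. One shows that the non-stable locus is divisorial precisely when $\mathcal{H}_{\mathcal{W}}$ contains either a $(-2)$-class $a$ with $(a,v)=0$ (a Brill--Noether, i.e.\ spherical, divisorial contraction) or an isotropic class $a$ with $(a,v)\in\{1,2\}$ (a Hilbert--Chow-- or Li--Gieseker--Uhlenbeck--type contraction); these are exactly the classes removed in the movable-cone formula~\ref{enum:walls2}. Every remaining wall at which the moduli space genuinely changes induces a flop, whose exceptional locus is cut out by the larger collection of classes $a$ with $a^2\ge -2$ and $0\le (a,v)\le v^2/2$, giving~\ref{enum:walls3}. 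The remaining content of~\ref{enum:walls1}, that \emph{every} smooth HK birational model of $M$ arises as some $M_\sigma(v)$, follows by combining this local picture with the Global Torelli theorem, which guarantees that each chamber of the movable cone is realized by a birational HK model and that the wall-crossings account for all of them.

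I expect the main obstacle to be the lattice-theoretic classification of walls, and in particular the control of \emph{totally semistable} walls, where no object of class $v$ remains stable and the naive dimension count degenerates. There one must use the structure of the spherical classes inside $\mathcal{H}_{\mathcal{W}}$ --- via the associated Weyl-group action and the existence of a canonical minimal decomposition of $v$ --- to show that even these walls induce only flops or divisorial contractions of the expected type, and never spurious extra contractions. Matching the resulting chamber decomposition of $\mathrm{Pos}(M)$ with the Mori-theoretic one, so that the connected component containing an ample class is exactly the ample (respectively movable) cone, is the delicate final bookkeeping.
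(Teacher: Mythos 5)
The paper offers no proof of this statement: it is quoted as the main result of \cite{BM:walls} (with Part~\ref{enum:walls2} also credited to \cite[Lemma 6.22]{Markman:Survey}), so there is no internal argument to compare yours against. Your outline faithfully reproduces the strategy of Bayer--Macr\`i's original proof --- the Positivity Lemma linking $\ell_\sigma$-degree zero to $S$-equivalence, the rank-two hyperbolic lattice $\mathcal{H}_{\mathcal{W}}$ attached to a wall, the trichotomy of divisorial (Brill--Noether, Hilbert--Chow, Li--Gieseker--Uhlenbeck) versus flopping versus fake/totally semistable walls matched to the arithmetic conditions in \ref{enum:walls2} and \ref{enum:walls3}, and the Global Torelli theorem for the surjectivity claim in \ref{enum:walls1} --- so as a blind reconstruction of the cited proof it is on target, though of course it remains a sketch of a long argument rather than a self-contained proof.
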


The connection between Part~\ref{enum:walls1} and Parts~\ref{enum:walls2} and \ref{enum:walls3} in Theorem~\ref{thm:walls} is via the map $\ell$ in \eqref{eq:ell defn}. More precisely, this map glues to a piece-wise analytic continuous map $\ell\colon\Stab^\dagger(\Db(S))\to \NS(M)_\R$. Then any \emph{wall} in the positive cone of M, namely the codimension~1 boundary components of the above chamber decomposition, is given by a \emph{wall} in $\Stab^\dagger(\Db(S))$, namely the codimension~1 components of the locus where semistable objects of Mukai vector $v$ change. Conversely, a wall $\mathcal{W}$ in $\Stab^\dagger(\Db(S))$ gives a wall in the positive cone if and only if, for $\overline{\sigma}\in\mathcal{W}$, if we write $\ell_{\overline{\sigma}}=\vartheta(w_{\overline{\sigma}})$, then $w_{\overline{\sigma}}$ is orthogonal to $a\in H^*_\mathrm{alg}(S;\Z)$ with $a^2\geq -2$ and $0\leq (a,v) \leq v^2/2$.

The following two examples, which are applications of Theorem~\ref{thm:walls}, will be used to study explicitly the exceptional loci of the wall--crossing flops.

\begin{exam}[{\cite[Example 9.7]{BM:projectivity}}]\label{ex:rk1}
Let $(S,h)$ be a polarized K3 surface with $\NS(S)=\Z \cdot h$, $h^2=2g-2$.
Let $v\in H^*_\mathrm{alg}(S,\Z)$ be a primitive Mukai vector, $v=(r_0,c_0\cdot h,s_0)$, with $r_0,c_0,s_0\in\Z$, $r_0\geq0$.
We assume there exist $x,y\in\Z$, $x>0$, such that $xc_0-y r_0=1$.
We define $\alpha_0>0$ as
\[
\alpha_0 := \begin{cases}\frac{1}{x\sqrt{g-1}}, \text{ if } \frac{(g-1) y^2 + 1}{x}\in\Z \\ 0, \text{ otherwise.} \end{cases}
\]
and we consider the stability conditions $\sigma_{\alpha,y/x}$ in $\Stab^{\mathrm{geom}}(\Db(S))$, for $\alpha>\alpha_0$.

Then there is no wall for $M_{\sigma_{\alpha,y/x}}(v)$ 
for all $\alpha>\alpha_0$.
Indeed, the imaginary part of $Z_{\alpha,y/x}$ has the property that
\[
\frac{1}{\alpha}\cdot \Im Z_{\alpha,y/x}(r,c\cdot h, s) = (2g-2) \frac{xc-yr}{x} \in (2g-2)\frac{1}{x}\cdot \Z
\]
and $(1/\alpha)\Im Z_{\alpha,y/x}(v_0)= (2g-2)/x$ is the smallest possible positive value.

As a consequence, $\ell_{\sigma_{\alpha,y/x}}$ is ample on $M_{\sigma_{\alpha,y/x}}(v)$ for all $\alpha> \alpha_0$.
In particular, if we denote the constant moduli space $M:=M_{\sigma_{\alpha,y/x}}(v)$, we have that
\[
\R_{\geq0} \left\{\ell_{\sigma_{\alpha,y/x}}\, : \, \alpha>\alpha_0\right\} \subset \mathrm{Amp}(M).
\]
\end{exam}

\begin{exam}\label{ex:rk2}
In the notation of Example~\ref{ex:rk1}, we assume now there exist $x,y\in\Z$, $x>0$, such that $xc_0-y r_0=2$.
In this case, there may be walls, but since $(1/\alpha)\Im Z_{\alpha,y/x}(v_0)= 2(2g-2)/x$, if we denote by $v_1$ the Mukai vector of a destabilizing subobject or quotient, we can only have $(1/\alpha)\Im Z_{\alpha,y/x}(v_1)=(2g-2)/x$. Hence, if there exists $\alpha_1>\alpha_0$ such that $E\in M_{\sigma_{\alpha_1,y/x}}(v_0)$ is not stable, we must have an exact sequence
\[
0 \to F \to E \to Q \to 0
\]
with $(1/\alpha)\Im Z_{\alpha,y/x}(F)=(2g-2)/x$ and $(1/\alpha)\Im Z_{\alpha,y/x}(Q)=(2g-2)/x$. Thus, $F$ and $Q$ are stable for all $\alpha>\alpha_0$, by Example~\ref{ex:rk1}.
In particular, the birational transformation induced by such a wall at $\alpha_1$ is a Mukai flop whose exceptional locus is the projective bundle (in the analytic topology) $\PP_{Z}(\mathcal{V})$, where
\[
Z = M_{\sigma_{\alpha_1,y/x}}(v(F)) \times M_{\sigma_{\alpha_1,y/x}}(v(Q))
\]
and $\mathcal{V}$ is the twisted vector bundle of rank $(v(F),v(Q))$ induced by the relative Ext-sheaf.\footnote{$\mathcal{V}$ is an actual vector bundle when universal families exist on $M_{\sigma_{\alpha_1,y/x}}(v(F))$ and $M_{\sigma_{\alpha_1,y/x}}(v(Q))$.}
\end{exam}

Finally, we recall that, under the assumptions of Example~\ref{ex:rk1} and Example~\ref{ex:rk2}, the moduli space $M_{\sigma_{\alpha,y/x}}(v)$ is isomorphic to a moduli space of $h$-semistable sheaves, for $\alpha\gg\alpha_0$ by \cite[Proposition 14.2]{Bridgeland:StabK3} and \cite[Section 6.2]{Toda:ModuliK3}.

\subsection{Divisibility 1}\label{subsec:div1}
In this section we follow mostly~\cite{Bayer:BN}, but see also~\cite{Mar:BN,Yos:Mukai,Yos:BN,ArcaraBertram:BridgelandKtrivial}.
Let $(S,h)$ be a polarized K3 surface of genus $g$ with $\NS(S)=\Z \cdot h$.

We let $v = (0,h,0)$ and $M:=M_h(v)$.
Let
\begin{equation*}
f = \vartheta(0,0,-1),\quad \delta= \vartheta(1,0,1)=\cl(\Delta),
\quad \lambda = \vartheta(1,0,-1)=2f+\delta,
\end{equation*}
where $\Delta$ is the divisor defined in Proposition~\ref{prop:Delta}.
Notice that $\lambda$ has square $2$ and divisibility $1$.
Recall from Section~\ref{subsec:examples} that the class $f$ induces the Lagrangian fibration
\[
\pi\colon M \to |\cO_S(H)|\cong \P^g,
\]
and is thus one of the two rays of the nef cone.
We can determine the other ray of the nef cone as well.

\begin{lemm}\label{lem:NefDiv1}
The nef and movable cone coincide for $M$:
\[
\Nef(M) = \Mov(M) = \R_{\geq0} f+ \R_{\geq0}\lambda.
\]
\end{lemm}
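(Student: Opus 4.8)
The plan is to read off both cones directly from Theorem~\ref{thm:walls}, parts~\ref{enum:walls2} and~\ref{enum:walls3}, by locating the walls inside the rank-two lattice $\NS(M)=\vartheta(v^{\bot}_{\mathrm{alg}})$. First I would record the arithmetic of the setup. Since $\NS(S)=\Z\cdot h$ and $(v,(r,ch,s))=c\,h^2=c(2g-2)$, the orthogonal $v^{\bot}_{\mathrm{alg}}$ is spanned by $(1,0,0)$ and $(0,0,1)$, so $\NS(M)$ has rank $2$, with basis $\vartheta(1,0,0)$ and $f=\vartheta(0,0,-1)$. Transporting Mukai's pairing, one computes $q_M(f)=0$, $q_M(\lambda)=2$, $q_M(\delta)=-2$, $(f,\lambda)=1$, $(f,\delta)=1$, and $(\lambda,\delta)=0$. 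Hence the connected component of $\mathrm{Pos}(M)$ relevant to us is the cone spanned by the isotropic class $f$ and $\vartheta(1,0,0)$, and $\lambda$ lies strictly in its interior (indeed $\lambda=f+\vartheta(1,0,0)=2f+\delta$ and $q_M(\lambda)=2>0$).

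The key step is to enumerate the walls $\vartheta(a^{\bot})$ that actually meet this open positive cone. Writing $a=(r,ch,s)$, the line $\vartheta(a^{\bot})$ depends only on $(r,s)$, and a short sign analysis shows it meets the open positive cone precisely when $rs>0$. For the ample cone (part~\ref{enum:walls3}) the constraints are $a^2\ge -2$ and $0\le (a,v)\le v^2/2=g-1$; since $(a,v)=c(2g-2)$, being forced into $[0,g-1]$ gives $c=0$, whence $a^2=-2rs\ge -2$, i.e.\ $rs\le 1$. Combined with $rs>0$ this leaves only $rs=1$, that is $a=\pm\delta$, giving the single wall $\vartheta(\delta^{\bot})$, which passes through $\lambda$. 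For the movable cone (part~\ref{enum:walls2}) the three families ($a^2=-2,(a,v)=0$ and $a^2=0,(a,v)\in\{1,2\}$) must be checked: the parity of $(a,v)=c(2g-2)$ rules out $(a,v)=1$ entirely, the case $(a,v)=2$ forces $g=2$ and again produces exactly the wall through $\lambda$, and $a^2=-2,(a,v)=0$ reproduces $a=\pm\delta$. Thus in every case the unique wall meeting the open positive cone is $\vartheta(\delta^{\bot})$.

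This wall splits the open positive cone into two chambers, one bounded by the rays $f$ and $\lambda$, the other by $\lambda$ and $\vartheta(1,0,0)$. Since $f$ is nef and isotropic—hence lies both on the boundary of $\mathrm{Pos}(M)$ and on the boundary of the nef cone, as recalled just before the statement—the chamber containing an ample class must be the one adjacent to $f$, namely $\langle f,\lambda\rangle$. The very same chamber is selected for the movable cone. Taking closures yields $\Nef(M)=\Mov(M)=\langle f,\lambda\rangle$.

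The main obstacle I anticipate is the careful wall enumeration rather than any conceptual difficulty: one must confirm that no stray class $a$ with nonzero $h$-component slips past the numerical constraints, which is precisely where the divisibility of $(a,v)=c(2g-2)$ does the work, and one must separately dispose of the borderline genus $g=2$ case in the movable-cone computation. The only remaining point to pin down is the orientation—that the ample chamber sits on the $f$-side of the wall—which follows immediately from the nefness of the Lagrangian class $f$.
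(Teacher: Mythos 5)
Your proof is correct; it carries out the explicit wall enumeration that the paper's own proof mentions as an alternative (``an explicit computation with Pell's equations'') and then sidesteps. Both arguments rest on Theorem~\ref{thm:walls}, and your treatment of the movable cone --- the single wall $\vartheta(\delta^{\bot})$ through $\lambda$ coming from $\delta^2=-2$, $(\delta,v)=0$ --- is exactly the paper's. The difference is in how the ample chamber is pinned down. The paper follows the path of stability conditions $\sigma_{\alpha,0}$, $\alpha>1/\sqrt{g-1}$: Example~\ref{ex:rk1} shows there are no walls along it, and \cite[Lemma 9.2]{BM:projectivity} identifies the limits of $\ell_{\alpha,0}$ as $f$ and $\lambda$, giving $\Nef(M)\supset\langle f,\lambda\rangle$ directly. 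You instead enumerate all potential walls numerically --- the line $\vartheta(a^{\bot})\cap\NS(M)_{\RR}$ depends only on $(r,s)$, meets the relevant component of $\Pos(M)$ only when $rs>0$, and the constraint $0\le (a,v)=c(2g-2)\le g-1$ forces $c=0$ and hence $rs\le 1$ --- and then select the chamber adjacent to the nef isotropic class $f$. Your enumeration needs no Pell's equations because the constraints collapse to $rs=1$, and the chamber-selection step (the closure of the ample cone must contain the nef class $f$, which lies in only one of the two closed chambers) is a clean substitute for the $\ell_{\alpha,0}$ limit computation. What your route does not deliver is the byproduct the paper records immediately after the lemma, namely that every object of $M$ remains stable for all $\sigma_{\alpha,0}$ with $\alpha>1/\sqrt{g-1}$, which is reused later (e.g.\ in the proof of Lemma~\ref{lemm:Delstratdiv1}); your handling of the borderline $g=2$ case for $(a,v)=2$ and of the identification of the ample component of $\Pos(M)$ is otherwise complete.
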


\begin{proof}
We want to apply Theorem~\ref{thm:walls}.
This is an explicit computation with Pell's equations, but it can also be easily done by using Example~\ref{ex:rk1}: indeed $v=(0,h,0)$ satisfies the assumptions, with $x=1$ and $y=0$.
By \cite[Lemma 9.2]{BM:projectivity}, the associated divisor class $\ell_{\sigma_{\alpha,0}}$ on $M$ satisfies 
\begin{align*}
&\ell_{\sigma_{\alpha,0}} \to f,\qquad \text{for } \alpha \to \infty\\
&\ell_{\sigma_{\alpha,0}} \to \lambda,\qquad \text{for } \alpha \to \frac{1}{\sqrt{g-1}},
\end{align*}
up to positive multiplicative constants.
Hence
\[
\Nef(M) \supset \R_{\geq0} f+ \R_{\geq0}\lambda.
\]
But $\delta^2=-2$ and $(\delta,\lambda)=0$. Hence, by Theorem~\ref{thm:walls}, $\lambda$ is not in the interior of the movable cone of $M$ and
\[
\Mov(M) \subset \R_{\geq0} f+ \R_{\geq0}\lambda,
\]
finishing the proof.
\end{proof}

Lemma~\ref{lem:NefDiv1} gives that the effective cone of  $M$ is $\mathbb{R}_{\ge0} f + \mathbb{R}_{\ge0}\delta$.
Since $\delta$ is indivisible and we are assuming that $\NS(S)=\Z \cdot h$, we notice the following consequence about the divisor $\Delta$.

\begin{coro}\label{coro:NefDiv1}
The divisor $\Delta$ on $M$ is prime i.e., reduced and irreducible.
\end{coro}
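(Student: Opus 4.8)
The plan is to deduce primality of $\Delta$ formally from the shape of the effective cone established in Lemma~\ref{lem:NefDiv1}, together with the indivisibility of the class $\delta=\cl(\Delta)$. The only geometric input needed is that a nonzero effective divisor has nonzero class; everything else is convex geometry of the two-dimensional cone $\Eff(M)=\la f,\delta\ra$ inside $\NS(M)_\RR$.

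First I would write $\Delta=\sum_i a_i D_i$ for the decomposition into distinct prime divisors with multiplicities $a_i\in\ZZ_{>0}$, so that $\sum_i a_i\,\cl(D_i)=\delta$ in $\NS(M)$. Each $\cl(D_i)$ is an effective class, hence lies in $\Eff(M)=\la f,\delta\ra$. Since $\Eff(M)$ is generated by the two rays $\RR_{\geq0}f$ and $\RR_{\geq0}\delta$, the ray $\RR_{\geq0}\delta$ is a face of the cone; as $\delta$ is written as a nonnegative combination of the cone elements $\cl(D_i)$ and itself lies on this face, each summand must lie on the face as well, i.e. $\cl(D_i)\in\RR_{\geq0}\delta$ for every $i$.

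Finally I would invoke indivisibility: since $\delta$ is primitive in $\NS(M)$, one has $\RR_{\geq0}\delta\cap\NS(M)=\ZZ_{\geq0}\delta$, so $\cl(D_i)=c_i\delta$ with $c_i\in\ZZ_{\geq0}$, and in fact $c_i\geq1$ because $D_i$ is a nonzero effective divisor and therefore has nonzero class (intersect with a power of an ample class). Substituting back gives $\sum_i a_i c_i=1$ with all $a_i,c_i\geq1$, which forces a single component with $a_1=c_1=1$. Hence $\Delta=D_1$ is reduced and irreducible. There is no genuine obstacle here: the argument is purely formal once Lemma~\ref{lem:NefDiv1} supplies $\Eff(M)=\la f,\delta\ra$, and the only points to state carefully are that the extremal ray is a face of the cone and that primitivity of $\delta$ upgrades ``nonnegative real multiple of $\delta$'' to ``positive integer multiple of $\delta$.''
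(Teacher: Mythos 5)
Your argument is correct and is exactly the route the paper intends: the corollary is stated there without a formal proof, justified only by the preceding sentence that $\Eff(M)=\la f,\delta\ra$ (from Lemma~\ref{lem:NefDiv1}) together with the indivisibility of $\delta$, and your write-up simply supplies the convex-geometry and primitivity details of that same argument.
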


Notice that the proof of Lemma~\ref{lem:NefDiv1} gives a result slightly stronger than the thesis, namely that all objects in $M$ are stable for all stability conditions $\sigma_{\alpha,0}$, $\alpha>\frac{1}{\sqrt{g-1}}$.
Next, we describe the divisorial contraction induced by $\lambda$ (see~\cite[Corollary 6.7]{Bayer:BN}).

\begin{lemm}\label{lem:DivContractionDiv1}
The class $\lambda$ induces a divisorial contraction
\[
\phi\colon M \to \overline{M}
\]
with exceptional divisor $\Delta$.
\end{lemm}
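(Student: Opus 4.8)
The plan is to identify the divisorial contraction abstractly from the lattice-theoretic data already assembled, and then verify that the contracted divisor is exactly $\Delta$. By Lemma~\ref{lem:NefDiv1} we know $\Nef(M)=\Mov(M)=\langle f,\lambda\rangle$, so $\lambda$ spans an extremal ray of the nef cone which lies on the boundary of the movable cone. The key input is Theorem~\ref{thm:walls}: the wall through $\lambda$ is governed by the class $\delta=\vartheta^{-1}(\cl(\Delta))=(1,0,1)$, which satisfies $\delta^2=-2$ and $(\delta,\lambda)=0$ (hence $\delta\in\vartheta(\delta^\perp)$ is the wall-defining class). Since $\delta^2=-2$ is the ``spherical'' case rather than the isotropic case $a^2=0$, Theorem~\ref{thm:walls} tells us the corresponding birational modification is a divisorial contraction rather than a flop. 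First I would invoke this to conclude that crossing the wall at $\lambda$ induces a divisorial contraction $\phi\colon M\to\overline M$, contracting precisely the locus swept out by the destabilized objects.

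Next I would identify the contracted divisor with $\Delta$. Because $(\delta,v)=((1,0,1),(0,h,0))=0$ and $\delta^2=-2$, the vector $\delta$ is the Mukai vector of a spherical object on the wall, and the exceptional divisor of the contraction is the Brill--Noether locus of objects $F$ admitting a nonzero map from (or to) the spherical object $R$ with $v(R)=\delta=(1,0,1)$. Concretely, $R=\cO_S$ up to the relevant twist, and the locus $\{[F]: \Hom(\cO_S,F)\neq 0\}=\{[F]:h^0(S,F)>0\}$ is exactly $\supp\Delta$ by \eqref{supposta} in Proposition~\ref{prop:Delta}. Since $\cl(\Delta)=\vartheta(1,0,1)=\delta$ and $\Delta$ is prime by Corollary~\ref{coro:NefDiv1}, the exceptional divisor of $\phi$ and $\Delta$ have the same (reduced, irreducible) support and the same class, so they coincide. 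I would cite \cite[Corollary 6.7]{Bayer:BN} for the precise matching, as flagged in the statement.

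The main obstacle I expect is confirming that the wall-crossing at $\lambda$ is genuinely a \emph{divisorial} contraction (as opposed to a flopping contraction to a small modification, or a fibration-type contraction) and that $\overline M$ is normal with relative Picard rank one, as required by the notion of divisorial contraction used in Section~\ref{sec:surgery}. This is exactly the dichotomy controlled by whether the wall is of ``Brill--Noether'' type (spherical class, $a^2=-2$) giving a divisorial contraction, versus ``Hilbert--Chow'' or ``Li--Gieseker--Uhlenbeck'' types. The relevant classification is carried out in \cite{BM:walls} and specialized to this setup in \cite{Bayer:BN}; the class $\delta$ with $\delta^2=-2$, $(\delta,v)=0$ puts us squarely in the divisorial (Brill--Noether) case, so the geometry of the contraction is the one described. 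I would therefore lean on \cite[Corollary 6.7]{Bayer:BN} for the structural assertion that $\phi$ is a divisorial contraction with normal target, reducing the lemma to the identification of the exceptional divisor already handled above.
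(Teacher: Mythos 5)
Your proposal is correct and follows essentially the same route as the paper: both arguments apply Theorem~\ref{thm:walls} (equivalently \cite[Theorem 5.7]{BM:walls}) to the wall through $\lambda$, identify the spherical class $\delta=(1,0,1)=v(\cO_S)$ with $(\delta,v)=0$ as the unique class cutting out that wall, and conclude that $\phi$ is a divisorial contraction of Brill--Noether type with exceptional divisor $\supp\Delta=\{[F]:h^0(S,F)>0\}$, citing \cite[Corollary 6.7]{Bayer:BN} exactly as the paper does. The only cosmetic difference is that the paper makes the uniqueness step explicit by enumerating all $a$ in the rank-two hyperbolic lattice $\mathcal{H}=\ZZ\delta+\ZZ v$ with $a^2\geq-2$ and $0\leq(a,v)\leq g-1$, whereas you absorb this into the appeal to Lemma~\ref{lem:NefDiv1} and the cited classification.
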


\begin{proof}
We use the notation from \cite[Section 5]{BM:walls}.
By~\cite[Proposition 5.1]{BM:walls} to each wall in the space of stability conditions there is an associated rank~2 primitive hyperbolic lattice $\mathcal{H}\subset H^*_\mathrm{alg}(S,\Z)$.
In our case, for the wall associated to the divisorial contraction $\varphi$, and hence to the class $\lambda$, this lattice is given by
\[
\mathcal{H}=\lambda^\perp = \Z\delta +\Z v.
\]
The behavior of $\varphi$ is determined by the existence of certain classes in $\mathcal{H}$ ~\cite[Theorem~5.7]{BM:walls}). More precisely, we are looking for classes $a\in\mathcal{H}$ such that $a^2\geq-2$ and $0\leq (a,v)\leq g-1$.
It is immediate to see that the only possibility is when $a=\pm \delta$.
Hence, the exceptional locus of the divisorial contraction $\phi$  is 
$\Delta$.
\end{proof}

Next we examine a stratifcation of $\Delta$.
Given $k\geq 1$, set
\begin{equation*}
\Delta(k) := \left\{ F \in M\, :\, h^0(S,F)=k \right\}.
\end{equation*}
Thus $\Delta$ is the disjoint union of the $\Delta(k)$'s.
A more precise result is the following.

\begin{lemm}\label{lemm:Delstratdiv1}
We have
\begin{equation}\label{stratdel}
\Delta=\bigsqcup\limits_{1\le k\le \lfloor \sqrt{g} \rfloor}\Delta(k),\qquad \ov{\Delta(i)}=\bigsqcup\limits_{i\le k\le \lfloor \sqrt{g}\rfloor}\Delta(k).
\end{equation}
Moreover let $\overline{\sigma}\in \Stab^{\mathrm{geom}}(\Db(S)) \subset\Stab^\dagger(\Db(S))$ be a stability condition such that $\ell_{\overline{\sigma}}=\lambda$. Then each $\Delta(k)$ is isomorphic to a locally trivial (in the analytic topology) Grassmannian bundle $\Gr(k,\cU_{2k})$ over the $(2g-2k^2)$--dimensional moduli space $M^\mathrm{st}_{\overline{\sigma}}(b_k)$ of \emph{stable} objects where $b_k = - (k, -h, k)$ and where $\cU_{2k}$ is a twisted vector bundle of rank $2k$ induced by the relative Ext-sheaf.
\end{lemm}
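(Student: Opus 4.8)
The plan is to realize the stratification of $\Delta$ through the wall-crossing picture underlying Lemma~\ref{lem:DivContractionDiv1}, following the method of Bayer~\cite{Bayer:BN}. The divisorial contraction $\phi\colon M\to\overline M$ attached to $\lambda$ is induced by the wall on which $\ell_{\overline\sigma}=\lambda$, and the only spherical class responsible for it is $v(\cO_S)=(1,0,1)$ (this is the Mukai vector with $\vartheta(1,0,1)=\delta$, exactly the class $a=\pm\delta$ isolated in the proof of Lemma~\ref{lem:DivContractionDiv1}). At $\overline\sigma$ the object $\cO_S$ is stable and, because $(v(\cO_S),v)=0$, it has the same phase as the objects of class $v$; the points of $\Delta$ are precisely those $F$ that become strictly semistable, i.e.\ those with $\Hom(\cO_S,F)\ne0$. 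Since $F=i_{C,*}\xi$ is a sheaf, $\Hom(\cO_S,F)=H^0(S,F)$, so the stratification of $\Delta$ by $\hom(\cO_S,F)$ coincides with the stratification by $h^0(S,F)=k$, recovering the $\Delta(k)$ and the support statement~\eqref{supposta}.

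For the bundle structure I would, for each $F\in\Delta(k)$, form the canonical evaluation sequence in the heart $\cA$ of $\overline\sigma$,
\[
0\to \cO_S\otimes \Hom(\cO_S,F)\to F\to B\to 0,
\]
whose cokernel $B$ is $\overline\sigma$-stable of Mukai vector $b_k=v-k\,v(\cO_S)=(-k,h,-k)=-(k,-h,k)$; note $b_k^2=2g-2-2k^2$, so $\dim M^\mathrm{st}_{\overline\sigma}(b_k)=b_k^2+2=2g-2k^2$. This yields the map $\Delta(k)\to M^\mathrm{st}_{\overline\sigma}(b_k)$, $F\mapsto B$, which can be viewed as $\phi|_{\Delta(k)}$ followed by the identification of its image in $\overline M$ (the class of $\cO_S^{\oplus k}\oplus B$) with $B$. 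To compute the fibre I would use the Mukai pairing: $(b_k,v(\cO_S))=2k$, hence $\chi(B,\cO_S)=-2k$; since $\cO_S$ and $B$ are non-isomorphic stable objects of the same phase, $\Hom(B,\cO_S)=\Hom(\cO_S,B)=0$, and Serre duality on $S$ gives $\dim\Ext^2(B,\cO_S)=\hom(\cO_S,B)=0$, so $\dim\Ext^1(B,\cO_S)=2k$. The object $F$ is recovered from $B$ as an extension $0\to\cO_S\otimes U\to F\to B\to0$ with $\dim U=k$, classified by a class in $\Ext^1(B,\cO_S)\otimes U$, i.e.\ a linear map $U^\vee\to\Ext^1(B,\cO_S)$; stability of $F$ off the wall forces this map to be injective, and modding out by $\GL(U)$ identifies the fibre with $\Gr(k,\Ext^1(B,\cO_S))=\Gr(k,2k)$. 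Globalising over $M^\mathrm{st}_{\overline\sigma}(b_k)$, the spaces $\Ext^1(B,\cO_S)$ assemble into the relative $\mathcal{E}xt^1$-sheaf $\cU_{2k}$, a twisted vector bundle of rank $2k$ (the twist being exactly the Brauer class obstructing a universal family), and $\Delta(k)\cong\Gr(k,\cU_{2k})$ as asserted.

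For the stratification~\eqref{stratdel}, the $\Delta(k)$ are disjoint and cover $\Delta$ by definition. The range $1\le k\le\lfloor\sqrt g\rfloor$ follows from the previous step: a point of $\Delta(k)$ produces a semistable $B$ of class $b_k$, forcing $b_k^2\ge-2$, i.e.\ $k^2\le g$; conversely $M^\mathrm{st}_{\overline\sigma}(b_k)\ne\emptyset$ precisely when $b_k^2\ge-2$, so each such stratum is nonempty. For the closure relations, the inclusion $\overline{\Delta(i)}\subseteq\bigsqcup_{k\ge i}\Delta(k)$ is the upper semicontinuity of $h^0$, i.e.\ the closedness of $\{F:h^0(S,F)\ge i\}$. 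For the reverse inclusion I would use $\dim\Delta(k)=\dim\Gr(k,2k)+\dim M^\mathrm{st}_{\overline\sigma}(b_k)=k^2+(2g-2k^2)=2g-k^2$, strictly decreasing in $k$, and show that every $\Delta(k)$ with $k>i$ lies in $\overline{\Delta(i)}$: given $B\in M^\mathrm{st}_{\overline\sigma}(b_k)$, one exhibits a one-parameter family inside $\{h^0\ge i\}$ degenerating a general point of $\Delta(i)$ to a point of $\Delta(k)$ by letting a rank-$i$ subspace of the relative $\Ext^1$ acquire the extra $\cO_S$-factors, equivalently by comparing dimensions and using irreducibility of $\{h^0\ge i\}$.

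The main obstacle, I expect, is the rigorous construction of the Grassmannian-bundle structure in families: proving that the canonical evaluation sequence is flat over $\Delta(k)$, that the cokernel $B$ is genuinely $\overline\sigma$-stable (not merely semistable) for \emph{every} $F\in\Delta(k)$ so that the classifying map truly lands in $M^\mathrm{st}_{\overline\sigma}(b_k)$, and that the relative $\mathcal{E}xt^1$ is locally free of constant rank $2k$ with the correct twist, so that $\Gr(k,\cU_{2k})$ is locally trivial in the analytic topology. A secondary difficulty is the hard inclusion in the closure relation, where the degeneration must be made precise. Both are controlled by the Bayer--Macr\`i wall-crossing machinery (\cite{BM:walls,Bayer:BN}), which governs the \'etale-local existence of universal families and the behaviour of the contraction $\phi$ along $\Delta$.
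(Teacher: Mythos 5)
Your proposal is correct and follows essentially the same route as the paper: the paper likewise characterizes $F\in\Delta(k)$ by its Jordan--H\"older filtration $\cO_S\otimes W^\vee\to F\to F_k$ at $\overline{\sigma}$ (your evaluation sequence), identifies the fibre with $\Gr(k,U_{2k})$ for $U_{2k}=\Hom(F_k,\cO_S[1])$ of dimension $2k$, obtains the bound $k\le\lfloor\sqrt g\rfloor$ from $\dim M^{\mathrm{st}}_{\overline\sigma}(b_k)=2g-2k^2$, and deduces the closure relations from the strata having expected determinantal codimension $k^2$. The only cosmetic difference is that you encode the extension data as an injective map $U^\vee\to\Ext^1(B,\cO_S)$ rather than as a subspace $W\subset U_{2k}$ obtained by applying $\Hom(\blank,\cO_S)$ to the triangle.
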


\begin{proof}
We describe each $\Delta(k)$ as a Grassmannian bundle over $M^\mathrm{st}_{\overline{\sigma}}(b_k)$. 
Explicitly, if we choose $\overline{\sigma}=\sigma_{\alpha,\beta}$ with $(\alpha,\beta)$ sufficiently close to $(1/\sqrt{g-1},0)$ and $\beta<0$, then $F\in \Delta(k)$ if and only if its Jordan--H\"older filtration with respect to $\overline{\sigma}$ is
\begin{equation}\label{eq:DivContractionDiv1}
\cO_S\otimes W^\vee \to F \to F_k,
\end{equation}
where $F_k\in M^\mathrm{st}_{\overline{\sigma}}(b_k)$, $\dim W=k$, $U_{2k}:=\Hom(F_k,\cO_S[1])$, and where $W$ is viewed as a subspace of $U_{2k}$ by applying $\Hom(\blank, \cO_S)$ to the triangle above and using the fact that $\Hom(F,\cO_S)=0$. The twisted vector bundle $\cU_{2k}$ has fiber equal to $U_{2k}$ over the point $F_k\in M^\mathrm{st}_{\overline{\sigma}}(b_k)$. The upper bound $k\le \lfloor \sqrt{g}  \rfloor$ follows because the moduli space has dimension $(2g-2k^2)$, and the second equation in~\eqref{stratdel} holds because each stratum has the expected codimension, as determinantal subvariety.
\end{proof}

Finally, we look at the involution $\tau$ on $M$; recall from Section \ref{subsec:examples} that it is induced by the functor $\D$ and acts fiberwise with respect to the Lagrangian fibration $\pi$. 
Let $\mathrm{ST}_{\cO_S}\colon\Db(S)\xrightarrow{\cong}\Db(S)$ be the spherical twist at $\cO_S$.

\begin{lemm}\label{lem:InvolutionDiv1}
We keep the notation of Lemma~\ref{lem:DivContractionDiv1}.
The involution $\tau$ on $M$ maps each  $\Delta(k)$ to itself and is compatible with the Grassmannian bundle $\Delta(k) \to M^\mathrm{st}_{\overline{\sigma}}(b_k)$, with the involution on $M^\mathrm{st}_{\overline{\sigma}}(b_k)$ induced by the anti-autoequivalence $\Phi\colon\Db(S)^{\mathrm{op}}\xrightarrow{\cong}\Db(S)$ given by $\Phi:=\mathrm{ST}_{\cO_S}\circ\D$.
In particular $\tau$  induces a regular involution $\ov{\tau}\colon\ov{M}\xrightarrow{\cong}\ov{M}$.
\end{lemm}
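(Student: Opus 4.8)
The plan is to read the involution off the Jordan--Hölder description of Lemma~\ref{lemm:Delstratdiv1} by applying the duality $\D$ to the defining triangle and then correcting the result by the spherical twist $\mathrm{ST}_{\cO_S}$. First I would check that $\tau$ preserves each stratum. Since $\tau=\D$ on objects, with $\D^2\cong\mathrm{id}$ and $\D(\cO_S)=\cO_S[1]$, adjunction gives $\Hom(\cO_S,\tau F)\cong\Hom(\D\D F,\D\cO_S)=\Hom(F,\cO_S[1])=\Ext^1(F,\cO_S)\cong H^1(S,F)^\vee$ by Serre duality. As $\chi(F)=0$ forces $h^1(S,F)=h^0(S,F)$, we get $h^0(S,\tau F)=h^0(S,F)$, hence $\tau(\Delta(k))=\Delta(k)$.

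Next I would identify the induced map on the base $M^{\mathrm{st}}_{\ov\sigma}(b_k)$. At the level of Mukai vectors, $\D$ acts by $(r,c,s)\mapsto(-r,c,-s)$ and $\mathrm{ST}_{\cO_S}$ by the reflection $x\mapsto x+(x,v(\cO_S))\,v(\cO_S)$ in $v(\cO_S)=(1,0,1)$; a direct computation gives $\Phi(b_k)=b_k$, so $\Phi$ at least preserves the class. Applying $\D$ to $\cO_S\otimes W^\vee\to F\to F_k$ and rotating yields the triangle $\cO_S\otimes W\xrightarrow{u}\D(F_k)\to\tau F$, where $u$ corresponds to the inclusion $W\hookrightarrow\Hom(\cO_S,\D(F_k))$. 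A short Serre-duality computation, using $\chi(F_k,\cO_S)=-(b_k,v(\cO_S))=-2k$ together with $\Ext^1(F_k,\cO_S)=U_{2k}$ and $\Hom(F_k,\cO_S)=\Ext^2(F_k,\cO_S)=0$, shows $\mathbf{R}\Hom(\cO_S,\D(F_k))=U_{2k}$ concentrated in degree $0$. Thus $u$ is exactly the evaluation restricted to $W\subset U_{2k}$, while the spherical-twist triangle reads $U_{2k}\otimes\cO_S\xrightarrow{\eval}\D(F_k)\to\Phi(F_k)$.

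Now the octahedral axiom applied to the composite $\cO_S\otimes W\hookrightarrow U_{2k}\otimes\cO_S\xrightarrow{\eval}\D(F_k)$ produces a triangle $\cO_S\otimes(U_{2k}/W)\to\tau F\to\Phi(F_k)$. Since $\tau F$ and $\cO_S\otimes(U_{2k}/W)$ lie in $\cA$, so does $\Phi(F_k)$, and we obtain a short exact sequence in $\cA$ with sub $\cO_S^{\oplus k}$ and quotient $\Phi(F_k)$ of primitive class $b_k$. Comparing with the Jordan--Hölder filtration of $\tau F\in\Delta(k)$ supplied by Lemma~\ref{lemm:Delstratdiv1} identifies $\Phi(F_k)$ with the stable factor $F'_k\in M^{\mathrm{st}}_{\ov\sigma}(b_k)$ (in particular $\Phi(F_k)$ is $\ov\sigma$-stable). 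Hence the projection $F\mapsto F_k$ intertwines $\tau$ with $\Phi$, and on the Grassmannian fibers $\tau$ acts by $W\mapsto(U_{2k}/W)^\vee=W^{\perp}$, i.e.\ the natural duality $\Gr(k,\cU_{2k})\cong\Gr(k,\cU_{2k}^\vee)$ over $\Phi$; this is the asserted compatibility. Finally, since $\lambda=\vartheta(1,0,-1)$ is $\tau$-invariant (Proposition~\ref{azione}) and $\phi\colon M\to\ov M$ is the contraction attached to $\lambda$ (Lemma~\ref{lem:DivContractionDiv1}), $\tau$ acts on $\bigoplus_m H^0(M,m\lambda)$ and descends to a regular involution $\ov\tau\in\Aut(\ov M)$ with $\phi\circ\tau=\ov\tau\circ\phi$.

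The main obstacle is the middle step: confirming that the derived dual of the stable factor, once corrected by the spherical twist, is again an $\ov\sigma$-\emph{stable} object of class $b_k$, and that the octahedral triangle is genuinely the Jordan--Hölder sequence of $\tau F$ in $\cA$. This is precisely what dictates the choice $\Phi=\mathrm{ST}_{\cO_S}\circ\D$: dualizing creates a $2k$-dimensional space of maps to $\cO_S[1]$, and the twist is exactly the correction that removes it and lands the object back in the heart. Rather than matching stability conditions directly, I would deduce $\ov\sigma$-stability of $\Phi(F_k)$ a posteriori from $\tau(\Delta(k))=\Delta(k)$ and the primitivity of $b_k$, which makes the identification $\Phi(F_k)=F'_k$ unambiguous.
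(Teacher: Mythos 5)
Your proposal is correct and follows essentially the same route as the paper: the stratum--preservation via $\chi(\cO_S,F)=0$, the application of $\D$ to the Jordan--H\"older triangle, the identification $\mathbf{R}\Hom(\cO_S,\D(F_k))\cong U_{2k}$, and the resulting triangle $\cO_S\otimes(U_{2k}/W)\to\D(F)\to\Phi(F_k)$ are exactly the paper's steps, with your octahedral-axiom argument playing the role of the paper's $3\times 3$ diagram of spherical-twist triangles. The only cosmetic slip is the claim that $\Phi(F_k)$ lies in $\cA$ merely because the other two terms of the triangle do; your a posteriori identification of $\Phi(F_k)$ with the stable factor of $\tau F$ repairs this and matches the level of detail in the paper.
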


\begin{proof}
Let $F\in\Delta(k)$.
Since $\chi(\cO_S,F)=0$, the dimensions of $\Hom(\cO_S,F)$ and $\Hom(\cO_S,\D(F))$ are the same.
Hence, by definition, $\tau$ maps each $\Delta(k)$ to itself.

To describe explicitly the involution induced on $M^\mathrm{st}_{\overline{\sigma}}(b_k)$ in terms of the functor $\Phi$, we simply apply the functors $\D$ and $\Phi$ to \eqref{eq:DivContractionDiv1}:
\begin{equation}\label{eq:CommutativeDiagramPhi}
\begin{gathered}
\xymatrix{
\cO_S\otimes K \ar[r]\ar[d]^{\mathrm{ev}} & \cO_S\otimes U \ar[r]\ar[d]^{\mathrm{ev}} & \cO_S\otimes Q\ar[d]^{\mathrm{ev}}\\
\cO_S\otimes W\ar[r]\ar[d] & \D(F_k) \ar[r]\ar[d] & \D(F)\ar[d]\\
\Phi(\cO_S\otimes W^\vee [1])\ar[r] & \Phi(F_k) \ar[r] & \Phi(F)
}
\end{gathered}
\end{equation}
for the graded vector spaces

\[
\begin{aligned}
K:=&\mathbf{R}\Hom(\cO_S,\cO_S)\otimes W,\\ U:=&\mathbf{R}\Hom(\cO_S,\D(F_k)),\\  Q:=&\mathbf{R}\Hom(\cO_S,\D(F)).
\end{aligned}
\]
By using the isomorphism $U\cong U_{2k}$, we can identify the first line of \eqref{eq:CommutativeDiagramPhi} with
\[
\cO_S\otimes \left(W \oplus W[-2]\right) \longrightarrow \cO_S\otimes U_{2k} \longrightarrow \cO_S\otimes \left(U_{2k}/W \oplus W[-1]\right),
\]
and so we deduce the exact triangle
\[
\cO_S\otimes \left(U_{2k}/W\right) \to \D(F) \to \Phi(F_k),
\]
as we wanted. Notice that this is nothing but the Mukai involution in divisibility~1, studied in~\cite{Bea:MukaiInvolution,Kieran:Involutions}.
\end{proof}

\begin{rema}\label{rmk:Section2appliesDiv1}
Observe that the moduli space $M$, the involution $\tau$, and the  contraction $\phi\colon M\rightarrow \overline{M}$ satisfy the requirements of Section~\ref{sec:surgery}:  \ref{enum:surgery1} is Proposition~\ref{azione}, \ref{enum:surgery2} and \ref{enum:surgery3} are by construction,
and \ref{enum:surgery5} follows from Lemma~\ref{lem:InvolutionDiv1}. Hence in the divisibility 1 case, the specialization we consider for the proof of the Main Theorem is the pair $(\overline{M}, \overline{\lambda})$, where $\overline{\lambda}$ denotes the ample Cartier divisor class induced by $\lambda$ on $\overline{M}$, equipped with the involution $\overline{\tau}$.
\end{rema}

\subsection{Divisibility 2}\label{subsec:div2}

Let $(S,h)$ be a polarized K3 surface of genus $g$ with $\NS(S)=\Z \cdot h$. Throughout the present subsection we assume that the genus is divisible by~4. Let 
$v = \left(0,h,1-g\right)$ and let $M:=M_h(v)$.

Let
\begin{equation*}
f = \vartheta\left(0,0,-1\right),\quad 
\delta= \vartheta\left(2,-h,\frac{g}{2}\right)=\cl(\Delta),\quad 
\lambda =\vartheta \left(2,-h,\frac{g}{2}-1\right)=f+\delta,
\end{equation*}
where $\Delta$ is the divisor defined in Proposition~\ref{prop:Deltadiv2}. Notice that $\lambda$ has square $2$ and divisibility $2$.
Recall from Section~\ref{subsec:examples} that the class $f$ induces the Lagrangian fibration
\[
\pi\colon M \to |\cO_S(H)|\cong \P^g,
\]
and is thus one of the two rays of the nef cone.

In order to describe the wall and chamber decomposition for the moduli spaces $M$, we introduce two integers $c,d\in\Z$ with $c\geq0$, $d\geq-1$, and satisfying the following two conditions:
\begin{enumerate}[(a)]
\item\label{enum:div2Nef1} $4 d + (2c+1)^2 \leq g-1$
\item\label{enum:div2Nef2} $\frac{(g-1)c^2-d}{2c+1} \in \Z$.
\end{enumerate}
Then we set
\[
\mu_{c,d} := \frac{g-1-4d-(2c+1)^2}{2(2c+1)^2} \geq 0
\]
and
\begin{align*}
& a_{c,d} := \left(2c+1, -c\cdot h, \frac{(g-1)c^2-d}{2c+1} \right)\\
& \widetilde{a}_{c,d}:=\lambda + \mu_{c,d} f.
\end{align*}
Notice that by definition $d=a_{c,d}^2/2$, $(\vartheta^{-1}(\widetilde{a}_{c,d}),a_{c,d})=0$, and by~\ref{enum:div2Nef2} $a_{c,d} \in H^*_\mathrm{alg}(S,\Z)$. We define an ordering on the pairs $(c,d)$ as above by using the slope $\mu_{c,d}$:
\[
(c,d)\preceq(c',d')\quad \text{ if and only if }\quad \mu_{c,d}\leq \mu_{c',d'}.
\]
With respect to this ordering, the first values in descending order are:
\[
(0,-1) \succeq (0,0) \succeq \dots
\]

The Mori chamber decomposition of the movable cone is more interesting than in the divisibility~1 case discussed in the previous subsection.

\begin{lemm}\label{lem:NefDiv2}
The movable cone for $M$ is
\[
\Mov(M) = \R_{\geq0} f+ \R_{\geq0} \lambda.
\]
The ordered rays generated by $\widetilde{a}_{c,d}$, where $c,d\in\Z$, $c\geq0$, $d\geq-1$, satisfy \ref{enum:div2Nef1} and \ref{enum:div2Nef2}, give the chamber decomposition of $\Mov(M)$.
\end{lemm}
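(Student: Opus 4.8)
The plan is to deduce both statements from Theorem~\ref{thm:walls}, working entirely inside the rank-$2$ lattice $\NS(M)=\vartheta(v^\perp_{\mathrm{alg}})$, where $v^\perp_{\mathrm{alg}}$ is the orthogonal of $v=(0,h,1-g)$ in $\langle(1,0,0),(0,h,0),(0,0,1)\rangle$. First I would record the elementary lattice data. A direct computation with Mukai's pairing gives $f^2=0$, $\lambda^2=2$, $\delta^2=-2$, $(f,\lambda)=2$, $(\lambda,\delta)=0$, $(f,\delta)=2$, and, for an algebraic class $a=(r,eh,s)$, the formula $(a,v)=(g-1)(2e+r)$. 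Since $v^2=2g-2>0$, the lattice $v^\perp_{\mathrm{alg}}$ has signature $(1,1)$, so $\mathrm{Pos}(M)$ is a two-dimensional cone bounded by two isotropic rays, one of which is $f$ (nef and isotropic, hence a boundary ray, by the Lagrangian fibration discussion of Section~\ref{subsec:examples}), while $\lambda$, having positive square, lies in the interior.

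For the movable cone I would invoke Theorem~\ref{thm:walls}\ref{enum:walls2}. The isotropic walls with $(a,v)\in\{1,2\}$ contribute nothing, since $(a,v)\in(g-1)\ZZ$ and $g-1\ge3$; the spherical walls with $a^2=-2$ and $(a,v)=0$ force, via $e\bigl(e(g-1)+2s\bigr)=-1$, that $a=\pm\delta$. Hence the only wall meeting $\mathrm{Pos}(M)$ is $\vartheta(\delta^\perp)$, which is the ray through $\lambda$ (as $(\lambda,\delta)=0$). The connected component of $\mathrm{Pos}(M)\setminus\vartheta(\delta^\perp)$ containing the ample classes is therefore bounded by $f$ on one side (as $f$ is nef) and by $\lambda$ on the other, giving $\Mov(M)=\langle f,\lambda\rangle$.

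For the stable base locus decomposition I would apply Theorem~\ref{thm:walls}\ref{enum:walls3}: the walls cutting the movable cone into the ample cones of the birational models are the $\vartheta(a^\perp)$ with $a^2\ge-2$ and $0\le(a,v)\le v^2/2=g-1$. Using $(a,v)=(g-1)(2e+r)$, this constraint forces $2e+r\in\{0,1\}$. In the case $2e+r=0$ one has $a\in v^\perp_{\mathrm{alg}}$; parametrizing a ray as $\lambda+\mu f$ I would check that such classes produce only boundary rays of the positive cone (the spherical one gives $\lambda$, the two isotropic classes give $f$ and the opposite isotropic ray, and positive-square classes give no wall), so this case contributes no interior wall. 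In the case $2e+r=1$, after replacing $a$ by $-a$ if necessary to arrange $r>0$, I would set $c:=-e\ge0$ and $d:=a^2/2$; then $a=a_{c,d}$, integrality of the last coordinate is exactly condition~\ref{enum:div2Nef2}, the inequality $a^2\ge-2$ is $d\ge-1$, and $a_{c,d}$ is automatically primitive since $(a,v)=g-1$ while every algebraic class pairs with $v$ into $(g-1)\ZZ$. Solving $(\lambda+\mu f,a_{c,d})=0$ yields $\mu=\mu_{c,d}$, so the wall is the ray through $\widetilde a_{c,d}=\lambda+\mu_{c,d}f$, which lies in $\langle f,\lambda\rangle$ precisely when $\mu_{c,d}\ge0$, i.e.\ condition~\ref{enum:div2Nef1}. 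Ordering these rays by $\mu_{c,d}$ (equivalently by $\preceq$) then gives the asserted decomposition, since by Theorem~\ref{thm:walls} the $\vartheta(a^\perp)$ meeting the interior are exactly the walls of the Mori chamber decomposition, which inside the movable cone coincides with the stable base locus decomposition.

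The main obstacle, and the step requiring the most care, is the wall enumeration of the last paragraph: one must verify that the classes with $(a,v)=0$ genuinely produce no wall interior to $\langle f,\lambda\rangle$ (so no spurious walls are introduced) and that the bookkeeping between the integer pair $(c,d)$ and the primitive class $a_{c,d}$ is a bijection onto the interior walls, matching the divisibility constraints~\ref{enum:div2Nef1}--\ref{enum:div2Nef2} exactly. Everything else is formal once the pairings $f^2=0$, $\lambda^2=2$, $\delta^2=-2$ and the identity $(a,v)=(g-1)(2e+r)$ are in hand.
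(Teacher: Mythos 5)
Your proposal is correct and follows essentially the same route as the paper's proof: apply Theorem~\ref{thm:walls}, use $(a,v)\in(g-1)\ZZ$ to reduce to the two cases $(a,v)=0$ (which via $e(e(g-1)+2s)=-1$ gives $a=\pm\delta$ and the boundary ray through $\lambda$) and $(a,v)=g-1$ (which gives the vectors $a_{c,d}$ and the interior walls $\vartheta(\widetilde{a}_{c,d})$); your extra verification that isotropic and positive-square classes in $v^\perp$ cut no interior wall is a point the paper leaves implicit. The one slip is in your normalization to $r>0$: replacing $a$ by $-a$ sends $(a,v)=g-1$ to $-(g-1)$ and the resulting vector is \emph{not} of the form $a_{c,d}$ (its rank is $2e-1$, not $2c+1$ for $c=-e$), so the identification "$a=a_{c,d}$" fails for the classes with $r=1-2e<0$. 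The correct symmetry is $a\mapsto v-a$, which preserves $(a,v)=g-1$ and $a^2$, induces the same wall on $\NS(M)=\vartheta(v^\perp_{\mathrm{alg}})$ (since $(w,v)=0$ there), and carries the classes with $e\ge1$ to $a_{c',d}$ with $c'=e-1\ge0$ with matching integrality condition; with that substitution your enumeration is exactly the paper's (whose own proof is silent on this normalization).
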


\begin{proof}
This is an immediate calculation from Theorem~\ref{thm:walls}. Indeed, we look for Mukai vectors $a\in H_\mathrm{alg}^*(S,\Z)$ satisfying $a^2\geq-2$, $0\leq (a,v)\leq g-1$.
There are only two possibilities for $(a,v)$: either $(a,v)=0$ (and so, $a^2=-2$), or $(a,v)=g-1$.
The first case gives the ray generated by $\lambda$.
For the second case, if we write $a=(r,-c\cdot h,s)$, then the condition $(a,v)=g-1$ gives $r=2c+1$ and, by setting $d:=a^2/2$, we can directly express $s$ as a function of $c$ and $d$; hence, these $a$ are the vectors $a_{c,d}$, whose corresponding ray in the positive cone $\Pos(M)$ is exactly the one generated by $\widetilde{a}_{c,d}$. The fact that this is inside $\Mov(M)$ corresponds to the positivity of its slope $\mu_{c,d}$, which is guaranteed by \ref{enum:div2Nef1}.
\end{proof}

The birational maps $f_{c,d}$ corresponding to the vectors $a_{c,d}$ in Lemma~\ref{lem:NefDiv2} can be explicitly described and have a very simple form: they are all Mukai flops over products of moduli spaces of stable objects.

\begin{lemm}\label{lem:MukaiFlops}
For $c,d\in\Z$ as above, the divisor class $\widetilde{a}_{c,d}$ induces a Mukai flop whose exceptional locus is the projective bundle, in the analytic topology,
\[
P_{c,d}:=\PP_{Z_{c,d}}(\mathcal{V}_{c,d})
\]
where
\[
Z_{c,d} = M_{\sigma_{c,d}}(a_{c,d}) \times M_{\sigma_{c,d}}(v-a_{c,d})
\]
is a product of smooth HK manifolds of dimension $2d+2$, where $\mathcal{V}_{c,d}$ is the twisted vector bundle of rank $(a_{c,d},v-a_{c,d})$ induced by the relative Ext-sheaf, and where $\sigma_{c,d}\in \Stab^{\mathrm{geom}}(\Db(S))\subset\Stab^\dagger(\Db(S))$ is a stability condition associated to $\widetilde{a}_{c,d}$.
\end{lemm}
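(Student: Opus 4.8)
The plan is to reduce the entire statement to the wall-crossing analysis of Example~\ref{ex:rk2}, applied to the vertical line $\beta=-\tfrac12$ in $\Stab^{\mathrm{geom}}(\Db(S))$. First I would check that $v=(0,h,1-g)$ fits the hypotheses of Example~\ref{ex:rk2} with $x=2$, $y=-1$: here $xc_0-yr_0=2$, so along this line $\tfrac1\alpha\Im Z_{\alpha,-1/2}(v)=(g-1)(2\cdot 1+0)=2(g-1)$ is the minimal positive \emph{even} value, while any destabilizing factor must have imaginary charge exactly $g-1$. By the remark closing Section~\ref{subsec:Bridgeland}, for $\alpha\gg0$ one has $M_{\sigma_{\alpha,-1/2}}(v)\cong M_h(v)=M$, with $\ell_{\alpha,-1/2}\to f$ as $\alpha\to\infty$; moreover $\sigma_{\alpha,-1/2}$ is a genuine stability condition for $\alpha>\tfrac{1}{2\sqrt{g-1}}$, the endpoint being where $Z_{\alpha,-1/2}$ vanishes on the spherical class $(2,-h,\tfrac g2)$. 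As $\alpha$ decreases through this range, $\ell_{\alpha,-1/2}$ sweeps the interior of $\Mov(M)$ from $f$ toward $\lambda$, and by Example~\ref{ex:rk2} each wall encountered is a Mukai flop with exceptional locus $\PP_Z(\mathcal{V})$, where $Z=M_{\sigma}(v(F))\times M_{\sigma}(v(Q))$ for the destabilizing sequence $0\to F\to E\to Q\to0$ and $\mathcal{V}$ is the twisted $\Ext^1$-bundle of rank $(v(F),v(Q))$.

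The key step is to match these walls with the rays $\vartheta(\widetilde{a}_{c,d})$ of Lemma~\ref{lem:NefDiv2} and to identify the factors. A factor $F=(r_F,c_Fh,s_F)$ destabilizing along $\beta=-\tfrac12$ satisfies $2c_F+r_F=1$; writing $c_F=-c$ gives $r_F=2c+1$, and imposing $v(F)^2=2d$ forces $s_F=\frac{(g-1)c^2-d}{2c+1}$, so that $v(F)=a_{c,d}$ and $v(Q)=v-a_{c,d}$ (after exchanging $F$ and $Q$ we may take $c\ge0$). The integrality of $s_F$ is precisely condition~\ref{enum:div2Nef2}, and the requirement that the wall lie in the interior of $\Mov(M)$, equivalently $\mu_{c,d}>0$, is condition~\ref{enum:div2Nef1}. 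Conversely, since $(\widetilde{a}_{c,d},a_{c,d})=0$, the piecewise-analytic map $\ell$ of Theorem~\ref{thm:walls} sends the wall on which $a_{c,d}$ destabilizes $v$ exactly to the ray $\vartheta(\widetilde{a}_{c,d})$, so the two indexings agree.

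With the factors identified I would read off the remaining assertions. Both $a_{c,d}=(2c+1,-ch,\ast)$ and $v-a_{c,d}=(-(2c+1),(c+1)h,\ast)$ are primitive, since $\gcd(2c+1,c)=\gcd(2c+1,c+1)=1$, and each has the minimal imaginary charge $g-1$; hence by the argument of Example~\ref{ex:rk1} neither admits a wall along $\beta=-\tfrac12$, so the wall value $\sigma_{c,d}:=\sigma_{\alpha_{c,d},-1/2}\in\Stab^{\mathrm{geom}}(\Db(S))$ is generic with respect to both and is associated to $\widetilde{a}_{c,d}$. By Theorem~\ref{thm:Moduli} the spaces $M_{\sigma_{c,d}}(a_{c,d})$ and $M_{\sigma_{c,d}}(v-a_{c,d})$ are then smooth projective hyperk\"ahler manifolds of dimension $a_{c,d}^2+2=2d+2$, giving $Z_{c,d}$. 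Finally, for stable $F,Q$ of equal phase with distinct Mukai vectors one has $\Hom(Q,F)=\Hom(F,Q)=0$, so $\dim\Ext^1(Q,F)=-\chi(Q,F)=(a_{c,d},v-a_{c,d})=g-1-2d$, which is the rank of $\mathcal{V}_{c,d}$.

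The hard part will be the matching in the second paragraph: I must show that letting $\alpha$ decrease along $\beta=-\tfrac12$ really sweeps the whole interior of $\Mov(M)$, with $\ell_{\alpha,-1/2}$ meeting the rays $\vartheta(\widetilde{a}_{c,d})$ in the order prescribed by $\mu_{c,d}$ and with no extraneous walls. The endpoint $\alpha=\tfrac{1}{2\sqrt{g-1}}$ requires separate care: it corresponds to the boundary ray $\lambda$ and the \emph{divisorial} contraction governed by the $(-2)$-class $(2,-h,\tfrac g2)$, which is orthogonal to $v$ and, having even rank, is never of the form $a_{c,d}$. Here the hypothesis $g\equiv0\pmod4$ is essential: a parity count gives $4d+(2c+1)^2\equiv1\not\equiv3\equiv g-1\pmod4$, so condition~\ref{enum:div2Nef1} is never an equality, $\mu_{c,d}>0$ for every admissible $(c,d)$, and the divisorial wall stays cleanly separated from the flopping walls treated here.
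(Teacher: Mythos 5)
Your proposal is correct and follows essentially the same route as the paper: the paper's proof also reduces everything to Example~\ref{ex:rk2} with $x=2$, $y=-1$, and uses the limits of $\ell_{\alpha,-1/2}$ (namely $f$ as $\alpha\to\infty$ and $\lambda$ as $\alpha\to\tfrac{1}{2\sqrt{g-1}}$) to see that the segment $\sigma_{\alpha,-1/2}$ sweeps the whole movable cone, so that every wall is one of the Mukai flops described there. The extra verifications you supply (matching the destabilizing vectors with $a_{c,d}$, primitivity, the dimension and rank counts, and the parity argument separating the divisorial wall) are all correct and consistent with what the paper delegates to Lemma~\ref{lem:NefDiv2} and Example~\ref{ex:rk2}.
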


\begin{proof}
We use Example~\ref{ex:rk2}.
Indeed $v=(0,h,1-g)$ satisfies all the assumptions with $x=2$ and $y=-1$.
By \cite[Lemma 9.2]{BM:projectivity}, the associated divisor class $\ell_{\sigma_{\alpha,-1/2}}$ on $M$ satisfies 
\begin{align*}
&\ell_{\sigma_{\alpha,-1/2}} \to f,\qquad \text{for } \alpha \to \infty\\
&\ell_{\sigma_{\alpha,-1/2}} \to \lambda,\qquad \text{for } \alpha \to \frac{1}{2\sqrt{g-1}},
\end{align*}
up to multiplicative constants.
Hence, all walls in the movable cone will come from a stability condition in this segment $\sigma_{\alpha,-1/2}$, for $\alpha > \frac{1}{2\sqrt{g-1}}$, thus finishing the proof.
\end{proof}

\begin{rema}\label{rmk:TotallySemistAndExplicitModels}
The above proof allows us to say a bit more about birational models of $M$ and the structure of stable objects.

(a) We first notice that there are no totally semistable walls (in the sense of \cite[Definition 2.20]{BM:walls}) for the Mukai vector $v$ on the segment $\sigma_{\alpha,-1/2}$, $\alpha>\frac{1}{2\sqrt{g-1}}$; concretely, this means that the general object $E$ in $M$ is $\sigma_{\alpha,-1/2}$-stable, for all $\alpha>\frac{1}{2\sqrt{g-1}}$.
Indeed, by using \cite[Theorem 5.7]{BM:walls} we see that in our case such a totally semistable wall exists if and only if there exist $\alpha_0>\frac{1}{2\sqrt{g-1}}$ and a $\sigma_{\alpha_0,-1/2}$-stable spherical object $R$ in $\coh^{-1/2}(S)$ which satisfies $(v(R),v)<0$ and is a Jordan-H\"older factor for a general element $E$ in $M$ on $\sigma_{\alpha_0,-1/2}$.
Example~\ref{ex:rk2} then forces $(v(R),v)=-1$, which is impossible, being a multiple of $1-g$ with $4\,|\,g$.

(b) Given $c,d\in \ZZ$ as above, we let
\[
\alpha_{(c,d)} > \frac{1}{2\sqrt{g-1}}
\]
be such that the divisor class $\ell_{\sigma_{\alpha_{(c,d)},-1/2}}$ is equal to the class $\widetilde{a}_{c,d}$, up to a positive constant (namely, we can choose $\sigma_{c,d}$ as $\sigma_{\alpha_{(c,d)},-1/2}$).
We define for $0<\epsilon\ll 1$:
\begin{equation*}
    \begin{split}
        &M_{c,d}:= M_{\sigma_{\alpha_{(c,d)}+\epsilon,-1/2}}(v),\\
        &M_{c,d}':= M_{\sigma_{\alpha_{(c,d)}-\epsilon,-1/2}}(v),\\
        &M_{\mathrm{last}}:=M_{\sigma_{\frac{1}{2\sqrt{g-1}}+\epsilon,-1/2}}(v).
    \end{split}
\end{equation*}
Notice that $M_{\mathrm{last}}$ is equal to $M'_{c_0,d_0}$ where $(c_0,d_0)$ is the minimum $(c,d)$ for the total ordering that we have defined above. Moreover
\[
M_{0,-1}=M \quad \text{ and } \quad M_{0,-1}'=M_{0,0},
\]
$\widetilde{a}_{c,d}$ is nef on $M_{c,d}$, and $\lambda$ is nef on $M_{\mathrm{last}}$.

For $E\in M_{c,d}$, we have that $E$ is in the projective bundle $P_{c,d}$ if and only if its Jordan-H\"older filtration with respect to $\sigma_{\alpha_{(c,d)},-1/2}$ has only two terms of the form
\begin{equation}\label{eq:MukaiflopJH}
R \to E \to R'
\end{equation}
with $R\in M_{\sigma_{\alpha_{(c,d)},-1/2}}(a_{c,d})$ and $R'\in M_{\sigma_{\alpha_{(c,d)},-1/2}}(v-a_{c,d})$.
The twisted vector bundle $\mathcal{V}_{c,d}$ then has fiber $\Ext^1(R',R)$ over the point $(R,R')$.
\end{rema}

\begin{exam}\label{ex:FanoCptFirstFlop}
We can describe the first flop, induced by $\widetilde{a}_{0,-1}$ as follows.
The exceptional locus of the flop consists of those $E=\mathcal{O}_C$, for $C\in|\cO_S(H)|$, and thus can be identified with the image of the zero section $\check{\PP}^g:=\PP(S,H^0(\cO_S(H)))$.
The Jordan-H\"older filtration with respect to $\sigma_{\alpha_{(0,-1)},-1/2}$ is given by
\[
\cO_S \to E \to \cO_S(-H)[1].
\]
On the other side of the flop, the objects are given as non-trivial extensions of the form
\[
\cO_S(-H)[1]\to E' \to \cO_S,
\]
which are then parametrized by $\PP(S,H^2(\cO_S(-H)))=\PP(H^0(S,\cO_S(H))^\vee)=\PP^g$.
\end{exam}

As in the divisibility~1 case, we have an explicit description of the divisorial contraction induced by $\lambda$.
Let $A$ be the (unique) spherical stable vector bundle on $S$ with Mukai vector $v(A)=(2,-h,g/2)$, as in~\eqref{eq:eccoA}.
For $c,d\in\ZZ$ as above and $k\ge 1$, we let
\begin{equation}\label{deltacidi}
\begin{split}
\Delta_{c,d} & := \left\{ F \in M_{c,d}\, :\, \hom_S(A,F)>0 \right\},\\
\Delta_{c,d}(k) & := \left\{ F \in M_{c,d}\, :\, \hom_S(A,F)=k \right\},
\end{split}
\end{equation}
and similarly on $M_{\mathrm{last}}$
\begin{equation*}
\begin{split}
\Delta_{\mathrm{last}} & := \left\{ F \in M_{\mathrm{last}}\, :\, \hom_S(A,F)>0 \right\},\\
\Delta_{\mathrm{last}}(k) & := \left\{ F \in M_\mathrm{last}\, :\, \hom_S(A,F)=k \right\}.
\end{split}
\end{equation*}
Notice that $\Delta_{c,d}$, respectively $\Delta_{\mathrm{last}}$, is the strict transform of $\Delta$ on $M_{c,d}$, respectively $M_{\mathrm{last}}$.

An argument analogous to the one given for Lemma~\ref{lem:DivContractionDiv1} and Lemma~\ref{lemm:Delstratdiv1} gives the following two results.

\begin{lemm}\label{lem:DivContractionDiv2}
The class $\lambda$ induces a divisorial contraction
\[
\phi\colon M_\mathrm{last} \to \overline{M}\]
with  exceptional divisor $\Delta_\mathrm{last}$. 
\end{lemm}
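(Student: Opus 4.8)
The plan is to transcribe the proof of Lemma~\ref{lem:DivContractionDiv1} to the divisibility~2 setting, deducing the statement from Theorem~\ref{thm:walls} together with \cite[Section~5]{BM:walls}. First I would identify the rank~$2$ primitive hyperbolic lattice $\mathcal{H}\subset H^*_\mathrm{alg}(S,\Z)$ attached to the wall on which the nef class $\lambda$ sits. Since the Lazarsfeld--Mukai bundle $A$ of Mukai vector $\delta=v(A)=(2,-h,g/2)$ is the destabilizing object along this wall and satisfies $(\delta,v)=0$, the lattice is $\mathcal{H}=\Z v+\Z\delta$, whose Gram matrix in the basis $(v,\delta)$ is the diagonal matrix $\mathrm{diag}(2g-2,-2)$ (so $v^2=2g-2$, $\delta^2=-2$, $(v,\delta)=0$). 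The one point where the standing hypothesis $4\mid g$ is genuinely used is the verification that $\Z v+\Z\delta$ is saturated: modulo $\langle v,\delta\rangle$ the only candidate for a nontrivial torsion class is represented by $\tfrac12(v+\delta)=\bigl(1,0,\tfrac{2-g}{4}\bigr)$, which lies in $H^*_\mathrm{alg}(S,\Z)$ precisely when $g\equiv2\pmod4$; under $4\mid g$ it does not, so $\mathcal{H}$ is primitive.

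With the lattice in hand, I would apply Theorem~\ref{thm:walls} and search for all $a\in\mathcal{H}$ with $a^2\geq-2$ and $0\leq(a,v)\leq v^2/2=g-1$. Writing $a=pv+q\delta$ gives $(a,v)=2p(g-1)$, forcing $0\le 2p\le1$ and hence $p=0$; then $a=q\delta$ with $a^2=-2q^2\geq-2$ forces $q=\pm1$. So, exactly as in divisibility~1, the only relevant classes are $a=\pm\delta$. As $\delta$ is a spherical class orthogonal to $v$ (equivalently $\lambda=f+\delta$ is orthogonal to $\delta$, since $(f,\delta)=2$ and $\delta^2=-2$), the wall is of Brill--Noether type and the contraction of $M_\mathrm{last}$ induced by $\lambda$ is divisorial rather than a flop. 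That the general object of $M_\mathrm{last}$ remains stable across this wall, so that the contraction is genuinely divisorial and not totally semistable, is already recorded in Remark~\ref{rmk:TotallySemistAndExplicitModels}(a).

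It remains to identify the contracted divisor. By the Brill--Noether description of such walls (\cite[Corollary~6.7]{Bayer:BN}), the exceptional locus is the set of objects $F$ admitting a nonzero morphism from the destabilizing spherical object $A$, that is $\{F\in M_\mathrm{last}:\hom_S(A,F)>0\}=\Delta_\mathrm{last}$. Its class is $\vartheta(\delta)$, in agreement with Proposition~\ref{prop:Deltadiv2}; since the Mukai flops $M=M_{0,-1}\dra M_\mathrm{last}$ of Lemma~\ref{lem:MukaiFlops} are isomorphisms in codimension~$1$, the divisor $\Delta_\mathrm{last}$ is indeed the strict transform of $\Delta$ and retains the class $\vartheta(\delta)$.

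I expect the only real subtlety, beyond this routine transcription, to be the primitivity of $\mathcal{H}$: it is precisely the failure of $\tfrac12(v+\delta)$ to be integral under $4\mid g$ that prevents the appearance of an extra class $a$ with $a^2\geq0$ and $(a,v)=g-1$, which would otherwise alter the type of the wall. Thus the hypothesis $4\mid g$ is not cosmetic here; it is exactly what keeps the boundary wall a clean Brill--Noether divisorial contraction with exceptional divisor $\Delta_\mathrm{last}$, and with this in place the identification of the destabilizer with $A$ makes the matching with $\Delta_\mathrm{last}$ (rather than merely with some divisor of class $\vartheta(\delta)$) immediate.
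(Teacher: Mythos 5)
Your proposal is correct and follows essentially the same route as the paper, which proves this lemma by declaring it "analogous to Lemma~\ref{lem:DivContractionDiv1}": one takes the rank~$2$ lattice $\mathcal{H}=\Z v+\Z\delta$, finds via Theorem~\ref{thm:walls} that the only classes $a$ with $a^2\ge -2$ and $0\le (a,v)\le g-1$ are $a=\pm\delta$, and identifies the resulting Brill--Noether divisorial contraction's exceptional locus with $\Delta_{\mathrm{last}}=\{F:\hom_S(A,F)>0\}$. Your explicit verification that $\Z v+\Z\delta$ is saturated (ruling out $\tfrac12(v+\delta)$ using $4\mid g$) is a detail the paper leaves implicit, and it is a worthwhile addition since an index-two overlattice would indeed introduce an extra class with $(a,v)=g-1$.
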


\begin{lemm}\label{lem:Delstratdiv2}
The divisor $\Delta_\mathrm{last}$ is prime and 
\begin{equation}\label{stratdel2}
\Delta_\mathrm{last}=\bigsqcup\limits_{1\le k\le\lfloor \sqrt{g} \rfloor}\Delta_{\mathrm{last}}(k),\qquad \ov{\Delta_{\mathrm{last}}(i)}=\bigsqcup\limits_{i\le k\le\lfloor \sqrt{g} \rfloor}\Delta_{\mathrm{last}}(k).
\end{equation}
Moreover let $\overline{\sigma}\in \Stab^{\mathrm{geom}}(\Db(S)) \subset\Stab^\dagger(\Db(S))$ be a stability condition such that $\ell_{\overline{\sigma}}=\lambda$. Then each $\Delta_{\mathrm{last}}(k)$  is isomorphic to a locally trivial (in the analytic topology) Grassmannian bundle $\Gr(k,\cU_{2k})$ over the $(2g-2k^2)$--dimensional moduli space $M^\mathrm{st}_{\overline{\sigma}}(b_k)$ of \emph{stable} objects, where
\[
b_k = - \left(2k, (1-k) h, \left(1-\frac k2\right) g -1\right)
\]
and
$\cU_{2k}$ is a twisted vector bundle of rank $2k$ induced by the relative Ext-sheaf.
\end{lemm}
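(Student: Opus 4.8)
The plan is to run the arguments of Lemma~\ref{lem:DivContractionDiv1} and Lemma~\ref{lemm:Delstratdiv1} essentially verbatim, now with the rigid spherical bundle $A$ of~\eqref{eq:eccoA}, whose Mukai vector $v(A)=(2,-h,g/2)$ satisfies $\vartheta(v(A))=\delta=\cl(\Delta)$, playing the role that $\cO_S$ played in divisibility~1. By Lemma~\ref{lem:DivContractionDiv2} the class $\lambda$ is nef on $M_\mathrm{last}$ and contracts exactly $\Delta_\mathrm{last}$. Taking $\overline\sigma$ to be the stability condition on the corresponding wall with $\ell_{\overline\sigma}=\vartheta(\lambda)$ (so $\overline\sigma=\sigma_{\alpha,-1/2}$ with $\alpha=\tfrac{1}{2\sqrt{g-1}}$, in the notation of Lemma~\ref{lem:MukaiFlops}), one checks $v(A)^2=-2$ and $(v(A),v)=0$; hence $A$ is $\overline\sigma$-stable and is precisely the spherical object destabilizing along this wall. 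Crossing it is therefore a Brill--Noether--type divisorial contraction governed by $A$, with exceptional locus $\Delta_\mathrm{last}=\{\hom_S(A,F)>0\}$.

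For primality I would argue as in Corollary~\ref{coro:NefDiv1}. The divisor $\Delta_\mathrm{last}$ is the $\phi$-exceptional divisor, its class $\delta$ is primitive in $\NS(M_\mathrm{last})$ (the coefficient of $h$ in $v(A)$ being $-1$), and it has negative square $\delta^2=-2$. A primitive class of negative square is represented by a unique prime exceptional divisor, so $\Delta_\mathrm{last}$ is reduced and irreducible.

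For the stratification I would analyse, for each $F$ with $\hom_S(A,F)=k$, its Jordan--H\"older filtration at $\overline\sigma$. Exactly as in~\eqref{eq:DivContractionDiv1}, this filtration must take the two-step form $A\otimes W^\vee\to F\to F_k$ with $F_k$ an $\overline\sigma$-stable object, $\dim W=k$, and $b_k:=v(F_k)=v-k\,v(A)$. Since $A$ is spherical and $\Hom(A,F_k)=0$ (distinct stable objects of equal phase), applying $\Hom(A,-)$ gives $\hom_S(A,F)=\dim W=k$, matching~\eqref{deltacidi}; applying $\Hom(-,A)$ and using $\Hom(F,A)=0$ realizes $W$ as a $k$-dimensional subspace of $U_{2k}:=\Hom(F_k,A[1])=\Ext^1(F_k,A)$. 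A Riemann--Roch computation yields $(b_k,v(A))=2k$, whence $\chi(F_k,A)=-2k$, and the vanishing of $\Hom(F_k,A)$ and of $\Ext^2(F_k,A)\cong\Hom(A,F_k)^\vee$ forces $\dim U_{2k}=2k$. Thus $\Delta_\mathrm{last}(k)$ is the locus of pairs $(F_k,W)$, i.e.\ an analytically locally trivial Grassmannian bundle $\Gr(k,\cU_{2k})$ over $M^\mathrm{st}_{\overline\sigma}(b_k)$, where $\cU_{2k}$ is the rank-$2k$ twisted bundle built from the relative $\Ext^1$-sheaf. The bound $k\le\lfloor\sqrt g\rfloor$ is the non-emptiness condition $b_k^2=2g-2-2k^2\ge-2$ for this base, and the closure relations~\eqref{stratdel2} follow from upper semicontinuity of $F\mapsto\hom_S(A,F)$ together with each $\{\hom_S(A,F)\ge i\}$ being a determinantal locus of the expected codimension.

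The main obstacle is to justify that the Jordan--H\"older filtration at $\overline\sigma$ really has this simple two-step shape---$k$ copies of the single spherical factor $A$ together with one stable factor $F_k$---rather than a longer filtration with several distinct stable pieces or higher multiplicities. Here the wall-crossing analysis of Example~\ref{ex:rk2} and Remark~\ref{rmk:TotallySemistAndExplicitModels}(a) is essential: on the segment $\sigma_{\alpha,-1/2}$ there are no totally semistable walls, and the imaginary part of the central charge constrains the Mukai vectors of potential subobjects, so $A$ is the only spherical factor that can appear and the complementary factor is forced to be stable. The remaining points---genericity of $\overline\sigma$ on the relevant face and the twisted, analytic-local triviality of $\cU_{2k}$---are handled just as in the divisibility~1 case.
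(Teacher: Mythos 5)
Your proposal takes the same route as the paper: the published proof likewise runs the arguments of Lemma~\ref{lem:DivContractionDiv1} and Lemma~\ref{lemm:Delstratdiv1} with the spherical bundle $A$ in place of $\cO_S$, exhibits the two-step Jordan--H\"older filtration $A\otimes W^\vee\to F\to F_k$ at a stability condition on the wall associated to $\lambda$, deduces the Grassmannian bundle structure and the closure relations from the expected codimension of the determinantal strata, and gets primality from irreducibility of the base moduli space together with indivisibility of $\delta$. Two points should be corrected. First, the point $\sigma_{\alpha,-1/2}$ with $\alpha=\frac{1}{2\sqrt{g-1}}$ is \emph{not} a stability condition: along $\beta=-1/2$ one has $\Im Z_{\alpha,-1/2}(v(A))=0$, and the real part $-\frac12+2(g-1)\alpha^2$ vanishes exactly at $\alpha=\frac{1}{2\sqrt{g-1}}$, so $Z(v(A))=0$ there and the pair falls outside $\Stab^{\mathrm{geom}}(\Db(S))$. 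You must instead take $\overline{\sigma}=\sigma_{\alpha,\beta}$ at a generic point of the wall $\{\ell_\sigma\in\RR_{>0}\,\vartheta(\lambda)\}$ near this degenerate point (the paper perturbs to $\beta<-1/2$); there $A$ is genuinely $\overline{\sigma}$-stable of the same phase as $F$ and your Jordan--H\"older analysis applies verbatim. Relatedly, your blanket claim that a primitive class of negative square is represented by a unique prime exceptional divisor is false in general (on a K3 surface the sum of two $(-2)$-curves meeting transversally in one point is a primitive class of square $-2$); what makes the argument of Corollary~\ref{coro:NefDiv1} work is that $\delta$ is primitive and spans an extremal ray of the effective cone $\langle f,\delta\rangle$, or alternatively one argues as the paper does, getting irreducibility of the support from the bundle description over the irreducible moduli space and reducedness from primitivity of $\delta$.

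Second, a point in your favor: your value $b_k=v-k\,v(A)=\bigl(-2k,(1+k)h,1-g-\tfrac{kg}{2}\bigr)$ is the correct one. It satisfies $b_k^2=2g-2-2k^2$, matching the asserted dimension $2g-2k^2$ of the base, and it is what the filtration $A\otimes W^\vee\to F\to F_k$ forces. The formula printed in the statement of the lemma, $b_k=-\bigl(2k,(1-k)h,(1-\tfrac k2)g-1\bigr)$, gives $b_1^2=4-2g\le-4$, i.e.\ an empty moduli space already for $k=1$; it appears to carry a sign typo and should read $b_k=-\bigl(2k,-(1+k)h,(1+\tfrac k2)g-1\bigr)$, which is exactly your class.
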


\begin{proof}
We prove that $\Delta_\mathrm{last}(k)$ is a Grassmannian bundle as stated.  Explicitly, choose $\overline{\sigma}=\sigma_{\alpha,\beta}$ with $(\alpha,\beta)$ sufficiently close to $(1/2\sqrt{g-1},0)$ and $\beta<-1/2$. Then $F\in \Delta_{\mathrm{last}}(k)$ if and only if its Jordan--H\"older filtration with respect to $\overline{\sigma}$ is
\[
A\otimes W^\vee \to F \to F_k,
\]
where $F_k\in M^\mathrm{st}_{\overline{\sigma}}(b_k)$, $\dim W=k$, $U_{2k}:=\Hom(F_k,A[1])$, and the twisted vector bundle $\cU_{2k}$ has fiber equal to $U_{2k}$ over the point $F_k\in M^\mathrm{st}_{\overline{\sigma}}(b_k)$. By irreducibilty of moduli spaces of stable objects on $K3$ surfaces it follows that $\Delta_\mathrm{last}$  is irreducible. Since its class $\delta$ is indivisible $\Delta_\mathrm{last}$  is a prime divisor. The equalities in~\eqref{stratdel2} follow as in the proof of Lemma~\ref{lemm:Delstratdiv1}.
\end{proof}

Recall from Section \ref{subsec:examples} that the moduli space $M$ is equipped with an involution $\tau$ induced by the functor
\[
\Psi := \cO_S(-H)\otimes\D\colon \Db(S)^{\mathrm{op}}\xlongrightarrow{\cong}\Db(S)
\]
and that $\tau$ acts fiberwise with respect to the Lagrangian fibration $\pi$.
We now verify that $\tau$ is moreover compatible with the Mukai flops $f_{c,d}$ and divisorial contraction $\phi$ introduced above. 

\vspace{0.5cm}

\begin{lemm}\label{lem:InvolutionDiv2}
We adopt the notation of Lemma~\ref{lem:MukaiFlops} and Lemma~\ref{lem:DivContractionDiv2}.
\begin{enumerate}[(i)]
\item\label{enum:InvolutionDiv21} The involution $\tau$ on $M$ is compatible with the Mukai flops $f_{c,d}$. More precisely, the functor $\Psi$ induces an isomorphism $M_{\sigma_{c,d}}(a_{c,d}) \xrightarrow{\cong} M_{\sigma_{c,d}}(v-a_{c,d})$ so that the induced action on $M_{\sigma_{c,d}}(a_{c,d}) \times M_{\sigma_{c,d}}(v-a_{c,d})$ exchanges the two factors and given a fixed point $(R_{c,d},\Psi(R_{c,d}))$ the induced actions on the fibers $\P(\mathcal{V}_{c,d |_{(R_{c,d},\Psi(R_{c,d}))}})$ and $\P(\mathcal{V}^\vee_{c,d |_{(R_{c,d},\Psi(R_{c,d}))}})$ are naturally dual to each other.
\item\label{enum:InvolutionDiv22} The involution $\tau$ on $M$ maps each $\Delta_{\mathrm{last}}(k)$ to itself and is compatible with the Grassmannian bundle $\Delta_{\mathrm{last}}(k)\to M^\mathrm{st}_{\overline{\sigma}}(b_k)$; the corresponding involution on $M^\mathrm{st}_{\overline{\sigma}}(b_k)$ is induced by the anti-autoequivalence $\Phi\colon\Db(S)^{\mathrm{op}}\xrightarrow{\cong}\Db(S)$ given by $\Phi:=\mathrm{ST}_{A}\circ(\cO_S(-H)\otimes\D)$.
In particular, $\tau$ induces a regular involution $\ov{\tau}\colon\ov{M}\xrightarrow{\cong}\ov{M}$. 
\end{enumerate}
\end{lemm}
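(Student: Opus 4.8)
The plan is to mirror the divisibility~1 argument of Lemma~\ref{lem:InvolutionDiv1}, with the spherical bundle $A$ now playing the role that $\cO_S$ played there. Everything hinges on the identity $\Psi(A)\cong A[1]$, the exact analogue of $\D(\cO_S)=\cO_S[1]$. To establish it I would first record that on Mukai vectors $\Psi=\cO_S(-H)\otimes\D$ acts by $(r,c\,h,s)\mapsto(-r,(r+c)h,-s-(r+2c)(g-1))$, so that $\Psi$ sends $v(A)=(2,-h,g/2)$ to $-v(A)$; since $A^{\vee}\otimes\cO_S(-H)$ is stable with the same Mukai vector as $A$ and $A$ is the unique such stable bundle, this forces $A^{\vee}\otimes\cO_S(-H)\cong A$, i.e.\ $\Psi(A)\cong A[1]$. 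The same formula gives at once $\Psi(a_{c,d})=v-a_{c,d}$ (up to an overall sign, harmless since $M_{\sigma}(w)=M_{\sigma}(-w)$).

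For part~\ref{enum:InvolutionDiv21} I would argue as follows. Since $\tau^{*}$ acts as $+1$ on $H^2(\tau)_{+}=\NS(M)$ by Proposition~\ref{azione}, it fixes every Mori chamber of Lemma~\ref{lem:NefDiv2}; consequently $\tau$ restricts to a regular automorphism of each model $M_{c,d}$ and $\Psi$ carries $\sigma_{c,d}$-stable objects to $\sigma_{c,d}$-stable objects. Combined with $\Psi(a_{c,d})=v-a_{c,d}$ this yields the isomorphism $M_{\sigma_{c,d}}(a_{c,d})\xrightarrow{\cong}M_{\sigma_{c,d}}(v-a_{c,d})$, and $\Psi^{2}\cong\mathrm{id}$ forces the induced self-map of $Z_{c,d}$ to be $(R,R')\mapsto(\Psi R',\Psi R)$, i.e.\ the swap of the two factors. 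At a fixed point $(R_{c,d},\Psi(R_{c,d}))$ the fibre of $\mathcal{V}_{c,d}$ is $\Ext^1(\Psi R_{c,d},R_{c,d})$ by Remark~\ref{rmk:TotallySemistAndExplicitModels}, whereas the fibre of $\mathcal{V}^{\vee}_{c,d}$ is $\Ext^1(R_{c,d},\Psi R_{c,d})$; Serre duality on $S$ identifies these as dual vector spaces, and since $\Psi$ is contravariant the two induced linear actions are transpose to one another, giving the asserted duality of $\P(\mathcal{V}_{c,d})$ and $\P(\mathcal{V}^{\vee}_{c,d})$.

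For part~\ref{enum:InvolutionDiv22} I would first check that $\tau$ preserves each $\Delta_{\mathrm{last}}(k)$, i.e.\ $\hom_S(A,F)=\hom_S(A,\Psi F)$ for every $F\in M_{\mathrm{last}}$. Using that $\Psi$ is an anti-autoequivalence with $\Psi^2\cong\mathrm{id}$ and $\Psi(A)\cong A[1]$, I get $\Hom(A,\Psi F)\cong\Hom(F,\Psi A)=\Ext^1(F,A)$, which Serre duality identifies with $\Ext^1(A,F)^{\vee}$. On the other hand $(v(A),v)=0$ gives $\chi(A,F)=0$, and $\Ext^2(A,F)\cong\Hom(F,A)^{\vee}=0$, there being no nonzero map from the $\overline{\sigma}$-semistable $F$ to the stable $A$, as one reads off from the Jordan--H\"older filtration $A\otimes W^{\vee}\to F\to F_k$ (the connecting map $W\hookrightarrow\Ext^1(F_k,A)=U_{2k}$ being the defining inclusion), exactly as $\hom_S(F,\cO_S)=0$ in divisibility~1. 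Hence $\hom_S(A,F)=\dim\Ext^1(A,F)=\hom_S(A,\Psi F)$. To identify the involution induced on $M^{\mathrm{st}}_{\overline{\sigma}}(b_k)$ I would then apply $\Psi$ followed by the spherical twist $\mathrm{ST}_A$ to that same triangle, assembling the octahedron exactly as in diagram~\eqref{eq:CommutativeDiagramPhi}: the evaluation triangles defining $\mathrm{ST}_A$ furnish the columns, and one reads off a triangle $A\otimes(U_{2k}/W)\to\Psi(F)\to\Phi(F_k)$, so that the base involution is induced by $\Phi=\mathrm{ST}_A\circ\Psi$. Finally, since $\phi$ contracts precisely the $\lambda$-trivial curves and $\tau^{*}\lambda=\lambda$, the $\tau$-invariance of $\Delta_{\mathrm{last}}$ and of these curves lets $\tau$ descend to a regular involution $\ov{\tau}$ on $\ov{M}$.

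The genuinely new inputs beyond the divisibility~1 template are the identity $\Psi(A)\cong A[1]$ and the octahedral bookkeeping once the line bundle $\cO_S$ is replaced by the rank-two bundle $A$. I expect the main obstacle to be the vanishing $\Ext^2(A,F)=0$: unlike divisibility~1, where it followed for free from $F$ being a torsion sheaf, here $F$ is only a Bridgeland-stable object, so the vanishing must be extracted from stability together with the Jordan--H\"older filtration, and this is the step I would treat most carefully.
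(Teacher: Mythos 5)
Your proposal is correct and follows essentially the same route as the paper: apply $\Psi$ to the Jordan--H\"older filtrations, use that the stable factors stay stable along the relevant segment of stability conditions (the paper cites Example~\ref{ex:rk1} directly where you invoke invariance of the Mori chambers, but the content is the same), and then run the divisibility~1 octahedron argument of Lemma~\ref{lem:InvolutionDiv1} with $A$ in place of $\cO_S$. The identity $\Psi(A)\cong A[1]$ and the vanishing $\Ext^2(A,F)\cong\Hom(F,A)^{\vee}=0$ that you single out are precisely the details the paper leaves implicit in its ``the argument is analogous'' for part~\ref{enum:InvolutionDiv22}, and your treatment of them is sound.
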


\begin{proof}
Part~\ref{enum:InvolutionDiv21} follows immediately from Lemma~\ref{lem:MukaiFlops} and the following observations.
Let $E$ be an object in $M_{c,d}$ and consider its Jordan-H\"older filtration with respect to $\sigma_{c,d}=\sigma_{\alpha_{(c,d)},-1/2}$ given in \eqref{eq:MukaiflopJH}:
\[
R_{c,d} \to E \to R_{c,d}'
\]
By applying the functor $\Psi$, we get a triangle:
\[
\Psi(R_{c,d}') \to \Psi(E) \to \Psi(R_{c,d})
\]

By using Example~\ref{ex:rk1}, $\Psi(R_{c,d})$ (respectively $\Psi(R_{c,d}')$) is also $\sigma_{c,d}$-stable with Mukai vector $v-a_{c,d}$ (respectively $a_{c,d}$).
Hence, the involution $\tau$ is compatible with the Mukai flop and it induces via the functor $\Psi$ an isomorphism  $M_{\sigma_{c,d}}(a_{c,d}) \xrightarrow{\cong} M_{\sigma_{c,d}}(v-a_{c,d})$.
The identification of $\tau$ with $\Psi$ thus shows that the action of $\tau$ on  the product $M_{\sigma_{c,d}}(a_{c,d}) \times M_{\sigma_{c,d}}(v-a_{c,d})$ exchanges the two factors.

It follows that there is an induced natural action on the universal relative Ext-(twisted) sheaf $\mathcal V_{c,d}$ on the product $M_{\sigma_{c,d}}(a_{c,d}) \times M_{\sigma_{c,d}}(v-a_{c,d})$ and thus on the projective bundle $P_{c,d}=\PP_{Z_{c,d}}(\mathcal{V}_{c,d})$. The induced action on the fibers $\P(\mathcal{V}_{c,d |_{(R_{c,d},\Psi(R_{c,d}))}})$ and $\P(\mathcal{V}^\vee_{c,d |_{(R_{c,d},\Psi(R_{c,d}))}})$ are then indeed naturally dual to each other.

For part~\ref{enum:InvolutionDiv22}, the argument is analogous to the proof of Lemma~\ref{lem:InvolutionDiv1}. 
\end{proof}

\vspace{1cm}

\begin{exam}\label{ex:Div2Flopsg48}
In low genera the walls and Mukai flops can be easily described.

(a) $g=4$. In such a case we must have $c=0$ and the only vectors allowed are $a_{0,-1}=(1,0,1)$ and $a_{0,0}=(1,0,0)$.
The birational transformations are given by
\[
\xymatrix{
M=M_{0,-1} \ar[d]^{\pi} \ar[rd]  \ar@{<-->}[rr]^{f_{0,-1}} && M_{0,0} \ar[ld] \ar[rd]  \ar@{<-->}[rr]^{f_{0,0}} && M_{\mathrm{last}} \ar[ld] \ar[d]^{\phi}\\
|\cO_S(H)|\cong\P^4 & \overline{M}_{0,-1} && \overline{M}_{0,0} & \overline{M}.
}
\]
where $f_{0,-1}$ is the Mukai flop at the $0$-section of $\pi$ described in Example~\ref{ex:FanoCptFirstFlop} and $f_{0,0}$ is the Mukai flop at the $\P^2$-bundle in the Zariski topology over the product $S\times S$.
The destabilizing exact sequences are
\[
\mathcal{O}_S \to E \to \mathcal{O}_S(-H)[1]
\]
for $f_{0,-1}$ (as already observed), and
\[
\mathscr{I}_p \to E \to \D(\mathscr{I}_q)\otimes\mathcal{O}_S(-H)
\]
for $f_{0,0}$, where $p,q\in S$.
The induced action of $\tau$ is trivial on $\P(\Hom(\cO_S(-H),\cO_S))$ and it is of type $(1,2)$ on $\P(\Ext^1(\D(\mathscr{I}_p)\otimes\mathcal{O}_S(-H),\mathscr{I}_p))$. 

The above example  gives an explicit decomposition of the birational map between the LLSvS variety of a cubic fourfold with a node and $M_h(0,h,-3)$, see the discussion at the end of Subsection~\ref{subsec:pazzaidea}. 

(b) $g=8$. Also in this case we must have $c=0$ and the vectors allowed are $a_{0,-1}=(1,0,1)$, $a_{0,0}=(1,0,0)$, and $a_{0,1}=(1,0,-1)$.
The birational transformations are
\[
\xymatrix{
M=M_{0,-1} \ar[d]^{\pi} \ar[rd]  \ar@{<-->}[rr]^{f_{0,-1}} && M_{0,0} \ar[ld] \ar[rd]  \ar@{<-->}[rr]^{f_{0,0}} && M_{0,1} \ar[ld] \ar[rd]  \ar@{<-->}[rr]^{f_{0,1}} && M_{\mathrm{last}} \ar[ld] \ar[d]^{\phi}\\
|\cO_S(H)|\cong\P^8 & \overline{M}_{0,-1} && \overline{M}_{0,0} && \overline{M}_{0,1} & \overline{M}.
}
\]
where $f_{0,-1}$ is the Mukai flop at the $0$-section, $f_{0,0}$ the Mukai flop at the $\P^6$-bundle in the Zariski topology over the product $S\times S$, and $f_{0,1}$ the Mukai flop at the $\P^4$-bundle in the Zariski topology over the product $S^{[2]}\times S^{[2]}$. Destabilizing sequences for $f_{0,-1}$ and $f_{0,0}$ are analogous to the $g=4$ case, while the one for $f_{0,1}$ is
\[
\mathscr{I}_{\Gamma} \to E \to \D(\mathscr{I}_{\Gamma'})\otimes\mathcal{O}_S(-H)
\]
where $\Gamma,\Gamma'\subset S$ are $0$-dimensional closed subschemes of length 2.
The induced action of the involution $\tau$ is trivial on $\P(\Hom(\cO_S(-H),\cO_S))$, it is of type $(1,6)$ on $\P(\Ext^1(\D(\mathscr{I}_p)\otimes\mathcal{O}_S(-H),\mathscr{I}_p))$, and of type $(2,3)$ on $\P(\Ext^1(\D(\mathscr{I}_{\Gamma})\otimes\mathcal{O}_S(-H),\mathscr{I}_{\Gamma}))$.

(c) The first higher rank flop ($c>0$) starts at $g=12$, with the vector $a_{1,-1}=(3,-h,4)$: the stable factors in the Jordan-H\"older filtration have ranks $\pm 3$, and the geometric intuition is less clear.
\end{exam}

\begin{rema}
As in the divisibility~1 case, the moduli space $M$, the involution $\tau$, and the contraction $\phi\colon M\rightarrow \overline{M}$ satisfy the requirements of Section~\ref{sec:surgery} via Proposition~\ref{azione}, Lemma~\ref{lem:DivContractionDiv2} and Lemma~\ref{lem:InvolutionDiv2}.
Hence, analogously to the divisibility~1 case, in the divisibility~2 case, the specialization we consider for the proof of the Main Theorem is the pair $(\overline{M}, \overline{\lambda})$, where $\overline{\lambda}$ denotes the ample Cartier divisor class induced by $\lambda$ on $\overline{M}$, equipped with the involution $\overline{\tau}$. 
\end{rema}


\section{Connected components of the fixed locus}\label{sec:ConnectedComponents}

The goal of this section is to study the connected components of the fixed locus of the involution $\tau$ on the Lagrangian fibrations introduced in Section~\ref{sec:birational}, for a general polarized K3 surface.
This study is easier on these models since the fixed locus here  corresponds either to (the closure of the) locus of $2$--torsion points or of theta-characteristics on the linear sections of the K3 (see Section~\ref{subsec:Trigonal}).
For a general K3 surface, we compute the monodromy group and then rule out the existence of exotic components fully supported over singular curves. This allows us to compute the number of connected components.

As an immediate application, in Section~\ref{subsec:ProofThm1} we use the results in Section~\ref{sec:surgery} to prove the Main Theorem in the divisibility~1 case.
The proof of the Main Theorem in the divisibility~2 case requires a more in-depth analysis of how the connected components of the fixed locus of $\tau$ behave under wall-crossing. This will be done in Section~\ref{sec:ExplicitMMP}.

\subsection{Components of the fixed locus on Lagrangian fibrations}\label{subsec:Trigonal}

Let $(S,h)$ be a polarized K3 surface of genus $g$ for which every curve in the linear system $ |\cO_S(H)|$ is integral.
We consider the two moduli spaces of Section~\ref{subsec:examples}
\[
M_h(0,h,1-g) \qquad \text{ and } \qquad M_h(0,h,0) 
\]
together with the antisymplectic involutions $\cO_S(-H)\otimes\D$ on $M_h(0,h,1-g)$ and $\D$ on $M_h(0,h,0)$, which by abuse of notation are both denoted by $\tau$.

We recall that since $\tau$ is antisymplectic, the fixed locus $\Fix(\tau)$ on each of these moduli spaces is a smooth Lagrangian submanifold.
The two key results of this section are the following propositions.

\begin{prop}\label{tuttobene}
The irreducible components of $\Fix(\tau)$ on $M_h(0,h,1-g)$ are
\begin{equation}\label{simom}
\Fix(\tau)=\Sigma\sqcup \Omega,
\end{equation}
where $\Sigma$ is the image of the zero section (and thus parametrizes the structure sheaves of the curves in $ |\cO_S(H)|$) and where $ \Omega$  is the closure of the locus parametrizing sheaves $i_{C,*}(\xi)$ where  $C\in |\cO_S(H)|$ is a smooth curve and $\xi$ is a non-trivial square root of $\cO_C$ (and thus is the closure of $2$-torsion points).
\end{prop}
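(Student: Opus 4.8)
The plan is to reduce the statement to a fiberwise analysis over the locus $U\subset\PP^g$ of smooth curves, and then to control what happens over the discriminant. First I would work over a smooth curve $C\in|\cO_S(H)|$, where $\pi^{-1}(C)=\Pic^0(C)$ and, by the explicit description of $\tau$ recalled in Section~\ref{subsec:examples}, the involution acts as $i_{C,*}(\xi)\mapsto i_{C,*}(\xi^\vee)$, i.e.\ as the inversion $\xi\mapsto\xi^{-1}$ on the abelian variety $\Pic^0(C)$. Its fixed points are therefore exactly the $2$-torsion subgroup $\Pic^0(C)[2]\cong\FF_2^{2g}$ (here $g=p_a(C)$), which splits as the origin $\cO_C$ together with the $2^{2g}-1$ non-trivial square roots of $\cO_C$. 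Thus over $U$ one gets $\Fix(\tau)\cap\pi^{-1}(U)=\Sigma_U\sqcup\Omega_U$, where $\Sigma_U$ is the zero section and $\Omega_U\to U$ is a finite \'etale cover of degree $2^{2g}-1$.

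Next I would upgrade each piece to a global irreducible component. For $\Sigma$, the structure sheaf $\cO_C=i_{C,*}(\cO_C)$ is $\tau$-fixed and stable for every integral curve $C$, so the zero section extends as a section over all of $\PP^g$, giving a closed irreducible subvariety $\Sigma\cong\PP^g$ of dimension $g$. Since $\Fix(\tau)$ is smooth of pure dimension $g$ (being the fixed locus of an antisymplectic involution, hence Lagrangian), $\Sigma$ is automatically a union of connected components, and being connected it is a single one. For $\Omega$, irreducibility amounts to connectedness of the cover $\Omega_U\to U$, which is governed by the monodromy action of $\pi_1(U)$ on $\Pic^0(C)[2]\cong H^1(C;\FF_2)$. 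This action preserves the Weil pairing and hence factors through $\Sp(2g,\FF_2)$; invoking the monodromy computation of this section (the image acts transitively on the non-zero vectors of $\FF_2^{2g}$, in contrast with the theta-characteristic case of $M_h(0,h,0)$ where a quadratic refinement produces two orbits) shows that $\Omega_U$ is connected, so its closure $\Omega$ is connected, and by the same smoothness argument $\Omega$ is a smooth irreducible component. Disjointness $\Sigma\cap\Omega=\es$ is then automatic, since distinct connected components of the smooth variety $\Fix(\tau)$ cannot meet and $\cO_C\notin\Omega_U$.

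The remaining, and genuinely delicate, point is to show that $\Sigma$ and $\Omega$ exhaust $\Fix(\tau)$, i.e.\ that no connected component is supported entirely over the discriminant $D=\PP^g\setminus U$; this is where I expect the main obstacle to lie. Any such exotic component $W$ would be a $g$-dimensional subvariety of $\Fix(\tau)$ with $\pi(W)\subseteq D$, and since $\dim D\le g-1$ it would meet some fiber $\pi^{-1}(C)$, with $C$ singular integral, in a positive-dimensional locus of $\tau$-fixed points inside the compactified Jacobian $\ov{\Pic}^0(C)$. The plan is therefore to analyze the action $\xi\mapsto\xi^\vee$ on $\ov{\Pic}^0(C)$ for singular integral $C$, in particular on the non-locally-free boundary strata, and to prove---this being the content of the ``no exotic components'' results of this section---that the fixed locus over any such $C$ is too small to sweep out a new $g$-dimensional component. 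Combining this with the previous two paragraphs yields $\Fix(\tau)=\overline{\Sigma_U\sqcup\Omega_U}=\Sigma\sqcup\Omega$, as claimed. The hardest part is precisely the boundary analysis of the compactified Jacobians, together with the uniform control needed as $C$ varies in the $(g-1)$-dimensional discriminant.
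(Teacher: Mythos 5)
Your proposal follows essentially the same route as the paper: fiberwise identification of the fixed locus with the $2$-torsion over smooth curves, connectedness of $\Omega$ via surjectivity of the monodromy onto the full symplectic group (the paper's Proposition~\ref{montrans}), and exclusion of ``exotic'' components over the discriminant via a dimension estimate for fixed loci in compactified Jacobians of integral singular curves (the paper's Proposition~\ref{alldom}), which is exactly the auxiliary result you defer to. The only points you elide are the reduction to a general $(S,h)$ by deformation (needed because the monodromy statement holds only for general polarized K3's of genus $g\ge 3$) and the case $g=2$, which the paper handles separately using the Uniform Position Theorem applied to line sections of the branch sextic.
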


\begin{prop}\label{effetto}
The irreducible components of $\Fix(\tau)$ on $M_h(0,h,0)$ are
\begin{equation}\label{piuomeno}
\Fix(\tau)={\mathsf S}^{+}\sqcup{\mathsf S}^{-},
\end{equation}
where ${\mathsf S}^{+}$, respectively ${\mathsf S}^{-}$, is the closure of the locus parametrizing sheaves $i_{C,*}(\xi)$ where $C\in |\cO_S(H)|$ a smooth curve and $\xi$ is an even, respectively odd, theta-characteristic.
\end{prop}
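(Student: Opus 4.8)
The plan is to study $\Fix(\tau)$ fibrewise over the base $\PP^g=|\cO_S(H)|$ of the Lagrangian fibration $\pi$, using that, as recalled above, $\Fix(\tau)$ is a smooth Lagrangian submanifold, hence of pure dimension $g$. Let $U\subset\PP^g$ be the open locus of smooth curves and $D:=\PP^g\setminus U$ the discriminant (consisting of singular, but still integral, curves by hypothesis). Over a smooth $C\in U$ the involution acts on $\pi^{-1}(C)=\Pic^{g-1}(C)$ by $\xi\mapsto\omega_C\otimes\xi^{\vee}$ (Section~\ref{subsec:examples}), so its fixed points are precisely the line bundles $\xi$ with $\xi^{\otimes 2}\cong\omega_C$, i.e.\ the $2^{2g}$ theta-characteristics of $C$. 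Thus $\Fix(\tau)\cap\pi^{-1}(U)\to U$ is a finite \'etale cover of degree $2^{2g}$, and $\mathsf S^{\pm}$ are by definition the closures in $\Fix(\tau)$ of the sublocus of theta-characteristics of even, respectively odd, parity.

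First I would record the parity decomposition of this cover. Each theta-characteristic $\xi$ on $C$ defines a quadratic refinement $q_{\xi}(\alpha)=h^{0}(C,\xi\otimes\alpha)+h^{0}(C,\xi)\bmod 2$ of the Weil pairing on $J(C)[2]\cong H^{1}(C;\FF_2)$, and by Mumford's theory of theta-characteristics the Arf invariant of $q_{\xi}$ equals $h^{0}(C,\xi)\bmod 2$, which is locally constant in families. Consequently $\Fix(\tau)\cap\pi^{-1}(U)$ is the disjoint union of an even part $\mathsf S^{+}_{U}$ of degree $2^{g-1}(2^{g}+1)$ and an odd part $\mathsf S^{-}_{U}$ of degree $2^{g-1}(2^{g}-1)$, with $\mathsf S^{\pm}=\overline{\mathsf S^{\pm}_{U}}$.

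The next step is to show $\mathsf S^{+}$ and $\mathsf S^{-}$ are irreducible, equivalently that $\mathsf S^{\pm}_{U}$ are connected. Since $\Sp(H^{1}(C;\FF_2))=\Sp(2g,\FF_2)$ acts on the set of quadratic refinements with exactly two orbits, one for each value of the Arf invariant, it suffices to prove that the monodromy representation $\pi_{1}(U)\to\Sp(2g,\FF_2)$ of the universal family of curves in $|\cO_S(H)|$ is surjective: transitivity on each Arf class then forces each of $\mathsf S^{\pm}_{U}$ to be a single orbit, hence connected. This surjectivity is where the assumption that $(S,h)$ is general (e.g.\ $\NS(S)=\ZZ h$) enters: for a sufficiently ample system on a K3 the generic point of $D$ parametrizes a one-nodal curve, the corresponding Picard--Lefschetz transvections are mutually conjugate, and the vanishing cycles span $H^{1}(C;\FF_2)$, so these transvections generate the full symplectic group. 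I would invoke the standard big-monodromy results for curves in ample linear systems on K3 surfaces for this step.

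It remains to prove that $\Fix(\tau)=\mathsf S^{+}\sqcup\mathsf S^{-}$, i.e.\ that no irreducible component of $\Fix(\tau)$ is supported over $D$. Because $\Fix(\tau)$ is closed in the proper space $M$ and $\pi$ is proper, $\pi|_{\Fix(\tau)}$ is proper; if it is moreover quasi-finite then it is finite, and since $\Fix(\tau)$ has pure dimension $g$ every irreducible component maps \emph{onto} the $g$-dimensional $\PP^{g}$, hence meets $\pi^{-1}(U)$ and, being irreducible, lies entirely in $\mathsf S^{+}$ or in $\mathsf S^{-}$; together with the previous step this yields that the components are exactly $\mathsf S^{\pm}$, and they are disjoint because $\Fix(\tau)$ is smooth and they are already disjoint over $U$. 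So everything reduces to quasi-finiteness of $\pi|_{\Fix(\tau)}$, namely the finiteness, for every integral curve $C\in\PP^{g}$, of the fixed locus of the duality $F\mapsto\mathcal{E}xt^{1}(F,\cO_S)$ on the compactified Jacobian $\overline{\Pic}^{\,g-1}(C)$. This \textbf{ruling out of exotic fixed components over singular curves} is the main obstacle: it requires a local analysis, near the singular points of $C$, of the rank-one torsion-free sheaves $\xi$ fixed by the dualizing-sheaf duality $\xi\cong\mathcal{H}om(\xi,\omega_C)$, showing that under the integrality hypothesis there are only finitely many of them (the generalized theta-characteristics of $C$). Once this finiteness is in hand the proof concludes as above.
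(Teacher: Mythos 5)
Your overall architecture matches the paper's: restrict to the open locus $|\cO_S(H)|^0$ of smooth curves, identify the fixed points there with theta-characteristics split by parity, prove connectedness of each parity piece via big monodromy, and rule out components of $\Fix(\tau)$ lying over the discriminant. However, the step you yourself flag as ``the main obstacle'' is both unproven and reduced to a statement stronger than what is needed and quite possibly false. You reduce to quasi-finiteness of $\pi|_{\Fix(\tau)}$, i.e.\ finiteness of the fixed locus of $F\mapsto\mathcal{E}xt^1(F,\cO_S)$ on the compactified Jacobian of \emph{every} integral member of $|\cO_S(H)|$. The paper's Proposition~\ref{alldom} proves only the weaker bound $\dim\bigl(\pi^{-1}(C)\cap\Fix(\tau)\bigr)<\delta(C)$, where $\delta(C)$ is the cogenus, and combines it with the estimate (from \cite{dCRS}) that the locus of curves of cogenus $\delta$ has codimension $\ge\delta$ in $|\cO_S(H)|$; this already forces every $g$-dimensional component to dominate $\PP^g$. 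The mechanism is the stratification of $\jb(C)$ by partial normalizations $n'\colon C'\to C$: within each $\Pic^0(C')$-orbit the fixed locus is a torsor over $\Pic^0(C')[2]$, hence finite, but the space of orbits in a boundary stratum can have positive dimension $\epsilon_{g'}$, which is only bounded by $\delta'-1$. So fiberwise finiteness is not what one should aim for; you would need to supply the orbit analysis and the cogenus--codimension estimate to close this gap.

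The monodromy step also has a flaw in the justification. Over $\FF_2$ it is \emph{not} true that a spanning, mutually conjugate collection of transvections generates all of $\Sp(2g,\FF_2)$: the orthogonal subgroups $O^{\pm}(2g,\FF_2)$ are generated by the transvections in the anisotropic vectors of a quadratic refinement, and these span and form a single conjugacy class. This is precisely the degenerate case that would destroy the two-orbit structure on theta-characteristics, so it cannot be waved away. The paper's Proposition~\ref{montrans} (valid for $g\ge3$; $g=2$ is handled separately by the Uniform Position Theorem for the sextic branch curve) establishes surjectivity onto the full integral symplectic group via Beauville's criterion, which requires exhibiting a member of $|\cO_S(H)|$ with an $E_6$ singularity together with a reducible member $C'+C''$ with $C'\cdot C''$ odd; this is done on explicit trigonal K3 surfaces in each congruence class of $g$ modulo $3$ and then transported to the general $(S,h)$ by deformation. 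Your appeal to ``standard big-monodromy results'' would need to be replaced by such an argument or a precise citation covering all genera involved.
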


\begin{rema}\label{rmk:NonIntegral}
If $|\cO_S(H)|$ contains non integral curves then $\Fix(\tau)$ may very well have more than two irreducible components; this is possible because $M_h(v)$ is singular in such cases.
For example, if $(S,h)$ is hyperelliptic covering a rational scroll $\FF_0$ or $\FF_1$ (depending on the parity of $g$) then $\Fix(\tau)\subset M_h(0,h,1-g)$ has $\lfloor\frac{g+3}{2}\rfloor$  irreducible components dominating  $|\cO_S(H)|$. In fact this follows from the explicit description of theta-characteristics on a hyperelliptic curve: the irreducible components dominating  $|\cO_S(H)|$ are indexed by the $l$ (including $l=-1$) appearing in the first two displayed equations on p.~191 of~\cite{mumtheta}. 
\end{rema}

Before proving Proposition~\ref{tuttobene} and Proposition~\ref{effetto} we need some preliminary results.
To start with, we look at the monodromy representation with integral coefficients for smooth curves on a general polarized K3 surface.

For a polarized K3 surface $(S,h)$, we denote by $|\cO_S(H)|^0\subset |\cO_S(H)|$ the open subset parametrizing smooth curves.
Notice that $|\cO_S(H)|^0$ is dense unless $(S,h)$ is unigonal (we recall that $(S,h)$ is unigonal if $H\equiv R+gE$ where $R$ is a smooth rational curve, $E$ is an elliptic curve and $R\cdot E=1$).

\begin{prop}\label{montrans}
Let $(S,h)$ be a general polarized K3 surface of genus $g\ge 3$.
Let $C_0\in |\cO_S(H)|^0$ and consider $H^1(C_0,\ZZ)$ with the intersection product.
Then the natural representation
\[
\pi_1( |\cO_S(H)|^0,C_0)\lra  \Sp(H^1(C_0,\ZZ))
\]
is surjective. 
\end{prop}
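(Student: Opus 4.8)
The plan is to prove surjectivity of the monodromy representation by exhibiting enough explicit vanishing cycles (or, more robustly, by invoking a general Zariski-type irreducibility/monodromy theorem for the universal family of smooth hyperplane sections of a K3 surface). The image is a subgroup of the symplectic group $\Sp(H^1(C_0,\ZZ))$ that contains the symplectic transvections associated to the vanishing cycles of the nodal degenerations occurring in the family. The key point is that for a \emph{general} polarized K3 surface the discriminant (Noether--Lefschetz type) locus in $|\cO_S(H)|$ parametrizing singular curves is an irreducible hypersurface whose generic point is a one-nodal curve, and that the vanishing cycles arising from these nodes form a single orbit under the monodromy action and moreover span $H^1(C_0,\ZZ)$.

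First I would reduce the problem to a statement about transvections. Recall the classical fact (Picard--Lefschetz theory) that the monodromy around a generic point of the discriminant, where $C$ acquires a single node, acts on $H^1(C_0,\ZZ)$ as the symplectic transvection $x\mapsto x\pm\langle x,\delta\rangle\delta$ in the corresponding vanishing cycle $\delta$. Thus the image of $\pi_1(|\cO_S(H)|^0,C_0)$ contains all such transvections. I would then invoke the standard group-theoretic criterion: a subgroup of $\Sp(L)$ (for $L$ a unimodular symplectic lattice) generated by symplectic transvections in a set of vectors that (i) spans $L\otimes\QQ$ and (ii) forms a single orbit under the group they generate, is all of $\Sp(L)$. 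This reduces everything to the two geometric inputs of spanning and transitivity of vanishing cycles.

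Next I would establish these two inputs. For \emph{spanning}, the curves $C\in|\cO_S(H)|$ for general $(S,h)$ have $H^1(C_0,\ZZ)$ built from the transcendental/Hodge structure of the K3 together with the geometry of the Lefschetz pencil; concretely, one can restrict to a Lefschetz pencil inside $|\cO_S(H)|$ and use that the vanishing cohomology of a Lefschetz pencil of hyperplane sections is spanned by its vanishing cycles (this is the Lefschetz theorem on the vanishing cohomology being generated by vanishing cycles), and that for curves the whole of $H^1$ is vanishing cohomology. For \emph{transitivity}, I would use the irreducibility of the discriminant hypersurface $\mathrm{disc}\subset|\cO_S(H)|$ for general $(S,h)$ — so that all the nodal vanishing cycles are conjugate under monodromy — which holds once one checks the universal family of singular members is irreducible. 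This last irreducibility is the content of the classical theory of Lefschetz pencils on a smooth projective variety, adapted to the K3 setting.

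\textbf{The main obstacle} I anticipate lies in the transitivity/irreducibility step, i.e.\ verifying that the vanishing cycles form a \emph{single} monodromy orbit rather than splitting into several. This is precisely where the genericity hypothesis on $(S,h)$ is essential: one must rule out, for special K3 surfaces, the presence of reducible or non-reduced members along codimension-one strata that would produce independent local systems of vanishing cycles. The cleanest route is probably to cite the general results on monodromy of Lefschetz pencils (e.g.\ in the style of Voisin's treatment, or Beauville's computation of monodromy groups of universal families of complete intersections) specialized to the case of a Lefschetz pencil of curves on a K3, together with the hypothesis — guaranteed for general $(S,h)$ and used repeatedly above via $\NS(S)=\ZZ h$ — that every curve in $|\cO_S(H)|$ is integral, so that the only degenerations encountered are the generic nodal ones. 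Granting irreducibility of the discriminant, the single-orbit property of vanishing cycles follows, and combined with spanning this yields surjectivity onto the full $\Sp(H^1(C_0,\ZZ))$.
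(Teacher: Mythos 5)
Your reduction to a group-theoretic criterion is where the argument breaks: the claim that a subgroup of $\Sp(L)$ generated by symplectic transvections in a spanning, single-orbit set of vectors must be all of $\Sp(L)$ is false. For a quadratic form $q\colon L/2L\to\ZZ/2$ refining the mod-$2$ symplectic pairing, one computes $q(T_\delta x)=q(x)+\langle x,\delta\rangle\bigl(q(\delta)+1\bigr)$, so the transvection $T_\delta$ preserves $q$ exactly when $q(\delta)=1$; for $g\ge 2$ the stabilizer $\Sp(L)_q$ is a proper subgroup generated by precisely these transvections, and the vectors with $q(\delta)=1$ span $L$ and form a single orbit. So spanning plus transitivity of vanishing cycles --- which is indeed what your Lefschetz-pencil and discriminant-irreducibility arguments deliver, and those parts are fine in outline --- only shows that the monodromy is either all of $\Sp(H^1(C_0,\ZZ))$ or the stabilizer of such a $q$ taking the value $1$ on every vanishing cycle. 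Beauville's paper exhibits universal families of odd-dimensional complete intersections whose monodromy is exactly the proper subgroup $\Sp_q$, so this is not a removable technicality; and in the present paper it would be fatal, since the mod-$2$ quadratic forms on $H^1(C;\ZZ/2)$ are exactly the theta-characteristics whose even/odd dichotomy underlies Proposition~\ref{effetto} --- an invariant $q$ would break $\mathsf{S}^{\pm}$ into further monodromy orbits.

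The paper closes this gap by invoking Theorem~3 of Beauville (resting on Janssen's classification of skew-symmetric vanishing lattices), which reduces surjectivity to exhibiting, on some polarized K3 of the given genus: (i) a member of $|\cO_S(H)|$ with an $E_6$ singularity, supplying the configuration of vanishing cycles needed for the dichotomy ``full $\Sp$ or $\Sp_q$''; and (ii) a reducible member $C'+C''$ with $C'\cdot C''$ odd, which kills any invariant quadratic form (the vanishing cycles $\delta_1,\dots,\delta_k$ at the $k=C'\cdot C''$ nodes are mutually orthogonal and sum to zero, so $q(\delta_i)=1$ for all $i$ would force $0=q(0)=k\bmod 2$). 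The bulk of the paper's proof is then the explicit construction of trigonal K3 surfaces, in each congruence class of $g$ modulo $3$, carrying both an $E_6$-singular member and such a reducible member. Your proposal contains neither the corrected criterion nor any substitute for these constructions, so as written it does not prove the statement.
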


\begin{proof}
According to Theorem 3 in~\cite{beaumon} (based on~\cite{janssen1,janssen2}), and the discussion thereafter, it suffices to exhibit a polarized K3 surface $(S,h)$ such that
\begin{enumerate}[(i)]
\item\label{enum:monodromy1} there exists $C\in|\cO_S(H)|$ with a singularity of Type $E_6$, and 
\item\label{enum:monodromy2} there exists  $(C'+C'')\in|\cO_S(H)|$ such that  $C',C''$ are reduced, they intersect transversely, and $C'\cdot C''$ is odd.
\end{enumerate}
Trigonal K3 surfaces (we recall that a polarized $K3$ surface $(S,h)$ is trigonal if $S$ has an elliptic pencil $|E|$  such that $H\cdot E=3$, and hence all the curves in $|H|$ are trigonal) provide  such examples. We get trigonal polarized K3's of  genus $g$ by one of the following constructions:
\begin{enumerate}[(a)]
\item\label{enum:trigonal1} $g\equiv 0\pmod{3}$. Let $S\subset\PP^3$ be a smooth quartic surface containing a plane cubic curve $E$. Let $D\in|\cO_{S}(1)|$. Then $H:=D+\frac{(g-3)}{3}E$ is an ample divisor with $H\cdot H=2g-2$, and $(S,h)$ is trigonal because $H\cdot E=3$.
\item\label{enum:trigonal2} $g\equiv 1\pmod{3}$. Let $S\subset\PP^4$ be the transverse intersection of a quadric cone $Q$ and  cubic hypersurface. Let $D\in|\cO_{S}(1)|$. The projection from the vertex of $Q$ defines a finite map $S\to \PP^1\times\PP^1$. Composing with one of the two projections $\PP^1\times\PP^1\to\PP^1$ we get an elliptic fibration $S\to\PP^1$; let $E$ be an element of the fibration. Then $H:=D+\frac{g-4}{3}E$ is an ample divisor with $H\cdot H=2g-2$, and $(S,h)$ is trigonal because $H\cdot E=3$.
\item\label{enum:trigonal3} $g\equiv 2\pmod{3}$, $g\ge 5$: Let $S\subset\PP^2\times\PP^1$ be a smooth element of $|\cO_{\PP^2}(3)\boxtimes\cO_{\PP^1}(2)|$. Let $p_i$ be the projections of $S$ to $\PP^2$ and $\PP^1$ respectively, and let $D\in|p_1^{*}\cO_{\PP^2}(1)|$, $E\in|p_2^{*}\cO_{\PP^2}(1)|$. Then $H:=D+\frac{g-2}{3}E$ is an ample divisor with 
$H\cdot H=2g-2$, and $(S,h)$ is trigonal because $H\cdot E=3$.
\end{enumerate}

For any such trigonal $(S,h)$ as above Item~\ref{enum:monodromy2} holds. In fact we can take $C'\in|E|$ and $C''\in |H-E|$ (which has no base locus).
We will show that in each of~\ref{enum:trigonal1}, \ref{enum:trigonal2}, \ref{enum:trigonal3}, one can choose $(S,h)$ such that Item~\ref{enum:monodromy1} holds as well.
In each of~\ref{enum:trigonal1}, \ref{enum:trigonal2}, $H\equiv D+mE$ with $m\ge 0$. Since $E$ has no base locus, it suffices to show that there exists a choice of $(S,h)$ with a $D\in|\cO_{S}(1)|$ with a singularity of Type $E_6$.
In~\ref{enum:trigonal3}, $H\equiv D+mE$ with $m\ge 1$: hence it suffices to show that there exists a choice of $(S,h)$ with a $C\in|\cO_{S}(D+E)|$ with a singularity of Type $E_6$. 

In case~\ref{enum:trigonal1}, we let $[x,y,z,t]$ be homogeneous coordinates on $\PP^3$ and  $S$ be 
\[
x^4+y^3t-z^4+zt^3=0.
\]
As is easily checked $S$ is smooth, the curve $S\cap V(z)$ has an $E_6$ singularity at $[0,0,0,1]$. Moreover $S$ contains the line $R:=V(t,x-z)$, and hence it contains the plane cubic cuves in the pencil $|H-R|$. 

In case~\ref{enum:trigonal2}, let $[x_0,\ldots,x_5]$ be homogeneous coordinates on $\PP^4$ and let $S$ be
\[
x_0x_1-x_2 x_3=x_4^3+f(x_0,\ldots,x_3)=0.
\]
The vertex of the quadric cone containing $S$ is $p=[0,\ldots,0,1]$. Projection from $p$ defines a finite map $S\to T$, where $T\subset\PP^3$ is the smooth quadric $x_0x_1-x_2 x_3=0$.
If the curve   $B\subset\PP^3$ defined by $x_0x_1-x_2 x_3=f(x_0,\ldots,x_3)=0$ is smooth, then $S$ is smooth.
If $\Lambda\subset\PP^3$ is a plane such that there exists a point $q\in \Lambda\cap B$ with $\mult_q(\Lambda\cdot B)=4$, then the hyperplane $\la p,\Lambda\ra\subset\PP^4$ intersects $S$ in a point of Type $E_6$. One easily writes down such examples.

In case~\ref{enum:trigonal3}, first we define a curve $C\subset\PP^1\times\PP^2$ with an $E_6$ singularity, and then we argue that there exists a smooth $S$ as above containing $C$ as a divisor in  $|\cO_{S}(D+E)|$.
Let  $[x,y,z]$, $[s,t]$  be homogeneous coordinates on $\PP^2$ and $\PP^1$ respectively, and let $C$ be defined by
\[
x^3st+y^3s^2+y^3t^2=sx-tz=0.
\]
The curve $C$ has an $E_6$ singularity at $([1,0],[0,0,1])$.
Moreover  let $S\subset\PP^2\times\PP^1$ be defined by
\[
x^3st+y^3s^2+y^3t^2+\varphi(x,y,z;s,t)(sx-tz)=0,
\]
where $\varphi(x,y,z;s,t)$ is a bi-homogeneous polynomial of degree $3$ in $x,y,z$ and degree $2$ in $s,t$.
Then $C\subset S$ and $C\in|\cO_{S}(D+E)|$.
It remains to prove that there exists $\varphi(x,y,z;s,t)$ such that $S$ is smooth. This follows from Bertini's theorem.
\end{proof}

As another preliminary result, we need to exclude the existence of connected components in the fixed locus entirely contained over the locus parametrizing singular curves.

\begin{prop}\label{alldom}
Let  $(S,h)$ be a polarized K3 surface of genus $g$ such that every curve in the linear system $ |\cO_S(H)|$ is integral.
Then every irreducible component of $\Fix(\tau)$ dominates $|\cO_S(H)|$.
\end{prop}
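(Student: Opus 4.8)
The plan is to show that the restriction $\pi|_{\Fix(\tau)}\colon \Fix(\tau)\to |\cO_S(H)|\cong\PP^g$ has \emph{finite fibers}; domination of every component then follows formally. Since $\tau$ is an antisymplectic involution of the smooth HK manifold $M_h(v)$, its fixed locus is a smooth Lagrangian submanifold, hence pure of dimension $g=\tfrac12\dim M_h(v)=\dim\PP^g$; moreover $\Fix(\tau)$ is projective, so $\pi|_{\Fix(\tau)}$ is proper. Granting finite fibers, for each irreducible component $Z$ one gets $\dim\pi(Z)=\dim Z=g$, and $\pi(Z)$ is irreducible and closed; as $\PP^g$ is irreducible of dimension $g$ this forces $\pi(Z)=\PP^g$. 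Thus everything reduces to the claim: for every integral curve $C\in|\cO_S(H)|$, the fixed locus of $\tau$ on the fiber $\pi^{-1}(C)=\ov{\Pic}{}^d(C)$ (the compactified Jacobian) is finite.

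On the fiber $\tau$ restricts to the dualizing involution $\xi\mapsto\xi^D$, with $\xi^D=\cH om(\xi,\omega_C)$ when $v=(0,h,0)$ and $\xi^D=\cH om(\xi,\cO_C)$ when $v=(0,h,1-g)$ (the degree being normalized so that $\ov{\Pic}{}^d(C)$ is preserved). On the open locus of line bundles I would argue by a torsor argument. Let $\Pic^0(C)$ denote the generalized Jacobian, a semiabelian group of dimension $g$ acting freely and transitively on the line-bundle locus $\Pic^d(C)$. Since $\xi^D$ satisfies $(L\otimes\xi)^D=L^{-1}\otimes\xi^D$, if $\xi_0$ is a fixed line bundle then $L\otimes\xi_0$ is fixed if and only if $L^{\otimes 2}\cong\cO_C$; hence the fixed line bundles form a torsor under the $2$-torsion $\Pic^0(C)[2]$. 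In characteristic $0$ a semiabelian variety has finite $2$-torsion (the unipotent part contributes nothing, the toric part contributes a copy of $\mu_2^r$, and the abelian part $A[2]$ is finite), so $\Fix(\tau)$ meets the line-bundle locus in a finite set.

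It remains to treat the non-locally-free torsion-free sheaves, i.e.\ the boundary $\ov{\Pic}{}^d(C)\setminus\Pic^d(C)$; this is the main obstacle, and I would resolve it by induction on the $\delta$-invariant $\delta(C)=g-\wt g$, where $\wt g$ is the genus of the normalization $\wt C$. Any torsion-free rank-$1$ sheaf $\xi$ that is not locally free is the pushforward $\nu_*\xi'$ along the nontrivial partial normalization $\nu\colon C'\to C$ given by $C'=\Spec_C\cH om(\xi,\xi)$, with $C'$ again integral and $\delta(C')<\delta(C)$; the induced map $\ov{\Pic}(C')\to\ov{\Pic}{}^d(C)$ is a closed immersion, and these images (over the finitely many nontrivial partial normalizations, since $\Sing(C)$ is finite) cover the boundary. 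By Grothendieck duality for the finite morphism $\nu$, using $\omega_{C'}=\nu^!\omega_C$, one has $\nu_*(\xi'^{D})\cong(\nu_*\xi')^{D}$, so $\nu_*$ intertwines the dualizing involutions on $C'$ and $C$; hence the $\tau$-fixed sheaves on this stratum are identified with the fixed locus of the dualizing involution on $\ov{\Pic}(C')$, finite by the inductive hypothesis. Summing over the finitely many strata gives fiberwise finiteness, and hence the proposition. The two delicate points to verify carefully are the compatibility of the duality with $\nu_*$ and the fact that the boundary of $\ov{\Pic}{}^d(C)$ is exhausted by finitely many images of compactified Jacobians of partial normalizations.
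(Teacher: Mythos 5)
Your reduction to fiberwise finiteness of $\pi|_{\Fix(\tau)}$ is clean, and both the torsor argument over the locus of line bundles and the duality compatibility $\nu_*(\xi'^{D})\cong(\nu_*\xi')^{D}$ are correct (the latter is exactly the relative-duality identity the paper also uses). The gap is the second of the two points you flag yourself: the boundary of $\jb(C)$ is \emph{not} in general exhausted by finitely many partial normalizations. A partial normalization corresponds to an intermediate sheaf of rings $\cO_C\subset\cA\subset\nu_*\cO_{\wt{C}}$, and finiteness of $\Sing(C)$ does not bound their number: already for an ordinary quadruple point (a planar singularity, so not excludable for curves in $|\cO_S(H)|$) the intermediate rings form a positive-dimensional family --- enlarging the $2$-dimensional degree-one part of the local ring inside $k^4$ to any $3$-plane containing it yields a $\PP^1$ of subalgebras, each realized as $\cE nd(\nu'_*L)$ for $L$ a line bundle on the corresponding $C'$. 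Consequently your induction does not close: even if each $\jb(C')$ contributes finitely many fixed sheaves, a positive-dimensional family of $C'$'s can sweep out a positive-dimensional $\tau$-fixed locus in $\pi^{-1}(C)$, the pushforwards being pairwise non-isomorphic since they have distinct endomorphism rings. So fiberwise finiteness is not established by your argument, and the paper does not assert it.

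What the paper proves instead is the weaker estimate $\dim\bigl(\pi^{-1}(C)\cap\Fix(\tau)\bigr)<\delta$ for $C$ integral of cogenus $\delta>0$. This is obtained by stratifying $\jb(C)$ by the dimension $\delta'$ of the stabilizer of the $\Pic^0(C)$-action: on the stratum coming from a partial normalization $C'$ of genus $g'=g-\delta'$ the orbits are $g'$-dimensional and, by exactly your torsor/duality argument, meet $\Fix(\tau)$ in finite sets; hence $\Fix(\tau)$ has codimension at least $g'$ in that stratum and dimension at most $\epsilon_{g'}<\delta'\le\delta$. The possibly infinite family of partial normalizations is harmless here because one only bounds a dimension. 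The crucial extra input, absent from your proposal, is that the locus of curves of cogenus $\delta$ in $|\cO_S(H)|$ has codimension at least $\delta$ (\cite[Proposition 2.4.2]{dCRS}); combining the two bounds shows that the part of $\Fix(\tau)$ lying over singular curves has dimension $<g$, so no $g$-dimensional component can live entirely over them. If you replace your finiteness claim by this pair of dimension estimates, the rest of your argument goes through.
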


\begin{proof}
Over a smooth curve $C$, the fiber $\pi^{-1}(C)\cap\Fix(\tau)$ of the induced morphism $\Fix(\tau) \to |\cO_S(H)|$ is finite.
Since every component of $\Fix(\tau)$ is of dimension $g$, it is enough to give a dimension estimate of the fibers over the locus in $ |\cO_S(H)|$ parametrizing singular curves.
Since every  curve in the linear system $ |\cO_S(H)|$ is integral, the two Lagrangian fibrations for $v=(0,h,0)$ and $v=(0,h,1-g)$ are fiberwise isomorphic and one can furthermore choose the isomorphisms to be compatible with the involutions on the two moduli spaces.
It is therefore enough to prove the statement for the case $v=(0,h,1-g)$.

We claim that if $C$ is a curve of cogenus\footnote{The \emph{cogenus} of a reduced curve is the difference between the arithmetic genus and geometric genus.} $\delta>0$ (thus $C$ is singular), then $\dim \pi^{-1}(C)\cap\Fix(\tau) < \delta$.
Since by \cite[Proposition 2.4.2]{dCRS}, the locally closed subset of $ |\cO_S(H)|$ parametrizing curves with cogenus $\delta$ has codimension $\ge \delta$, the proposition follows from the claim.

To prove the claim we use some well-known facts about compactified Jacobian of integral locally planar curves. Standard references are \cite{Rego,Cook-thesis,Beauville-counting}.
Let $C \in |\cO_S(H)|$ be as above, i.e.,  an integral curve of arithmetic genus $g$ and cogenus $\delta >0$.
Since $C$ has locally planar singularities, the degree zero compactified Jacobian $\pi^{-1}(C)=\jb(C)$, parametrizing torsion free sheaves of rank $1$ and degree $0$ on $C$, is irreducible of dimension $g$ and contains $\Pic^0(C)$, the locus parametrizing locally free sheaves, as a dense open subset.
Since $\Fix(\tau) \cap \Pic^0(C)$ is $0$-dimensional, we only have to worry about non locally free sheaves. 
The complement of $\Pic^0(C)$ in $\jb(C)$ is non empty because $C$ is singular and it can be described in the following way.
For every non locally free sheaf $\cF\in\jb(C)$ there exists a unique partial normalization $n'\colon C' \to C$ such that  $n'_* \cO_{C'}=\cE nd(\cF)$ and a unique torsion free sheaf $\cF'$ on $C'$ of rank $1$ such that $\cF=n'_* \cF'$ (see~\cite[Section 2]{Beauville-counting}).
The degree $0$ Picard variety $\Pic^0(C)$ acts on $\jb(C)$ by tensorization and for  $\cF=n'_* \cF'$ as above, $\cL \otimes  \cF \cong \cF$ if and only if $ \cL \in \ker[{n'}^*: \Pic^0(C) \to \Pic^0(C')]$ (see~\cite[Lemma 2.1]{Beauville-counting}).
The involution $\tau$ commutes with the action of $\Pic^0(C)$.

The possible dimensions of the stabilizers of the $\Pic^0(C)$ action on $\jb(C)$ go from  $0$ to $\delta$.
For every  $0\le \delta' \le\delta$, let $Z_{\delta'} \subset \jb(C)$ be the locally closed subset of points whose stabilizer has dimension equal to $ \delta'$.
This defines a finite $\tau$-invariant and $\Pic^0(C)$-invariant stratification on $\jb(C)$ with $Z_0=\Pic^0(C)$.
Now set $g'=g-\delta'$, so that $\dim Z_{\delta'}=g'+\epsilon_{g'}$ for some $\epsilon_{g'} \ge 0$. 
For $\delta'>0$, $Z_{\delta'}$ is strictly contained in $\jb(C)$, so
\begin{equation} \label{epsilon}
g'+\epsilon_{g'}< g \quad \text{and} \quad \epsilon_{g'}< \delta'\le\delta.
\end{equation}

We want to show that
\[
\codim_{Z_{\delta'}} (Z_{\delta'} \cap \Fix(\tau)) \ge g'.
\]
By \eqref{epsilon}, this implies that $\dim Z_{\delta'} \cap \Fix(\tau) \le \epsilon_{g'}<\delta$ and hence the claim.

For every partial normalization $n'\colon C'\to C$, set $g':=g(C')$ and $\delta':=g-g'$.
Then $0<\delta' \le \delta$. For any integer $d$, let $\jb^{d}(C')$ denote the moduli space of degree $d$ torsion free sheaves of rank $1$ on $C'$.
The pushforward map $n'_*: \jb^{-\delta'}(C') \to \jb(C)$ is a closed embedding (see~\cite[Lemma 3.1]{Beauville-counting}) and the image is clearly preserved by the action of $\Pic^0(C)$. In particular there is an induced action of $\Pic^0(C')$ on $n'_*(\jb^{-\delta'}(C'))$, and this action is free on the locus, denoted by $n'_*(\jb^{-\delta'}(C')^\circ)$, of sheaves $\cF$ with $n'_* \cO_{C'}=\cE nd(\cF)$. In particular,  $n'_*(\jb^{-\delta'}(C')^\circ) \subset Z_{\delta'}$.
By relative duality,
\begin{equation} \label{finiteduality}
\cH om_C(n'_*\cF', \cO_C) \cong n_* \cH om_{C'}(\cF', \omega_{n'}),
\end{equation}
where $\omega_{n'}$ is the relative dualizing sheaf of the finite morphism $n'\colon C' \to C$ (see~\cite[Section 3]{Hulek-Laza-Sacca}).
The involution $\tau$ acts on $\jb(C)$ preserving $n'_*(\jb^{-\delta'}(C')^\circ)$ and its $\Pic^0(C')$-orbits.
Consider a sheaf $\cF=n'_* \cF'$ in $n'_*(\jb^{-\delta'}(C')^\circ)$ and  let $\cG$ be another  sheaf in the same $\Pic^0(C')$-orbit.
Write $\cG=n'_* \cG'$ with $\cG'=\cF' \otimes \cL'$ and $\cL' \in \Pic^0(C')$.
Suppose both $\cF$ and $\cG$ are $\tau$-invariant.
Then by  \eqref{finiteduality} and the fact that $n'_*$ is an embedding, $\cL'^{\otimes 2}=\cO_{C'}$. The conclusion is that for each $\Pic^0(C')$-orbit in $n'_*(\jb^{-\delta'}(C')^\circ)$ its intersection with $\Fix(\tau)$ is a torsor over the $2$-torsion line bundles on $C'$ and is thus zero dimensional.
It follows that every irreducible component of the intersection of $\Fix(\tau)$ with $Z_{\delta'}$ has codimension at least $g'$. 
\end{proof}

\begin{proof}[Proof of Proposition~\ref{tuttobene}]
By Proposition~\ref{alldom} the equality in~\eqref{simom} holds.
Since $\Sigma$ is isomorphic to $|\cO_C(H)|\cong \PP^g$ it is irreducible. 

It remains to prove that  $\Omega$ is  irreducible.
Let $\Omega^0:=\Omega \cap \pi^{-1}(|\cO_S(H)|^0)$. The restriction of $\pi$ defines a map
\[
\pi^0\colon\Omega^0\lra |\cO_S(H)|^0
\]
which is proper and a local homeomorphism. It suffices to prove that $\Omega^0$ is connected.  
If $g\ge 3$, we can deform and assume that $(S,h)$ is general, and then this follows at once from Proposition~\ref{montrans}. 

If $g=2$ we argue as follows. 
The map  $f\colon S\to  |\cO_S(H)|^{\vee}\cong\PP^2$ is a double cover ramified over a smooth sextic curve.
Let $D\subset S$ be the ramification divisor.
Let $C_0\in |\cO_S(H)|^0$ be a smooth curve. Then $C_0$ intersects transversely $D$ in $6$ points $x_1,\ldots,x_6$, and
\[
\Pic^0(C_0)[2]\setminus\{0\}=\{x_i-x_j\}_{1\le i< j\le 6}.
\]
Hence it suffices to prove that the  action of $\pi_1(|\cO_S(H)|^0,C_0)$ on the set $\{x_1,\ldots,x_6\}$ is the full symmetric group.
This holds by the Uniform Position Theorem applied to the hyperplane (i.e., line) sections of the branch curve $f(D)\subset\PP^2$.
\end{proof}

\begin{proof}[Proof of Proposition~\ref{effetto}]
By Proposition~\ref{alldom} the equality in~\eqref{piuomeno} holds.
It remains to prove that  ${\mathsf S}^{\pm}$ is  irreducible.
Let  $({\mathsf S}^{\pm})^0:={\mathsf S}^{\pm}\cap \pi^{-1}(|\cO_S(H)|^0)$. The restriction of $\pi$ defines  maps
\[
(\pi^{\pm})^{0}\colon({\mathsf S}^{\pm})^0\lra |\cO_S(H)|^0.
\]
which are proper and local homeomorphisms.
It suffices to prove that $({\mathsf S}^{\pm})^0$ is connected.
If $g\ge 3$ this follows  from Proposition~\ref{montrans} and the theory of theta-characteristics, see for example p.~294 in~\cite{acgh}.

If $g=2$ one argues as in the analogous case in the proof of Proposition~\ref{tuttobene}.
\end{proof}

Recall that an irreducible closed subset $Z$ of a variety $X$ is called \emph{constant cycle} if all points of $Z$ (or all points of a dense open subset of $Z$) are rationally equivalent in $X$.
By \cite[Lemma 3.2]{Lin}, if $X$ is a HK manifold then this is equivalent to the cycle map $\CH_0(Z)_\QQ \to \CH_0(X)_\QQ $ being constant.

\begin{prop}\label{prop:constantcycle}
Let $M$ be  $M_h(0,h,1-g)$, respectively $M_h(0,h,0)$.
The irreducible subvarieties  $\Sigma, \Omega \subset M$, respectively $\mathsf{S}^-, \mathsf{S}^+ \subset M$, are constant cycle subvarieties.
In particular, they have no non trivial holomorphic forms.
\end{prop}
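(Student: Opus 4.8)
The plan is to separate the rational component $\Sigma$ from the multisections $\Omega$ and $\mathsf{S}^{\pm}$, and to exploit the group structure of the Lagrangian (Jacobian) fibration $\pi$ together with the constant-cycle theory of Lin~\cite{Lin}. For $\Sigma\subset M_h(0,h,1-g)$ the claim is immediate: the assignment $C\mapsto[i_{C,*}\cO_C]$ identifies $\Sigma$ with $|\cO_S(H)|\cong\PP^g$, so $\Sigma$ is rational, hence rationally chain connected, and therefore all of its points are already rationally equivalent inside $\Sigma$; a fortiori they are rationally equivalent in $M$, so $\Sigma$ is a constant cycle subvariety and $H^0(\Sigma,\Omega^p_\Sigma)=0$ for $p>0$.

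For the remaining components I would first restrict to the open locus $B^0=|\cO_S(H)|^0$ of smooth curves, over which $\pi^{-1}(B^0)\to B^0$ is an abelian scheme $\mathcal A\to B^0$, with zero section $\Sigma^0$ in the case $v=(0,h,1-g)$. Under this structure $\Omega^0:=\Omega\cap\pi^{-1}(B^0)$ is exactly the nontrivial $2$-torsion $\mathcal A[2]\smallsetminus\{0\}$, which is irreducible by Proposition~\ref{tuttobene} (the monodromy of Proposition~\ref{montrans} acts transitively on the $2$-torsion). On $M_h(0,h,0)$ the fibers of $\pi$ are the torsors $\Pic^{g-1}(C)$, and $\mathsf{S}^{\pm}$ are irreducible by Proposition~\ref{effetto}; after choosing one theta characteristic as origin — which identifies the torsor with $\mathcal A$ and is available only after an \'etale base change over $B^0$, harmless here — the $\mathsf{S}^{\pm}$ become torsion multisections as well, since any two theta characteristics differ by a $2$-torsion point. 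The goal is then to prove that a torsion multisection of $\pi$ is constant cycle. Conceptually this is forced by Roitman's theorem: since $M$ is hyperk\"ahler we have $\Alb(M)=0$, so $\CH_0(M)_{\deg 0}$ is torsion free; it therefore suffices to show that for a torsion section $\zeta$ of order $N$ and a point $\xi=\zeta(b)$ one has $N\bigl([\xi]-[e(b)]\bigr)=0$ in $\CH_0(M)$, which, combined with torsion-freeness and the constant cycle nature of the zero section, yields $[\xi]=[e(b)]$. Equivalently, one must show that fiberwise translation by a torsion section acts trivially on $\CH_0(M)$.

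This last point is precisely the content of Lin's results on Lagrangian fibrations~\cite{Lin}: the zero section is a constant cycle subvariety, and more generally every torsion multisection of an abelian Lagrangian fibration on a projective hyperk\"ahler manifold is constant cycle. I would therefore invoke \cite{Lin} to conclude that $\Omega$ and $\mathsf{S}^{\pm}$, being (translates of) torsion multisections, are constant cycle; by Proposition~\ref{alldom} every irreducible component of $\Fix(\tau)$ dominates $\PP^g$, so no exotic component supported over the discriminant can occur and the argument covers all of $\Fix(\tau)$. The vanishing of holomorphic forms then follows from the same circle of ideas: a smooth constant cycle Lagrangian subvariety of a hyperk\"ahler manifold carries no nonzero holomorphic $p$-form for $p>0$ (again \cite{Lin}, using $\Omega^1_Z\cong N_{Z/M}$ and the Bloch--Srinivas decomposition of the diagonal), and each component of $\Fix(\tau)$ is smooth because $\tau$ is an antisymplectic involution.

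The main obstacle is exactly the step hidden inside Lin's theorem, namely that translation by a torsion section acts trivially on $\CH_0(M)$. A naive specialization of $\xi$ to a point over a rational nodal curve $C_0\in|\cO_S(H)|$, where the compactified Jacobian $\pi^{-1}(C_0)$ is a rational variety and hence collapses to a single class, does \emph{not} suffice, because rational equivalence is not preserved under specialization (the general point of a K3 surface is not rationally equivalent to a Beauville--Voisin point even though it degenerates to one). Overcoming this requires propagating the rational equivalence across the entire family by means of the group law, which is where the Lagrangian fibration structure and Roitman's torsion-freeness become essential; in the theta-characteristic case one must additionally be careful that the torsor $\Pic^{g-1}$ has no canonical origin, so the reduction to a genuine torsion section is only available after the base change mentioned above.
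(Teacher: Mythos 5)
Your proposal is correct and follows essentially the same route as the paper: both rest on Lin's result that torsion points in the fibers of the Lagrangian fibration are constant cycle (via the identity $D[x]=\cH_t^g$ for a symmetric ample line bundle on a general fiber, combined with the fact that all fibers are rationally equivalent in $M$), together with Mumford--Roitman for the vanishing of holomorphic forms. The paper's only streamlining is to take the origin of each smooth fiber to be a fixed point of $\tau$, so that $\Fix(\tau)\cap M_t$ is exactly the $2$-torsion and the $\tau$-invariant ample class is automatically symmetric; this treats $\Sigma$, $\Omega$ and $\mathsf{S}^{\pm}$ uniformly and dispenses with your separate rationality argument for $\Sigma$ and the \'etale base change needed to put an origin on the $\Pic^{g-1}$ torsor.
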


\begin{proof}
The second statement follows from the first using the Mumford-Roitman Theorem \cite{Mumford:Chow,Roi:0cycles}, \cite[Theorem 10.17] {Voisin:book2}.
The first statement follows from arguments in \cite{Lin}, which we now recall for the reader's sake.
Let $\cH$ be an ample $\tau$-invariant line bundle on $M$. For a general fiber $M_t$ of $\pi$, take for the origin one of the fixed points of $\tau$.
The restriction $\cH_t$  is a symmetric line bundle with respect to this group structure on the abelian variety $M_t$ and the points of $\Fix(\tau) \cap M_t$ are precisely the two torsion points of $M_t$.
By~\cite[Lemma~3.4]{Lin}, for any $2$-torsion point $x \in M_t$, the class $D[x] $ is rationally equivalent to $\cH_t^g$, where $D=\deg(\cH_t^g)$.
Since all fibers $M_t$ are rationally equivalent in $M$, it follows that all points of an irreducible component of $\Fix(\tau)$ are rationally equivalent in $M$.
\end{proof}

\subsection{Proof of the Main Theorem, divisibility~1}\label{subsec:ProofThm1}

As an immediate application of the results in the previous section, we can prove the Main Theorem, in the easier case of divisibility~1.
We just need two preliminary observations.

\begin{prop}\label{prop:SminusContained}
Let $(S,h)$ be a polarized K3 surface of genus $g$ with $\NS(S)=\ZZ\cdot h$.
We keep the notation of Sections~\ref{subsec:div1} and~\ref{subsec:Trigonal}.
\begin{enumerate}[(i)]
\item \label{enum:Thetaodd} The irreducible subvariety $\mathsf{S}^-$ is contained in $\Delta$, it is uniruled, and the ruling is induced by the ruling of $\Delta$.
\item \label{enum:Thetaeven} The irreducible subvariety $\mathsf{S}^+$ is not contained in $\Delta$.
\end{enumerate}
\end{prop}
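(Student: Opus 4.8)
The plan is to deduce both statements from a single dimension computation on the strata of $\Delta$. Recall from Lemma~\ref{lem:InvolutionDiv1} that $\tau$ preserves each stratum $\Delta(k)$ of the stratification of Lemma~\ref{lemm:Delstratdiv1}, compatibly with the Grassmannian bundle $\Delta(k)=\Gr(k,\cU_{2k})\to M^{\mathrm{st}}_{\overline\sigma}(b_k)$; write $\overline\tau_k$ for the involution induced by $\Phi=\mathrm{ST}_{\cO_S}\circ\D$ on the base. Since a point of $\Gr(k,\cU_{2k})$ over $F_k$ is sent by $\tau$ to a point over $\overline\tau_k(F_k)$, the fixed locus $\Fix(\tau)\cap\Delta(k)$ is a bundle over $\Fix(\overline\tau_k)$ whose fibre over each fixed $F_k$ is the fixed locus of the Mukai involution on $\Gr(k,U_{2k})$, namely the Lagrangian Grassmannian $\mathrm{LGr}(k,U_{2k})$ (Remark~\ref{rmk:FixedGrassDiv1}). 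As $\Phi$ is the composite of the autoequivalence $\mathrm{ST}_{\cO_S}$ with the anti-autoequivalence $\D$, it acts as $-1$ on the symplectic form, so $\overline\tau_k$ is antisymplectic and $\Fix(\overline\tau_k)$ is isotropic, whence $\dim\Fix(\overline\tau_k)\le g-k^2$. Together with $\dim\mathrm{LGr}(k,U_{2k})=k(k+1)/2$ this gives
\begin{equation*}
\dim\bigl(\Fix(\tau)\cap\Delta(k)\bigr)\ \le\ (g-k^2)+\frac{k(k+1)}{2}\ =\ g-\frac{k(k-1)}{2},
\end{equation*}
which equals $g$ exactly when $k=1$ and is strictly less than $g$ for every $k\ge 2$.

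For~\ref{enum:Thetaodd}, the containment $\mathsf S^-\subset\Delta$ is immediate: an odd theta-characteristic $\xi$ has $h^0(C,\xi)$ odd, hence positive, so $[i_{C,*}\xi]\in\Delta$ by~\eqref{supposta}, and one passes to closures. Because $\mathsf S^-\subset\Fix(\tau)\cap\Delta$ is irreducible of dimension $g$ while $\dim(\Fix(\tau)\cap\Delta(k))<g$ for $k\ge2$, the generic point of $\mathsf S^-$ must lie in $\Delta(1)$ and thus have $h^0=1$; in particular $\mathsf S^-\cap\Delta(1)$ is dense in $\mathsf S^-$. Now $\mathrm{LGr}(1,U_2)=\PP(U_2)=\PP^1$ is the \emph{entire} fibre of the ruling $\Delta(1)=\PP(\cU_2)\to M^{\mathrm{st}}_{\overline\sigma}(b_1)$, so over each point of $\Fix(\overline\tau_1)$ the corresponding ruling line of $\Delta$ is pointwise fixed by $\tau$. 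For a generic $x\in\mathsf S^-\cap\Delta(1)$ the image $F_1$ is $\overline\tau_1$-fixed, so the ruling line of $\Delta$ through $x$ is $\tau$-fixed; being irreducible, connected and meeting $\mathsf S^-$, it lies in $\mathsf S^-$, since $\Fix(\tau)=\mathsf S^-\sqcup\mathsf S^+$ is the disjoint union of its (closed, connected) components. Hence $\mathsf S^-$ is swept out by ruling lines of $\Delta$, which is precisely the assertion that it is uniruled with ruling induced by that of $\Delta$.

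For~\ref{enum:Thetaeven}, I would argue by contradiction: if $\mathsf S^+\subset\Delta$, its generic point lies in some stratum $\Delta(k_0)$. Along $\mathsf S^+$ the invariant $h^0$ is even (over a smooth curve it is the dimension of sections of an even theta-characteristic), and on $\Delta$ it is positive, so $k_0\ge2$. But then $\mathsf S^+\subset\Fix(\tau)\cap\overline{\Delta(k_0)}$, of dimension at most $g-k_0(k_0-1)/2<g=\dim\mathsf S^+$, a contradiction. Therefore $\mathsf S^+\not\subset\Delta$.

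The main difficulty is entirely the \emph{fibrewise} input: identifying $\Fix(\tau)$ inside each $\Gr(k,U_{2k})$ and bounding its dimension. For~\ref{enum:Thetaodd} only $k=1$ intervenes, where $\mathrm{LGr}(1,U_2)=\PP^1$ is elementary; the delicate point is that the whole ruling line is fixed, which is what forces the ruling of $\mathsf S^-$ to coincide with that of $\Delta$. For~\ref{enum:Thetaeven} one needs the bound $\dim\bigl(\Fix(\tau)\cap\Gr(k,U_{2k})\bigr)\le k(k+1)/2<k^2$ for all $k\ge2$; this is exactly the content of the description of the fixed locus of the Mukai involution in Remark~\ref{rmk:FixedGrassDiv1}, and rests on the identification of the fibrewise involution in Lemma~\ref{lem:InvolutionDiv1} (see also~\cite{Bea:MukaiInvolution,Kieran:Involutions}). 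Isolating and proving this uniform bound—equivalently, that the antisymplectic involution $\overline\tau_k$ has isotropic fixed locus and that the fibre fixed locus drops in dimension for $k\ge2$—is the one place where genuine geometric input beyond the formal bookkeeping of the previous lemmas is required.
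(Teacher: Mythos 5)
Your proof is correct in substance but takes a genuinely different route from the paper's. For part~\ref{enum:Thetaeven} the paper argues in one line: on a general smooth curve every even theta-characteristic is non-effective (citing \cite{Harris-theta}), so a general point of $\mathsf S^+$ has $h^0=0$ and lies off $\Delta$; and for the ruling in part~\ref{enum:Thetaodd} it uses the explicit classical picture from \cite{Farkas-Verra}: a general odd theta-characteristic satisfies $h^0=1$, hence equals $\cO_C(D)$ for a unique effective $D$ of degree $g-1$, and the pencil of curves in $|\cO_S(H)|$ through $D$ (a ruling member of $\Delta$ in the sense of Remark~\ref{delta=theta}) consists of theta-characteristics, so lies in $\mathsf S^-$. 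You instead run everything through the stratification of Lemma~\ref{lemm:Delstratdiv1} and the equivariance of Lemma~\ref{lem:InvolutionDiv1}, getting the uniform bound $\dim(\Fix(\tau)\cap\Delta(k))\le g-k(k-1)/2$; this is an attractive unification (one computation yields both the generic $h^0=1$ on $\mathsf S^-$ and the non-containment of $\mathsf S^+$, with no input from Brill--Noether theory of theta-characteristics), and your observation that the involution is trivial on the whole $\PP^1$-fibre of $\Delta(1)$ over a fixed point of the base is a clean substitute for the Farkas--Verra pencil argument. Two caveats. First, your fibrewise input --- that $\Fix(\tau)$ meets each Grassmannian fibre $\Gr(k,U_{2k})$, $k\ge2$, in a locus of dimension $k(k+1)/2<k^2$ --- is exactly the content of Remark~\ref{rmk:FixedGrassDiv1}, which this paper explicitly defers to \cite{involutions2}; to make your argument self-contained you should extract from the proof of Lemma~\ref{lem:InvolutionDiv1} that the fibre involution has the form $W\mapsto W^{\perp}$ for a nondegenerate bilinear form on $U_{2k}$ (so its fixed locus is a Lagrangian or orthogonal Grassmannian, of dimension $k(k\pm1)/2<k^2$), rather than quoting the Remark. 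Second, you silently identify the $\PP^1$-fibres of $\Delta(1)\to M^{\mathrm{st}}_{\overline\sigma}(b_1)$ with ``the ruling of $\Delta$'' of Remark~\ref{delta=theta}; this is true (both are the generic fibres of the contraction $\phi|_{\Delta}$), but deserves a sentence. Neither point is a fatal gap, but the paper's proof avoids both by leaning on classical curve theory.
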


\begin{proof}
The first statement of Part~\ref{enum:Thetaodd} is clear by the definitions of $\mathsf{S}^-$ and $\Delta$; see Remark~\ref{delta=theta}.
The second statement is \cite[Proposition 2.2]{Farkas-Verra}: let $\eta$ be an odd theta characteristic on a smooth curve $C$ corresponding to a general point in $\mathsf{S}^-$.
Then $h^0(C,\eta)=1$ and $\eta=\mathcal{O}_C(D)$ for a unique effective divisor of degree $g-1$ on $C$.
The member of the ruling of $\Delta$ corresponding to the pencil of curves in $|\mathcal{O}_S(H)|$ containing $D$ is then clearly contained in $\mathsf{S}^-$. 
The statement in Part~\ref{enum:Thetaeven} follows from the well known fact that on every smooth curve there exist non-effective theta characteristics (see~\cite[Theorem 1.10(ii)]{Harris-theta} and use the fact that for the general curve there are no effective even theta characteristics).
\end{proof}

\begin{coro}\label{cor:Essepmbar}
Let $\overline{\mathsf{S}}^\pm$ be the image of $\mathsf{S}^\pm$ under the divisorial contraction $\varphi: M \to \overline{M}$.
\begin{enumerate}[(i)]
\item The general fiber of the induced morphism $\varphi: \mathsf{S}^- \to \overline {\mathsf{S}}^-$ has dimension $1$ and thus $\dim \overline{\mathsf{S}}^-= g-1$.
\item The induced morphism $\varphi: \mathsf{S}^+ \to \overline{\mathsf{S}}^+$ is birational.
\end{enumerate}
\end{coro}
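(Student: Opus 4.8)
The plan is to read off both statements from the structure of the divisorial contraction $\varphi$ of Lemma~\ref{lem:DivContractionDiv1}, combined with Proposition~\ref{prop:SminusContained} and the description of the ruling of $\Delta$ in Remark~\ref{delta=theta}.

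For Part~(i), I would begin by noting that $\mathsf{S}^-$, being an irreducible component of the fixed locus of the antisymplectic involution $\tau$, is a Lagrangian subvariety of the $2g$-dimensional manifold $M$, so $\dim \mathsf{S}^- = g$. By Proposition~\ref{prop:SminusContained}\ref{enum:Thetaodd} it is contained in $\Delta$ and is ruled by the restriction of the ruling of $\Delta$. Now the exceptional divisor $\Delta$ is contracted by $\varphi$ with general fibre a single $\PP^1$ (Section~\ref{sec:surgery}, following~\cite{markman:modular}), and these contracted $\PP^1$'s are exactly the fibres of the ruling of $\Delta$ from Remark~\ref{delta=theta}: both are fibrations of $\Delta$ over a $(2g-2)$-dimensional base (birationally $S^{[g-1]}$), so they agree, the ruling members being the covering family of $\lambda$-trivial rational curves on $\Delta$. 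Consequently $\varphi$ contracts the ruling of $\mathsf{S}^-$, so every fibre of $\varphi|_{\mathsf{S}^-}$ has dimension at least $1$; and over a general point of $\varphi(\Delta)$, where the fibre of $\varphi|_\Delta$ is a single $\PP^1$, the fibre of $\varphi|_{\mathsf{S}^-}$ is contained in that $\PP^1$ while already containing a ruling member, hence equals it and has dimension exactly $1$. Therefore $\dim \overline{\mathsf{S}}^- = \dim\varphi(\mathsf{S}^-) = g-1$.

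Part~(ii) is then immediate: $\varphi$ is an isomorphism over the complement of its exceptional divisor $\Delta$, and by Proposition~\ref{prop:SminusContained}\ref{enum:Thetaeven} the irreducible variety $\mathsf{S}^+$ is not contained in $\Delta$. Hence $\mathsf{S}^+\cap\Delta$ is a proper closed subset of $\mathsf{S}^+$ and $\varphi$ restricts to an isomorphism on the dense open subset $\mathsf{S}^+\setminus\Delta$, so $\varphi|_{\mathsf{S}^+}\colon\mathsf{S}^+\to\overline{\mathsf{S}}^+$ is birational.

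The only delicate point, and the step I would take most care over, is the fibre-dimension count in Part~(i): one must rule out that a positive-dimensional family of ruling fibres of $\mathsf{S}^-$ collapses to a single point of $\overline{\mathsf{S}}^-$, which would make the general fibre of $\varphi|_{\mathsf{S}^-}$ bigger than $1$-dimensional. This is exactly controlled by the fact that the general fibre of the contraction $\varphi|_\Delta$ is a single $\PP^1$; the identification of this $\PP^1$ with a member of the ruling of Remark~\ref{delta=theta} is what ties the two descriptions together, and I would state it explicitly rather than leave it implicit.
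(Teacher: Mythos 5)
Your argument is correct and matches the paper's (the corollary is stated there without proof, as an immediate consequence of Proposition~\ref{prop:SminusContained} together with the structure of the contraction from Lemma~\ref{lem:DivContractionDiv1} and Lemma~\ref{lemm:Delstratdiv1}): $\mathsf{S}^-$ is swept by contracted ruling curves while its general point lies in the stratum $\Delta(1)$ over which the fibres of $\varphi|_\Delta$ are single $\PP^1$'s, and $\mathsf{S}^+$ meets the locus where $\varphi$ is an isomorphism. The only wording to adjust is ``over a general point of $\varphi(\Delta)$'': you need the single-$\PP^1$ fibre over a general point of $\varphi(\mathsf{S}^-)$ (not of $\varphi(\Delta)$, since $\dim\varphi(\mathsf{S}^-)=g-1<2g-2$), which holds because a general odd theta-characteristic satisfies $h^0(C,\eta)=1$, i.e., a general point of $\mathsf{S}^-$ lies in $\Delta(1)$.
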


\begin{proof}[Proof of the Main Theorem, divisibility 1 case.]
We keep notation as above, so that $\overline M$ is the singular model of $M$ under the divisorial contraction of Lemma~\ref{lem:DivContractionDiv1}, $\tau$ is the involution on $M$ of Proposition~\ref{azione} and $\overline\tau$ is the induced involution on $\overline M$.

As observed in Remark~\ref{rmk:Section2appliesDiv1}, we can apply the results in Section~\ref{sec:surgery}.
Let $\mathcal M \to D$ be the pullback of the universal family over $\Def(\overline M)$ to a general disc  $D \subset \Def(\overline M, \overline \tau)$. 
By Proposition~\ref{uscire} and Corollary~\ref{unaclasse}, $\mathcal M \to D$ is a smoothing of $\overline M$ and there is a global involution $\mathfrak{T}$ on $\mathcal M$ which preserves the fibers $\mathcal M_t$ and is such that, for $t \neq 0$, $\mathfrak{T}_t$ is the involution whose fixed locus we are interested in and for $t=0$, $\mathfrak{T}_0=\overline \tau$. 
Let $\Fix(\mathfrak{T}, \mathcal M)$ be the fixed locus of $\mathfrak{T}$ on $\mathcal{M}$. The fiber at zero satisfies $\Fix(\mathfrak{T}, \mathcal M)_0=\Fix(\overline \tau, \overline M)$ which, by Corollary \ref{cor:Essepmbar}, is the disjoint union $\overline {\mathsf{S}}^+ \coprod \overline {\mathsf{S}}^-$ of a $g$-dimensional component and a $(g-1)$-dimensional component.
The $(g-1)$-dimensional component $ \overline {\mathsf{S}}^-$ is of course contained in the singular locus of $\overline M$.

At a general point of  $\overline {\mathsf{S}}^+$, a tangent space computation shows that  $\Fix(\mathfrak{T}, \mathcal M)$ is smooth of dimension $g+1$ and that the morphism $\Fix(\mathfrak{T}, \mathcal M) \to D$ is smooth of relative dimension $g$.
Let $\mathcal M^*:=\mathcal M \setminus \mathcal M_0$ and let $\mathcal F:= \overline{\Fix(\mathfrak{T}, \mathcal M^*})$ be the closure of the fixed locus on $\mathcal M^*$.
Then $\mathcal F\to D$ is a flat surjective morphism, which is generically smooth along the central fiber $\mathcal F_0=\overline {\mathsf{S}}^+$, and so $\mathcal F_t$ is connected for any $t \in D$.
\end{proof}


\section{Proof of the Main Theorem in divisibility~2}\label{sec:ExplicitMMP}

In this section we complete the proof of the Main Theorem in the divisibility~2 case.
The approach is similar to the case of divisibility~1, with two extra issues. First, we describe how the fixed locus of $\tau$ in the Lagrangian fibration behaves under the flopping transformations of Section~\ref{subsec:div2} and we verify that no connected component of this fixed locus is created or destroyed in the process.
This, which is the content of Section~\ref{subsec:FlopInvolutions}, immediately gives that there are at most two connected components in the singular model $\overline{M}$.
The second problem is to show that there are exactly two components. This is done in Section~\ref{subsec:linearization}.
To deal with this, we notice that the line bundle $L_{\mathrm{last}}$ on $M_{\mathrm{last}}$ with $\mathrm{c}_1(L_{\mathrm{last}})=\lambda$ has a natural linearization with respect to the action of the involution $\tau$.
Then we study the action of $\tau$ on the fibers of $L_{\mathrm{last}}$ over the fixed locus: the two connected components correspond to different irreducible representations.
This will show that even in the singular model $\overline{M}$ the two connected components are disjoint and, since they both intersect the smooth locus, they each are limits of a component of the fixed locus we are interested in.

We start with a general section on flops and involutions on HK manifolds, Section~\ref{subsec:FlopInvolutionsGeneral}, where we outline the geometric picture of what happens to the fixed loci when we pass through a Mukai flop.

\subsection{Flops and involutions, general results}\label{subsec:FlopInvolutionsGeneral}

The goal of this section is to describe locally the birational transformation induced on the fixed loci by a Mukai flop.

We start with elementary considerations.
Let $V$ be a vector space and let $\tau$ be an involution on $\PP V$.
We can linearize the action to lift the involution to $V$ and get dual actions on $V^\vee$ and $\PP V^\vee$.
By abuse of notation we denote all of these actions by $\tau$.
The dual actions on $V$ and $V^\vee$ are such that the $(\pm 1)$-eigenspaces $V^\pm$ and $(V^\vee)^\pm$ satisfy $(V^\vee)^\mp=\mathrm{Ann}(V^\pm)$.
There is a natural action of $\tau$ on the Euler sequence
\[
0 \to \Omega_{\PP V}^1 \to \cO_{\PP V}(-1) \otimes V^\vee \to \cO_{\PP V} \to0,
\]
and the $(-1)$-eigenbundle satisfies
\begin{equation}\label{eq:ActionNormalBundle}
(\Omega^1_{\PP V})^-=(\cO_{\PP V}(-1) \otimes V^\vee)^{-}.
\end{equation}
Notice that while the action on $V$ depends on the choice of the linearization, the action on $\cO_{\PP V}(-1) \otimes V^\vee$ does not.
A similar statement also holds true in the relative setting, when we consider a relative action on a projective bundle over a base.

We now consider the following setting.
Let $\hat M$ be an open (in the analytic or Zariski topology) subset of a HK manifold and $\hat P \subset \hat M$ be a coisotropic subvariety which is isomorphic to the projectivization of a vector bundle over a smooth manifold $\hat Z$.
To fix notation,  we let $\mathcal V$ be the vector bundle on $\hat Z$ such that $\hat P \cong \PP_{\hat Z} \mathcal V\xrightarrow{\rho}\hat Z$.
The holomorphic symplectic form then induces  an isomorphism $N_{\hat P/\hat M}\cong \Omega_{\hat P/\hat Z}$ and the restriction of the symplectic form to $\hat P$ is the pullback of a holomorphic symplectic form on $\hat Z$. 
Assume that there is an antisymplectic involution $\tau$ acting on $\hat M$ and preserving the projective bundle $\hat P \to \hat Z$, with induced antisymplectic involution on $\hat Z$ denoted by $\tau_{\hat Z}$.
Let $\psi\colon \hat M \dashrightarrow \hat{M}'$ be the Mukai flop of $\hat M$ along $\hat P$ and denote by $\hat P'$ the exceptional locus of $\psi^{-1}$, so that $\hat P'=\PP_{\hat Z} \mathcal V^\vee\xrightarrow{\rho'}\hat Z$, where $\mathcal V^\vee$ is the dual bundle.
With these assumptions, there is an induced regular antisymplectic involution $\tau'$ on $\hat{M}'$, restricting to the dual action on $\hat P' $.

We want to describe the general behavior of the restriction of $\psi$ to the connected components of $\Fix(\tau, \hat M)$.
We first examine the case where a connected component is contained in $\hat P$.

\begin{prop}\label{prop:fixedindet}
Let $\hat{M}$ be an open subset of a HK manifold and $\hat{P}\subset \hat{M}$ a coisotropic subvariety such that $\hat{P}
\cong \mathbb{P}_{\hat{Z}}\mathcal{V}$ for some vector bundle 
$\mathcal{V}$ over a smooth manifold $ \hat{Z}$. If there is an antisymplectic involution $\tau$ acting on $\hat{M}$ such that $\tau$ preserves the projective bundle $\rho'\colon \hat P\rightarrow \hat Z$ and a component $\hat{F}$ of the fixed locus $\Fix(\tau,\hat M)$ is contained in $\hat{P}$, then the following hold:
\begin{enumerate}[(i)]
\item\label{enum:fixedindet1} There exists a component ${\hat Y}\subset{\hat Z}$ of the fixed locus of the induced antisymplectic involution $\tau_{\hat Z}$ on $\hat Z$ such that ${\hat F}=\rho^{-1}({\hat Y})$.
\item\label{enum:fixedindet2} The Mukai flop $\psi\colon \hat M \dashrightarrow \hat{M}'$ of $\hat M$ along $\hat P$ inducing the regular antisymplectic involution $\tau'$ on $\hat{M}'$ restricting to the dual action on 
$\hat P'=\PP_{\hat Z} \mathcal V^\vee\xrightarrow{\rho'}\hat Z$ has the property that 
the subset $(\rho')^{-1}({\hat Y})\subset {\hat P}'$ is a component  of the fixed locus $\Fix(\tau',\hat M')$.
\end{enumerate}
\end{prop}

\begin{proof}
Part~\ref{enum:fixedindet1} holds by an easy dimension count.
By~\ref{enum:fixedindet1}, the action of $\tau$ on each fiber of $\hat P \to \hat Z$ over a point of $\hat Y$ is trivial.
Since $\rho'\colon {\hat P}'\to {\hat Z}$ is the dual of $\hat P \to \hat Z$ and the action of $\tau'$ is the dual action, it follows that the action of $\tau'$ on each fiber of $\hat P' \to \hat Z$ over a point of $\hat Y$ is trivial.
This proves~\ref{enum:fixedindet2}.
\end{proof}

To complete the general picture, we record here what happens in the case when a connected component is not contained in $\hat P$.

\begin{prop}\label{prop:localmodelflips}
Under the hypotheses of Proposition \ref{prop:fixedindet}, let $\hat F$ be a component of $\Fix(\tau, \hat M)$ such that $\hat F\not\subset \hat P$ and let ${\hat F}'$ be the proper transform of $\hat F$ in $\hat{M}'$.
Then $\hat{F}'$ is a connected component of $\Fix(\tau',\hat{M}')$ and the Mukai flop $\psi$ restricts to a birational map $\phi\colon \hat F \dashrightarrow \hat F'$, which is a finite series of disjoint flips, one for each connected component $\hat \Gamma$ of $\hat F\cap \hat P$.
\end{prop}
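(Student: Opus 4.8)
The plan is to combine two ingredients: the fact that $\psi$ conjugates $\tau$ to $\tau'$ away from the flopped loci, and a local normal form for the restriction of $\psi$ to $\hat F$ near each $\hat\Gamma$, which I will recognize as a standard determinantal (rank-one) flip. First I would record the easy global part. Since $\psi$ restricts to an isomorphism $\hat M\setminus\hat P\xrightarrow{\cong}\hat M'\setminus\hat P'$ carrying $\tau$ to $\tau'$, the image $\psi(\hat F\setminus\hat P)$ is open and dense in $\Fix(\tau',\hat M')$; its closure $\hat F'$ is the proper transform of $\hat F$, is irreducible of dimension $\tfrac12\dim\hat M'$, and is contained in $\Fix(\tau',\hat M')$. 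As this fixed locus is smooth of pure dimension $\tfrac12\dim\hat M'$, an irreducible closed subvariety of that dimension inside it is a connected component; hence $\hat F'$ is a connected component and $\phi=\psi|_{\hat F}\colon\hat F\dashrightarrow\hat F'$ is birational and an isomorphism over $\hat F\setminus\hat P$. All indeterminacy of $\phi$ is therefore concentrated along $\hat F\cap\hat P$, and since distinct connected components $\hat\Gamma$ of $\hat F\cap\hat P$ are disjoint closed subsets, it suffices to produce the flip near each $\hat\Gamma$ separately and the resulting modifications will automatically be disjoint.

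Next I would identify $\hat\Gamma$ and set up the local model. Every point of $\hat F\cap\hat P$ is $\tau$-fixed and lies in $\hat P$, so by the fibrewise description of $\Fix(\tau)\cap\hat P$ it projects to a component $\hat Y$ of $\Fix(\tau_{\hat Z})$ and, after linearizing the $\tau$-action on $\mathcal V|_{\hat Y}=\mathcal V^+\oplus\mathcal V^-$, lies in $\PP_{\hat Y}(\mathcal V^+)\sqcup\PP_{\hat Y}(\mathcal V^-)$. Since $N_{\hat P/\hat M}\cong\Omega_{\hat P/\hat Z}$ and the symplectic form restricts to $\rho^*$ of the form on $\hat Z$, a holomorphic tubular-neighbourhood argument identifies a neighbourhood of $\hat P$ with a neighbourhood of the zero section of the relative cotangent model, in which $\hat P$ is the zero section and, because $\tau$ is antisymplectic and fixes the zero section, $\tau$ acts as the canonical lift of its base action composed with the fibrewise $(-1)$ map; this is exactly the situation encoded by~\eqref{eq:ActionNormalBundle}. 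A direct computation of $(\pm1)$-eigenspaces then shows that the component of $\Fix(\tau)$ through a point of $\PP_{\hat Y}(\mathcal V^+)$ that is \emph{not} contained in $\hat P$ is the total space of the conormal bundle $N^\vee_{\PP_{\hat Y}(\mathcal V^+)/\PP_{\hat Y}(\mathcal V)}$; hence locally $\hat F$ is this conormal variety and $\hat\Gamma=\PP_{\hat Y}(\mathcal V^+)$, and symmetrically with $\mathcal V^-$.

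Finally I would run the flop in this model. Writing a point of the conormal variety as a homomorphism of rank $\le 1$ determined by its image line in $\mathcal V^+$ and its kernel hyperplane, the Mukai flop sends it to the dual point of $T^*\PP_{\hat Z}(\mathcal V^\vee)$; tracing through the annihilator identities $(\mathcal V^\vee)^{\mp}=\mathrm{Ann}(\mathcal V^{\pm})$, the proper transform of $N^\vee_{\PP_{\hat Y}(\mathcal V^+)/\PP_{\hat Y}(\mathcal V)}$ is the conormal variety $N^\vee_{\PP_{\hat Y}((\mathcal V^\vee)^-)/\PP_{\hat Y}(\mathcal V^\vee)}$. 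Both of these are the two small resolutions of the cone over $\hat Y$ of homomorphisms $\mathcal V^-\to\mathcal V^+$ of rank $\le 1$, resolved respectively by the image line and the kernel line; this is precisely the standard determinantal flip, which contracts $\hat\Gamma=\PP_{\hat Y}(\mathcal V^+)$ on the $\hat F$-side and $\PP_{\hat Y}((\mathcal V^\vee)^-)$ on the $\hat F'$-side. Assembling these local pictures over the disjoint components $\hat\Gamma$ exhibits $\phi$ as a finite series of disjoint flips, completing the proof.

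The hard part will be the symplectic/holomorphic local normal form near $\hat P$ together with the precise form of $\tau$ in that model, namely the decomposition of $\tau$ into the canonical lift of its base action composed with the fibrewise $(-1)$. Everything downstream—the conormal description of $\hat F$ and the recognition of the restricted flop as the determinantal flip—is then bookkeeping with eigenbundles and the annihilator relations.
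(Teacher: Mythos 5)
Your global reduction (the proper transform $\hat F'$ is an irreducible closed subvariety of the right dimension inside the smooth Lagrangian $\Fix(\tau',\hat M')$, hence a connected component, and all indeterminacy is concentrated on the disjoint components $\hat\Gamma$ of $\hat F\cap\hat P$), your identification of $\hat\Gamma$ with a projective eigenbundle $\PP_{\hat Y}(\mathcal V^{\pm})$, and your final eigenbundle/annihilator bookkeeping all agree with what the paper does. The gap is the step you yourself flag as ``the hard part'': there is no holomorphic tubular neighbourhood theorem. A neighbourhood of $\hat P$ in $\hat M$ is in general \emph{not} biholomorphic to a neighbourhood of the zero section of $N_{\hat P/\hat M}\cong\Omega_{\hat P/\hat Z}$, and even less is there a reason for $\tau$ to be simultaneously put in the normal form ``canonical lift composed with fibrewise $(-1)$''. (Already for a single Mukai flop, a neighbourhood of $\PP^n\subset\hat M$ need not be isomorphic to a neighbourhood of the zero section of $T^*\PP^n$; only formal or first-order statements are available.) Your subsequent claim that $\hat F$ is \emph{locally the total space} of the conormal bundle $N^\vee_{\PP_{\hat Y}(\mathcal V^+)/\PP_{\hat Y}(\mathcal V)}$ is a statement about the actual germ of $\hat F$ along $\hat\Gamma$, and it cannot be extracted from normal bundle data alone; as written the argument does not close.

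The paper avoids any local normal form. It passes to the common resolution: the proper transform $\tilde F$ of $\hat F$ in $\Bl_{\hat P}\hat M$ is again a component of the fixed locus of an involution on a smooth variety, hence smooth; the map $\tilde F\to\hat F$ has connected fibres and divisorial exceptional locus, so by the universal property of blowing up it factors through a finite birational morphism $\tilde F\to\Bl_{\hat\Gamma}\hat F$ onto a smooth target, which is therefore an isomorphism. Repeating on the other side identifies $\phi$ near $\hat\Gamma$ with the standard flip determined by the two contractions of the common exceptional divisor. The only computation then needed is $N_{\hat\Gamma/\hat F}=(N_{\hat\Gamma/\hat M})^{+}=(N_{\hat P/\hat M})^{+}|_{\hat\Gamma}\cong\cO_p(-1)\otimes\mathrm{Ann}(\mathcal V_{\hat\Gamma})$, obtained by comparing eigenbundles in the conormal sequences for $\hat\Gamma\subset\hat F\subset\hat M$ and $\hat\Gamma\subset\hat P\subset\hat M$ together with the antisymplectic isomorphism $N_{\hat P/\hat M}\cong\Omega_{\hat P/\hat Z}$ and~\eqref{eq:ActionNormalBundle} --- exactly the first-order data you already computed correctly. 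If you replace your tubular neighbourhood step by this blowup argument, the rest of your proof goes through.
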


More precisely, we will show that any connected component $\hat \Gamma$ of $\hat F\cap \hat P$ (with its reduced induced scheme structure) is identified with a projective bundle
\begin{equation}\label{eq:LocalModelFlips}
p\colon \hat \Gamma\cong \PP_{\hat W} \mathcal{V}_{\hat{\Gamma}} \longrightarrow \hat W,
\end{equation}
where $\hat W \subset \hat Z$ is a connected component of $\Fix(\tau_{\hat Z}, \hat Z)$ and $\mathcal V_{\hat{\Gamma}}$ is one of the two eigenbundles of $\mathcal V_{|{\hat W}}$.
The normal bundle of $\hat{\Gamma}$ in $\hat P$ satisfies $N_{\hat{\Gamma}/\hat P} \cong \cO_p(-1) \otimes \mathcal V'_{\hat{\Gamma}}$, where $\mathcal V'_{\hat{\Gamma}}:= Ann(\mathcal V_{\hat{\Gamma}}) \subset \mathcal V^\vee_{|\hat{\Gamma}}$, and locally around $\hat{\Gamma}$ the birational map $\phi$ is the corresponding standard flip of $\hat{F}$ along $\hat{\Gamma}$, namely $\hat{\Gamma}$ is replaced by $\hat{\Gamma}':=\PP_{\hat{W}} \mathcal V'_{\hat{\Gamma}}$.

\begin{proof}
If $\hat{\Gamma}$ is a connected component of $\hat{F}\cap \hat{P}$, then $\hat{\Gamma}$ is a connected component of the fixed locus of $\tau$ on $\hat P$. Thus $\hat{\Gamma}$ is smooth and coincides with a projective bundle $\PP_{\hat{W}} \mathcal V_{\hat{\Gamma}}$, as in~\eqref{eq:LocalModelFlips}.
Let $\tilde F$ be the proper transform of $\hat{F}$ in the blow up of $\hat M$ along $\hat P$.
Then $\tilde F$ is a component of the fixed locus of an involution and thus is smooth.
Hence the birational map $\tilde F \to \hat F$, which has connected fibers, has exceptional locus of codimension one and is isomorphic to a projective bundle over $\hat \Gamma$.
One can see that $\tilde F \to \hat F$ is, in fact, just the blowup of $\hat F$ along $\hat \Gamma$: by the universal property of blowing up, there exist a birational morphism $\tilde F \to \Bl_{\hat \Gamma} \hat F$, which has connected fibers and is finite; it is thus an isomorphism because $\Bl_{\hat \Gamma} \hat F$ is smooth.

To compute the normal bundle of $\hat \Gamma$ in $\hat F$ we look at the natural action of $\tau$ (and $\tau_{\hat Z}$) on several conormal/cotangent bundles sequences and at the corresponding $(\pm 1)$-eigenbundles. 
Comparing the first cotangent bundle sequence for $\hat \Gamma$ in $\hat F$ with that of $\hat \Gamma$ in $\hat M$ and using the fact that action on $\Omega_{\hat \Gamma}$ is trivial and the action on $N_{\hat F/\hat M}^\vee$ is $(-1)$, we see that $N_{\hat \Gamma/\hat F}=(N_{\hat \Gamma/\hat M})^+$.
Similarly, looking at normal bundle sequence for $\hat \Gamma \subset \hat P \subset \hat M$ and using the fact that the action on $N_{\hat \Gamma/\hat P}$ is $(-1)$, we see that $(N_{\hat \Gamma/\hat M})^+=(N_{\hat P/\hat M})^+$.
Since $\tau$ is antisymplectic, the isomorphism $N_{\hat P/\hat M} \cong \Omega_{\hat P/ \hat Z}$ switches the $(\pm 1)$-eigenbundles.
By~\eqref{eq:ActionNormalBundle}, the claim on the normal bundle follows and hence also the one on the structure of $\phi$.
\end{proof}

\subsection{Flops and involutions in a special case}\label{subsec:FlopInvolutions}

In this section, we return to our more specific situation.
We let $(S,h)$ be a polarized K3 surface of genus $g$ such that $4\,|\,g$ and $\mathrm{NS}(S)=\ZZ\cdot h$. We are in the context of Section~\ref{subsec:div2} and keep the notation therein: $M:=M_h(v)$, its birational models are denoted $M_{c,d}$, $M_{\mathrm{last}}$ the divisorial contraction associated to $\lambda$ is denoted by $\phi\colon M_{\mathrm{last}}\to\overline{M}$, and the involution is denoted by $\tau$.
We also recall the two fixed components $\Sigma(\cong\PP^g)$ and $\Omega$ of $\tau$ in $M$ introduced in Proposition~\ref{tuttobene}.
The goal is to show the following result.

\begin{prop}\label{prop:FlopDoesNotIncreaseNumberComponents}
The fixed locus of $\tau$ on $M_{\mathrm{last}}$ has exactly two connected components:
\[
\Fix (\tau, M_{\mathrm{last}}) = \Sigma_{\mathrm{last}} \sqcup \Omega_{\mathrm{last}}.
\]
Moreover, $\Sigma_{\mathrm{last}}$ is rational while $\Omega_{\mathrm{last}}$ is birational to $\Omega$ and has no non trivial holomorphic forms.
\end{prop}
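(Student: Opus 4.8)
The plan is to propagate the two fixed components $\Sigma$ and $\Omega$ of Proposition~\ref{tuttobene} along the chain of Mukai flops
\[
M=M_{0,-1}\dashrightarrow M_{0,0}\dashrightarrow\dots\dashrightarrow M_{\mathrm{last}},
\]
of Lemma~\ref{lem:MukaiFlops}, analysing one flop at a time by means of Propositions~\ref{prop:fixedindet} and~\ref{prop:localmodelflips}. The decisive structural input is the first part of Lemma~\ref{lem:InvolutionDiv2}: at the wall $\sigma_{c,d}$ the induced involution on $Z_{c,d}=M_{\sigma_{c,d}}(a_{c,d})\times M_{\sigma_{c,d}}(v-a_{c,d})$ exchanges the two factors through $\Psi$, so its fixed locus is the graph of $\Psi$, which is connected and isomorphic to $M_{\sigma_{c,d}}(a_{c,d})$. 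By Proposition~\ref{prop:fixedindet} any component of $\Fix(\tau)$ contained in the exceptional locus $P_{c,d}$ is the full preimage $\rho^{-1}(\hat Y)$ of this graph; in particular at each flop there is at most one such \emph{interior} component.

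First I would settle the count. For a single flop $M_{c,d}\dashrightarrow M_{c,d}'$ along $P_{c,d}$, Propositions~\ref{prop:fixedindet} and~\ref{prop:localmodelflips} together set up a bijection between the connected components of the two fixed loci: an interior component $\rho^{-1}(\hat Y)$ is matched with $(\rho')^{-1}(\hat Y)\subset P_{c,d}'$, while a component meeting $P_{c,d}$ but not contained in it is matched with its proper transform, the restricted map being a finite chain of disjoint flips. Triviality of the fibrewise $\tau$-action on $\PP(\mathcal V_{c,d})$ is equivalent to triviality on $\PP(\mathcal V_{c,d}^{\vee})$ because the two actions are dual, so the interior components correspond on the two sides; running the flop backwards shows the correspondence is onto. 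Hence each flop preserves the number of connected components, and since $\Fix(\tau,M)=\Sigma\sqcup\Omega$ we obtain that $\Fix(\tau,M_{\mathrm{last}})$ has exactly two components $\Sigma_{\mathrm{last}}\sqcup\Omega_{\mathrm{last}}$, where $\Sigma_{\mathrm{last}}$ and $\Omega_{\mathrm{last}}$ are by definition the ones matched to $\Sigma$ and $\Omega$.

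Next I would identify the two components birationally. By Example~\ref{ex:FanoCptFirstFlop} the component $\Sigma\cong\PP^g$ is exactly the exceptional locus $P_{0,-1}$ of the first flop over the point $Z_{0,-1}$, hence is the interior component there and becomes the dual $\PP^g$ on $M_{0,0}$. At any later wall its transform is either an exterior component, modified only by flips and therefore still rational, or, at a wall with $d=-1$, again a projective space over a point; it can never be interior at a wall with $d\ge 0$, for then it would equal $\rho^{-1}(\hat Y)$ with $\hat Y\cong M_{\sigma_{c,d}}(a_{c,d})$ carrying a holomorphic two--form, contradicting $\Sigma$ being a constant cycle subvariety (Proposition~\ref{prop:constantcycle}). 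Thus $\Sigma_{\mathrm{last}}$ is rational. For $\Omega$ the goal is to show it remains exterior at every wall: it is disjoint from $\Sigma=P_{0,-1}$, so it survives the first flop untouched, and if it is exterior throughout then all of its transforms are birational to $\Omega$, whence $\Omega_{\mathrm{last}}$ is birational to $\Omega$ and, $h^{p,0}$ being a birational invariant of smooth projective varieties, inherits the vanishing of holomorphic forms from Proposition~\ref{prop:constantcycle}.

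The main obstacle is precisely to rule out $\Omega$, or any of its transforms, being an interior component. The holomorphic two--form argument used for $\Sigma$ disposes of every wall with $d\ge 0$, but it fails at a wall with $d=-1$, where $\hat Y$ is a point and the interior component is a rational $\PP^g$ carrying no forms. To close this gap I would argue that a general point of $\Omega$ never lies on any flopping locus: such a point is a nontrivial two--torsion line bundle $i_{C,*}(\xi)$ on a general smooth $C\in|\cO_S(H)|$, and membership in $P_{c,d}$ would force a nonzero morphism from (or to) a $\sigma_{c,d}$-stable object of Mukai vector $a_{c,d}$, that is a section of an appropriate twist of $\xi$ whose Euler characteristic is negative. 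A general two--torsion $\xi$ satisfies the Brill--Noether condition making this cohomology vanish, so $i_{C,*}(\xi)$ stays $\sigma_{c,d}$-stable for every $\sigma$ on the relevant segment and lies off $P_{c,d}$. Consequently the transform of $\Omega$ is exterior at every wall, the labelling above is consistent, and the proposition follows.
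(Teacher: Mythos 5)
Your overall architecture matches the paper's: propagate $\Sigma$ and $\Omega$ through the chain of flops and use Propositions~\ref{prop:fixedindet} and~\ref{prop:localmodelflips} to see that no component of the fixed locus is created or destroyed. The counting argument is fine, and your treatment of $\Sigma_{\mathrm{last}}$ (allowing its transform to be interior only at walls with $d=-1$, where the exceptional locus is a $\PP^g$ over a point, and excluding the walls with $d\ge 0$ via the holomorphic $2$-form on $M_{\sigma_{c,d}}(a_{c,d})$ and birational invariance of $h^{2,0}$) is a legitimate shortcut past the paper's multiplicity computation for $\Sigma'$, at least for the rationality statement claimed here.

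The gap is in the step you yourself flag as the main obstacle: showing that (the transform of) $\Omega$ never becomes an interior component, in particular at walls with $d=-1$ and $c\ge 1$. Your justification --- that membership in $P_{c,d}$ forces ``a section of an appropriate twist of $\xi$'' and that ``a general two-torsion $\xi$ satisfies the Brill--Noether condition making this cohomology vanish'' --- does not hold up. First, the destabilizing objects at the wall $a_{c,d}=(2c+1,-c\, h,\ast)$ have rank $2c+1$, so for $c\ge 1$ the relevant non-vanishing is $\Hom(R,i_{C,*}\xi)\cong H^0(C,R^\vee_{|C}\otimes\xi)$ for a stable bundle $R$ of rank $2c+1$, not a section of a line-bundle twist of $\xi$. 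Second, and more seriously, $\Pic^0(C)[2]$ is a finite, special subset of $\Pic^0(C)$, so Raynaud-type genericity statements do not apply to it; one must actually prove that the relevant theta-type divisor fails to contain all two-torsion points. The paper closes exactly this gap by a uniform reduction: Lemma~\ref{lem:EremainsStableIfHom0} shows that a destabilization at \emph{any} wall forces $\Hom_S(A,E)\neq 0$ for the single rank-$2$ spherical bundle $A$ (because $(v(A),a_{c,d})<0$ while the other Ext-groups vanish for slope reasons), so every indeterminacy locus lies in $\Delta_{c,d}$; Proposition~\ref{prop:Deltadiv2} identifies $\Delta_{|\Pic^0(C)}$ with a divisor in $|2\Theta|$ (using Raynaud and Bogomolov to prove semistability of $A^\vee_{|C}$); and Lemma~\ref{lem:NoHomsOmega} invokes the classical fact that no nonzero section of $\cO(2\Theta)$ vanishes at all two-torsion points. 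Without these inputs, or a substitute for them, the assertion that $\Omega$ survives as an exterior component at every wall --- hence that $\Omega_{\mathrm{last}}$ is birational to $\Omega$ --- is not established.
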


We adopt the notation of Remark~\ref{rmk:TotallySemistAndExplicitModels}. In particular $f_{c,d}\colon M_{c,d}\dra  M'_{c,d}$ is
the Mukai flop induced by the divisor class $\widetilde{a}_{c,d}$. Recall the definition of $\Delta_{c,d}$ in~\eqref{deltacidi}.

\begin{lemm}\label{lem:EremainsStableIfHom0}
For all $(c,d)$ the indeterminacy locus of the flop $f_{c,d}\colon M_{c,d}\dra M'_{c,d}$ is contained in $\Delta_{c,d}$.
\end{lemm}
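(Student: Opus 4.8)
The plan is to prove the statement by showing that every object $E$ in the indeterminacy locus of $f_{c,d}$ satisfies $\hom_S(A,E)>0$, which is exactly the condition defining $\Delta_{c,d}$ in~\eqref{deltacidi}. By Lemma~\ref{lem:MukaiFlops} and Remark~\ref{rmk:TotallySemistAndExplicitModels}(b), the indeterminacy locus of $f_{c,d}$ is the projective bundle $P_{c,d}$, and a point $E\in M_{c,d}$ lies in $P_{c,d}$ precisely when its Jordan--H\"older filtration with respect to $\sigma_{c,d}=\sigma_{\alpha_{(c,d)},-1/2}$ is a two-term extension
\[
R \to E \to R'
\]
with $R\in M_{\sigma_{c,d}}(a_{c,d})$ and $R'\in M_{\sigma_{c,d}}(v-a_{c,d})$, both $\sigma_{c,d}$-stable of the same phase as $E$. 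So it suffices to produce a nonzero morphism $A\to E$ for every such $E$.

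First I would pin down phases along the segment $\beta=-\tfrac12$. A direct computation with $Z_{\alpha,-1/2}$ shows that $\Re Z_{\alpha,-1/2}(v)=0$ while $\Im Z_{\alpha,-1/2}(v)\neq0$, so $v$ (hence $E$, $R$, $R'$) has phase $\tfrac12$; and that $\Im Z_{\alpha,-1/2}(v(A))=0$ with $\Re Z_{\alpha,-1/2}(v(A))=-\tfrac12+(2g-2)\alpha^2>0$ for all $\alpha>\tfrac{1}{2\sqrt{g-1}}$, so $A$ has phase $0$ and $A[1]$ has phase $1$. The crucial input is that $A$ is $\sigma_{c,d}$-stable of phase $0$, and I expect this to be the main obstacle. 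I would establish it by noting that $A$ is $h$-stable, hence $\sigma_{\alpha,-1/2}$-stable for $\alpha\gg0$, and that the lattice of classes $w$ with $\Im Z_{\alpha,-1/2}(w)=0$ is $\langle(2,-h,0),(0,0,1)\rangle$, in which the only spherical classes are $\pm v(A)$. A hypothetical destabilization of the rigid object $A$ on the segment above the final $\lambda$-wall would produce same-phase stable factors whose classes lie in this degenerate lattice; a Mukai-pairing count using the support property (the factors satisfy $w^2\ge-2$, distinct same-phase factors pair non-negatively, and the total is $(v(A),v(A))=-2$) then excludes this, so $A$ stays stable for every $\alpha>\tfrac{1}{2\sqrt{g-1}}$.

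Granting this, Definition~\ref{def:Bridgeland}(c) yields two vanishings: since $R$ and $A$ have phases $\tfrac12>0$, we get $\Hom_S(R,A)=0$, hence $\Ext^2_S(A,R)=0$ by Serre duality on $S$; and since $A[1]$ and $R'$ have phases $1>\tfrac12$, we get $\Hom_S(A[1],R')=\Hom^{-1}_S(A,R')=0$. Applying $\Hom_S(A,-)$ to the triangle $R\to E\to R'$ produces a long exact sequence in which the second vanishing forces the map $\Hom_S(A,R)\to\Hom_S(A,E)$ to be injective. Thus it is enough to show $\hom_S(A,R)>0$, and the vanishing $\Ext^2_S(A,R)=0$ gives $\hom_S(A,R)\ge\chi(A,R)=-(v(A),a_{c,d})$; so we are reduced to the single numerical inequality $(v(A),a_{c,d})<0$.

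This last point is a Mukai-pairing computation, and it is where hypothesis~\ref{enum:div2Nef1} enters. Writing $v(A)=(2,-h,g/2)$ as in~\eqref{eq:eccoA} and $a_{c,d}=\bigl(2c+1,\,-c\,h,\,\tfrac{(g-1)c^2-d}{2c+1}\bigr)$, one clears denominators and simplifies to
\[
2(2c+1)\,(v(A),a_{c,d}) \;=\; 4d-g-4c(c+1).
\]
Condition~\ref{enum:div2Nef1}, namely $4d+(2c+1)^2\le g-1$, gives $4d-g\le -2-4c(c+1)$, whence $4d-g-4c(c+1)\le -2-8c(c+1)<0$ and therefore $(v(A),a_{c,d})<0$. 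Combining the steps, $\hom_S(A,E)\ge\hom_S(A,R)>0$, so $E\in\Delta_{c,d}$, as desired. The only genuinely delicate point is the $\sigma_{c,d}$-stability of $A$ asserted above; the remainder is phase bookkeeping together with the displayed numerical identity.
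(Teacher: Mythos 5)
Your proof is correct and follows essentially the same route as the paper's: at the wall one has the Jordan--H\"older triangle $R\to E\to R'$, the two vanishings $\Hom_S(A[1],R')=0$ and $\Hom_S(R,A)=0$ (the latter giving $\Ext^2_S(A,R)=0$ by Serre duality) force $\Hom_S(A,R)\hookrightarrow\Hom_S(A,E)$, and condition~\ref{enum:div2Nef1} gives $(v(A),a_{c,d})<0$, hence $\hom_S(A,R)\geq\chi(A,R)>0$; the paper merely phrases this contrapositively, sweeping over all walls below $\alpha_{(c,d)}$ at once. The one place you over-engineer is the point you flag as the ``main obstacle'': full $\sigma_{c,d}$-stability of $A$ is not needed (and your sketch for it via the Mukai-pairing count is the least convincing step), since the two Hom-vanishings only require $A[1]\in\mathcal{P}(1)$, and any object of the heart $\coh^{-1/2}(S)$ with $\Im Z_{\alpha,-1/2}=0$ is automatically semistable of phase $1$ --- which is exactly how the paper dispatches it (``the slope of $A[1]$ is infinite'').
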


\begin{proof}
It suffices to prove that if $E\in M_{c,d}$ and $\Hom_S(A,E)=0$, 
then $E$ is $\sigma_{\alpha,-1/2}$-stable for all $\alpha_{(c,d)}\geq\alpha>\frac{1}{2\sqrt{g-1}}$.
If $E$ is not $\sigma_{\alpha,-1/2}$-stable for some $\alpha_{(c,d)}\geq \alpha>\frac{1}{2\sqrt{g-1}}$, since by Remark~\ref{rmk:TotallySemistAndExplicitModels}(a) there are no totally semistable walls on the segment $\sigma_{\alpha,-1/2}$, then we can assume that there exists $\alpha_{(c',d')}<\alpha_{(c,d)}$ such that $E$ is not $\sigma_{\alpha_{(c',d')},-1/2}$-stable.
Then, we have a Jordan-H\"older filtration with respect to $\sigma_{\alpha_{(c',d')},-1/2}$ of the form
\[
R_{c',d'} \to E \to R_{c',d'}'
\]
with $v(R_{c',d'})=a_{c',d'}$ and $v(R_{c',d'}')=v-a_{c',d'}$.

By applying the functor $\Hom_S(A,\blank)$, we get an exact sequence:
\[
\Hom_S(A,R_{c',d'}'[-1]) \to \Hom_S(A,R_{c',d'}) \to \Hom_S(A,E).
\]
Since $R_{c',d'}'\in\coh^{-1/2}(S)$ is $\sigma_{\alpha_{(c',d')},-1/2}$-stable and the slope of $A[1]$ is infinite, we have
\[
\hom_S(A,R_{c',d'}'[-1])=\hom_S(A[1],R_{c',d'}')=0.
\]
Similarly, by Serre duality, since $R_{c',d'},A[1]\in\coh^{-1/2}(S)$, we have
\[
\hom_S(A,R_{c',d'}[2])=\hom(R_{c',d'},A)=\hom(R_{c',d'},A[1][-1])=0.
\]
An immediate calculation, by using condition \ref{enum:div2Nef1} in Section~\ref{subsec:div2}, shows that
\[
(\delta,\vartheta(a_{c',d'}))\leq -(2c'+1)<0,
\]
which gives then
\[
0\neq \Hom_S(A,R_{c',d'}) \hookrightarrow \Hom_S(A,E),
\]
a contradiction.
\end{proof}

In $M=M_{0,-1}$ the fixed locus $\Fix(\tau,M)$ has two connected components, namely $\Sigma$ and $\Omega$. 
Motivated by Lemma~\ref{lem:EremainsStableIfHom0} we determine whether $\Sigma$ or $\Omega$ is contained in $\Delta=\Delta_{0,-1}$. First we deal with $\Sigma$. 
We need a few preliminary results on the spherical vector bundle $A$.

\begin{lemm}\label{lem:RestrictionAtoCurves}
Let $C\in|\cO_S(H)|$ be a curve.
Then the restriction map
\[
H^0(S,A^\vee) \longrightarrow H^0(C,A^\vee_{|C})
\]
is an isomorphsim and
\[
h^0(S,A^\vee)=h^0(C,A^\vee_{|C})=2+\frac{g}{2}.
\]
\end{lemm}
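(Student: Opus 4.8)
The plan is to reduce the whole statement to the cohomology of $A$ by exploiting a self-duality of $A$ up to twist. The key structural input I would record first is the identity $A^\vee\cong A\otimes\cO_S(H)$, equivalently $A^\vee\otimes\cO_S(-H)\cong A$. This follows by comparing Mukai vectors: from $v(A)=(2,-h,g/2)$ one gets $v(A^\vee)=(2,h,g/2)$ and hence $v(A^\vee\otimes\cO_S(-H))=(2,-h,g/2)=v(A)$; since $A$ is the unique $h$-stable bundle with this spherical Mukai vector (\cite{Muk:K3}) and $A^\vee\otimes\cO_S(-H)$ is again $h$-stable, the two must be isomorphic. Because $C\in|\cO_S(H)|$ gives $\cO_S(-C)\cong\cO_S(-H)$, tensoring the structure sequence $0\to\cO_S(-C)\to\cO_S\to\cO_C\to0$ with the locally free sheaf $A^\vee$ produces
\[
0 \to A \to A^\vee \to A^\vee_{|C} \to 0,
\]
valid for every (possibly singular) curve $C$ in the linear system.

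Next I would establish $H^0(S,A)=H^1(S,A)=0$ directly from the defining sequence \eqref{eq:eccoA}, namely $0\to A\to\cO_S^{\oplus2}\to i_{C',*}(\xi)\to0$ with $\cO_S^{\oplus2}=H^0(C',\xi)\otimes\cO_S$ and evaluation map, for the smooth curve $C'$ used in the construction. On global sections the induced map $H^0(S,\cO_S^{\oplus2})=H^0(C',\xi)\to H^0(C',\xi)$ is the canonical identity, hence an isomorphism; since $H^1(S,\cO_S)=0$, the long exact cohomology sequence forces both $H^0(S,A)=0$ and $H^1(S,A)=0$. Feeding these vanishings into the restriction sequence above yields at once that
\[
H^0(S,A^\vee)\xrightarrow{\ \sim\ }H^0(C,A^\vee_{|C})
\]
is an isomorphism, for every $C\in|\cO_S(H)|$.

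It then remains to evaluate $h^0(S,A^\vee)$. I would dualize \eqref{eq:eccoA} by applying $\mathbf{R}\cH om(\blank,\cO_S)$ and use $\cH om(i_{C',*}\xi,\cO_S)=0$ together with $\mathcal{E}xt^1(i_{C',*}\xi,\cO_S)\cong i_{C',*}(\xi^{-1}\otimes\omega_{C'})$ (relative duality for $i_{C'}$, with $\omega_{C'}\cong\cO_{C'}(H)$ by adjunction), obtaining
\[
0 \to \cO_S^{\oplus2} \to A^\vee \to i_{C',*}(\xi^{-1}\otimes\omega_{C'}) \to 0.
\]
Taking cohomology and using $H^1(S,\cO_S)=0$ gives $h^0(S,A^\vee)=2+h^0(C',\xi^{-1}\otimes\omega_{C'})=2+h^1(C',\xi)$, and a Riemann--Roch computation on $C'$ (with $\deg\xi=g/2+1$, $h^0(C',\xi)=2$, $g(C')=g$) yields $h^1(C',\xi)=g/2$. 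Hence $h^0(S,A^\vee)=2+g/2$, and by the isomorphism of the previous paragraph the same number equals $h^0(C,A^\vee_{|C})$.

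The only genuinely nontrivial ingredient is the self-duality $A^\vee\cong A(H)$, which rests on the uniqueness of the stable spherical bundle with Mukai vector $(2,-h,g/2)$; everything else is a routine cohomology chase. I therefore do not expect a serious obstacle, the main point of care being to keep the argument uniform in $C$ (including singular members of $|\cO_S(H)|$), which it is, since neither the restriction sequence nor the identity $A^\vee(-H)\cong A$ uses smoothness of $C$.
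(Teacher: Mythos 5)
Your proof is correct and follows essentially the same route as the paper: both arguments rest on the defining sequence \eqref{eq:eccoA} to get $h^0(S,A)=h^1(S,A)=0$ and then read off the isomorphism from the restriction sequence $0\to A^\vee(-H)\cong A\to A^\vee\to A^\vee_{|C}\to 0$. The only cosmetic differences are that you derive $A^\vee(-H)\cong A$ from uniqueness of the stable spherical bundle (for a rank-$2$ bundle this also follows directly from $A^\vee\cong A\otimes(\det A)^{-1}$ with $\det A=\cO_S(-H)$) and you compute $h^0(S,A^\vee)$ by dualizing \eqref{eq:eccoA} rather than by Serre duality from $h^2(S,A)$.
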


\begin{proof}
By the explicit description in~\eqref{eq:eccoA}, we deduce immediately that $h^0(S,A)=h^1(S,A)=0$ and, by Serre duality, $h^0(S,A^\vee)=g/2+2$ and $h^1(S,A^\vee)=h^2(S,A^\vee)=0$.

Finally, from the exact sequence
\[
0 \to A^\vee(-H)\cong A \to A^\vee \to A^\vee_{|C} \to 0,
\]
for $C\in |\cO_S(H)|$, we deduce our statement.
\end{proof}

Let  $r:=h^0(S,A^\vee)/2=1+g/4$.
Then $r$ is a positive integer because $4\,|\,g$.

\begin{lemm}\label{lem:generalPointsSectionAvanish}
Let $x_1,\dots,x_r\in S$ be general distinct points.
Then
\[
H^0(S,\mathcal{I}_{\{x_1,\dots,x_r\}}\otimes A^\vee)=0.
\]
\end{lemm}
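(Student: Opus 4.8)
The plan is to reduce the statement to a dimension count on an incidence variety, exploiting the numerical coincidence $h^0(S,A^\vee)=\tfrac g2+2=2r$ recorded in Lemma~\ref{lem:RestrictionAtoCurves}, where $r=1+\tfrac g4$.

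The first and only substantive step is to show that no nonzero section of $A^\vee$ vanishes along a divisor, so that every such section has finite zero locus. Since $\NS(S)=\Z h$, each nonzero effective divisor is linearly equivalent to $mH$ with $m\ge1$, and a section of $A^\vee$ vanishing along it lies in $H^0(S,A^\vee(-mH))$. Using the isomorphism $A^\vee(-H)\cong A$ from Lemma~\ref{lem:RestrictionAtoCurves} one has $A^\vee(-mH)\cong A(-(m-1)H)$, and multiplication by a section of $\cO_S((m-1)H)$ gives an injection $A(-(m-1)H)\hookrightarrow A$; hence $H^0(S,A^\vee(-mH))\hookrightarrow H^0(S,A)=0$ for every $m\ge1$. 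Consequently every nonzero $s\in H^0(S,A^\vee)$ has zero locus $Z(s)$ of dimension zero, in particular with finite support. (Note that global generation of $A^\vee$ is not needed for this.)

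With this in hand I would introduce the incidence variety
\[
I=\bigl\{([s],x_1,\dots,x_r)\in \PP H^0(S,A^\vee)\times S^r \ :\ s(x_i)=0 \text{ for } i=1,\dots,r\bigr\},
\]
a closed subvariety. The fiber of the first projection over $[s]$ is $\bigl(\Supp Z(s)\bigr)^r$, which is finite by the previous paragraph; therefore $\dim I\le \dim \PP H^0(S,A^\vee)=2r-1<2r=\dim S^r$. Hence the second projection $I\to S^r$ is not dominant, and its image is a proper closed subset whose complement contains a dense open set of $r$-tuples of distinct points. For $(x_1,\dots,x_r)$ in this complement there is no nonzero section of $A^\vee$ vanishing at all the $x_i$, which is exactly the assertion $H^0(S,\mathcal{I}_{\{x_1,\dots,x_r\}}\otimes A^\vee)=0$.

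The one point to watch is the divisorial-vanishing step: it is precisely the finiteness of $\Supp Z(s)$ for all $s$ that forces $\dim I$ down to $2r-1$, exactly one below $\dim S^r$, so the argument has no slack and genuinely uses both $h^0(S,A)=0$ and $\NS(S)=\Z h$. Everything else is a formal consequence of the fiber-dimension inequality applied to the two projections from $I$.
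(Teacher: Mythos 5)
Your proof is correct, but it follows a genuinely different route from the paper's. The paper argues by induction on the number of points: using that $A^\vee$ is globally generated and slope stable, it shows that at each stage the subsheaf of $A^\vee$ generated by the sections vanishing at $x_1,\dots,x_i$ must have rank $2$ (a globally generated rank-one subsheaf would be $\cO_S$, whose one-dimensional space of sections is incompatible with the even dimension $h^0=\tfrac g2+2-2i$), so that a general next point imposes two further independent conditions; after $r$ steps the $2r$-dimensional space of sections is exhausted. You instead make a single incidence-variety dimension count, reduced to the fact that every nonzero section of $A^\vee$ has finite zero locus, which you correctly deduce from $H^0(S,A^\vee(-mH))\cong H^0(S,A(-(m-1)H))\hra H^0(S,A)=0$ for $m\ge1$ together with $\NS(S)=\ZZ h$. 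Both arguments hinge on the same tight numerical coincidence $h^0(S,A^\vee)=2r=\dim S^r$. The paper's induction yields slightly more, namely the intermediate statement that each general point imposes exactly two conditions, i.e.\ $h^0(S,\mathcal I_{\{x_1,\dots,x_i\}}\otimes A^\vee)=2r-2i$ for all $i$; your argument gives only the final vanishing but is self-contained at the level of sections, and, as you note, does not use global generation of $A^\vee$. One cosmetic remark: the isomorphism $A^\vee(-H)\cong A$ appears in the proof of Lemma~\ref{lem:RestrictionAtoCurves} rather than in its statement (it is the standard identity $A^\vee\cong A\otimes(\det A)^{-1}$ for the rank-two bundle $A$ with $\det A=\cO_S(-H)$), so it is worth justifying it in that form rather than by citation.
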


\begin{proof}
Since $A^\vee$ is globally generated of rank~2, if $x_1\in S$ then
\[
h^0(S,\mathcal{I}_{x_1}\otimes A^\vee)=h^0(S,A^\vee)-2=\frac g2.
\]
Since $A^\vee$ is slope stable with $\mathrm{c}_1(A)=h$ and $\NS(S)=\ZZ\cdot h$, if $F\subset A^\vee$ is a rank~1 subsheaf then $\mathrm{c}_1(F)=mh$ with $m\leq0$. Thus, if $F$ is globally generated then $m=0$, so $F\cong\cO_S$ and $h^0(S,F)=1$.
Hence, since $g/2>0$ is even, the subsheaf of $A^\vee$ generated by $H^0(S,\mathcal{I}_{x_1}\otimes A^\vee)$ must have rank~2.
It follows that if $x_2\in S$ is general, then
\[
h^0(S,\mathcal{I}_{\{x_1,x_2\}}\otimes A^\vee)=h^0(S,A^\vee)-4=\frac g2 -2.
\]
Iterating, we get the result.
\end{proof}

\begin{lemm}\label{lem:MulteplicityFormula}
In $M$, we have
$\Sigma\subset \Delta$ and 
\[
\mult_{\Sigma}(\Delta)=h^0(C,A^\vee_{|C})=2+\frac{g}{2}.
\]
\end{lemm}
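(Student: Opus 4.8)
The plan is to treat the containment and the multiplicity separately, reducing the latter to a first‑order (cup‑product) computation in the normal direction to $\Sigma$.

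First, the containment $\Sigma\subset\Delta$ is immediate: for $[\cO_C]\in\Sigma$ one has $\Hom_S(A,\iota_{C,*}\cO_C)=H^0(C,A^\vee_{|C})$, which by Lemma~\ref{lem:RestrictionAtoCurves} has dimension $2+\tfrac g2>0$, so $[\cO_C]$ lies in $\supp\Delta$ by Proposition~\ref{prop:Deltadiv2}. For the multiplicity I would work near a general point $[\cO_C]$ of $\Sigma$, with $C$ general and smooth. Representing $Rp_*\,R\mathcal{H}om(\pi_S^* A,\cF)$ (for $\cF$ a (possibly twisted) universal family on $S\times M$) by a two‑term complex $K^0\xrightarrow{\phi}K^1$ of vector bundles of equal rank, the divisor $\Delta$ is cut out by $\det\phi$, and along $\Sigma$ the corank of $\phi$ is the constant $m:=h^0(C,A^\vee_{|C})=2+\tfrac g2$. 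A standard Schur‑complement argument then yields that the multiplicity of the hypersurface $\Delta$ at a general point of $\Sigma$ is $\ge m$, with equality exactly when the linear term of the reduced $m\times m$ block is nondegenerate, i.e. when for some normal vector $\eta$ the induced map from $\ker\phi$ to $\operatorname{coker}\phi$ is an isomorphism. Because $\Sigma$ is the zero section of the Lagrangian fibration $\pi$, the normal space to $\Sigma$ at $[\cO_C]$ is the tangent space to the fibre, $H^1(C,\cO_C)$, and this induced map is the cup product
\[
\cup\eta\colon H^0(C,A^\vee_{|C})\longrightarrow H^1(C,A^\vee_{|C}),\qquad \eta\in H^1(C,\cO_C).
\]
Thus the whole statement reduces to producing one $\eta$ for which $\cup\eta$ is an isomorphism.

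To analyze $\cup\eta$ I would use the adjunction identity $\omega_C\cong\det A^\vee_{|C}$ (valid since $\omega_C=\cO_C(H)|_C$ and $\mathrm{c}_1(A^\vee)=h$). Via Serre duality this identifies $H^1(C,A^\vee_{|C})$ with $H^0(C,A^\vee_{|C})^\vee$ and rewrites $\cup\eta$ as the bilinear form $b_\eta(s,t)=\langle s\wedge t,\eta\rangle$, where $s\wedge t\in H^0(C,\det A^\vee_{|C})=H^0(C,\omega_C)$ and $\langle-,-\rangle$ is the Serre pairing; since $\wedge$ is alternating, $b_\eta$ is alternating (consistently, $m$ is even as $4\mid g$). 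For the point class $\eta_p$ associated to $p\in C$ (the image of $p$ under the canonical map to $\mathbb P H^1(C,\cO_C)$) one finds $b_{\eta_p}(s,t)=\det\big(s(p),t(p)\big)$, the pullback under evaluation at $p$ of the determinant form on the rank‑$2$ fibre $(A^\vee)_p$.

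The key point, and the main obstacle, is to exhibit a nondegenerate $b_\eta$; a naive dimension count is exactly borderline, so the positivity of $A^\vee$ must enter. Here I would invoke Lemma~\ref{lem:generalPointsSectionAvanish}: for $r:=\tfrac m2=1+\tfrac g4$ general points $H^0(S,\mathcal I_{\{x_1,\dots,x_r\}}\otimes A^\vee)=0$, which through the restriction isomorphism of Lemma~\ref{lem:RestrictionAtoCurves} gives $H^0\big(C,A^\vee_{|C}(-p_1-\cdots-p_r)\big)=0$ for general $p_i\in C$. As $\dim H^0(C,A^\vee_{|C})=2r=\dim\bigoplus_{i=1}^r(A^\vee)_{p_i}$, the evaluation map $H^0(C,A^\vee_{|C})\to\bigoplus_{i=1}^r(A^\vee)_{p_i}$ is then an isomorphism. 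Taking $\eta=\sum_{i=1}^r\eta_{p_i}$ gives $b_\eta(s,t)=\sum_{i}\det\big(s(p_i),t(p_i)\big)$, which under this identification is the block‑diagonal sum of the fibrewise (nondegenerate) determinant forms, hence nondegenerate. Therefore $\cup\eta$ is an isomorphism, the leading term of $\det\phi$ along $\Sigma$ does not vanish, and $\mult_\Sigma\Delta=m=2+\tfrac g2$.
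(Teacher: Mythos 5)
Your proof is correct and takes essentially the same route as the paper's: both reduce the multiplicity computation to the fibre direction $H^1(C,\cO_C)$, and both hinge on exactly the same key input, the vanishing $H^0(C,A^\vee_{|C}(-x_1-\cdots-x_r))=0$ for $r=1+\tfrac g4$ suitably general points, extracted from Lemma~\ref{lem:RestrictionAtoCurves} and Lemma~\ref{lem:generalPointsSectionAvanish}. The only divergence is that the paper concludes by citing the Riemann-singularity-type statement \cite[Lemma 6.1]{CMTB:Singularities}, which you instead reprove in-line via the Schur complement and the identification of $\cup\eta$ with the alternating form $b_\eta(s,t)=\langle s\wedge t,\eta\rangle$ coming from $\det A^\vee_{|C}\cong\omega_C$; the one point to phrase as the paper does is to pick the points general in $S$ first and then pass a smooth $C$ through them, so that Lemma~\ref{lem:generalPointsSectionAvanish} genuinely applies.
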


\begin{proof}
It suffices to prove that if $C\in|\cO_S(H)|$ is a smooth curve
then
\[
\mathrm{mult}_{[\cO_C]}\,\Delta|_{\Pic^0(C)} = h^0(C,A_{|C}^\vee).
\]
Let $r=h^0(S,A^\vee)/2$ as above and let $x_1,\dots,x_r\in S$ be general points.
By a dimension count, there exists a smooth curve $C\in |\cO_S(H)|$ containing $x_1,\dots,x_r$.
By Lemma~\ref{lem:generalPointsSectionAvanish}, we have that $H^0(S,\mathcal{I}_{\{x_1,\dots,x_r\}}\otimes A^\vee)=0$.
Since by Lemma~\ref{lem:RestrictionAtoCurves} the restriction map $H^0(S,A^\vee)\to H^0(C,A^\vee_{|C})$ is an isomorphism, we deduce that
\[
H^0(C,A^\vee_{|C}(-(x_1+\dots+x_r))=0.
\]
The statement follows now from \cite[Lemma 6.1]{CMTB:Singularities} (see also \cite{CMF:CubicThreefolds}).
\end{proof}

Next we deal with $\Omega$. 
\begin{lemm}\label{lem:NoHomsOmega}
In $M$, we have $\Omega \not\subset \Delta$. 
\end{lemm}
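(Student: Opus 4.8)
The plan is to show that the general point of $\Omega$ does not lie on $\Delta$. By adjunction and the projection formula, for a smooth curve $C\in|\cO_S(H)|$ and $\xi\in\Pic^0(C)$ one has
\[
\Hom_S(A,i_{C,*}\xi)=H^0(C,A^\vee_{|C}\otimes\xi),
\]
so, recalling from Proposition~\ref{prop:Deltadiv2} that $\Delta=\{F:h^0(S,A^\vee\otimes F)>0\}$, a point $[i_{C,*}\xi]\in\Omega$ lies on $\Delta$ if and only if $h^0(C,A^\vee_{|C}\otimes\xi)>0$. Since $\Omega$ is irreducible by Proposition~\ref{tuttobene} and its points of the form $[i_{C,*}\xi]$ with $C$ smooth and $\xi$ a non-trivial $2$-torsion class are dense, it suffices to produce a single smooth $C$ and a single non-trivial $\xi\in\Pic^0(C)[2]$ with $h^0(C,A^\vee_{|C}\otimes\xi)=0$.

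Fix a smooth $C$ and write $E:=A^\vee_{|C}$, a rank-two bundle of degree $2g-2$ and slope $g-1$. Two structural facts drive the argument. First, as established in the proof of Proposition~\ref{prop:Deltadiv2}, $E$ is semistable and its theta divisor $\Theta_E=\{\xi\in\Pic^0(C):h^0(C,E\otimes\xi)>0\}$ is a genuine divisor of class $2\Theta$, the double of the principal polarization, and it coincides with $\Delta|_{\Pic^0(C)}$. Second, since $C\in|\cO_S(H)|$ yields $\omega_C=\cO_C(H)$ and $A^\vee(-H)\cong A$ (so $A\otimes\cO_S(H)\cong A^\vee$), one computes $E^\vee\otimes\omega_C\cong E$; hence by Serre duality $h^0(C,E\otimes\xi)=h^0(C,E\otimes\xi^{-1})$, so $\Theta_E$ is a \emph{symmetric} member of the class $2\Theta$. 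Note also that $0\in\Theta_E$, since $h^0(C,E)=2+g/2>0$ by Lemma~\ref{lem:RestrictionAtoCurves}.

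I would then argue by contradiction: if $\Omega\subset\Delta$, then by the above every non-trivial $2$-torsion point of $\Pic^0(C)$ lies on $\Theta_E$, and together with $0\in\Theta_E$ this forces $\Pic^0(C)[2]\subset\Theta_E$. At this stage I would invoke the classical fact that for an indecomposable principally polarized abelian variety the map $\phi_{|2\Theta|}\colon\Pic^0(C)\to\PP^{2^g-1}$ sends the $2^{2g}$ two-torsion points to a set that spans $\PP^{2^g-1}$; consequently no single divisor of $|2\Theta|$ can contain all of $\Pic^0(C)[2]$. As the Jacobian of a smooth curve is indecomposable, this contradicts $\Pic^0(C)[2]\subset\Theta_E$ and proves the lemma.

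The main obstacle is precisely this last step: justifying that the two-torsion points cannot all lie on a member of $|2\Theta|$. I would pin down the exact statement in the theory of second-order theta functions, where the spanning of their values at two-torsion points is equivalent to indecomposability. Should a clean citation prove awkward, a fallback is to bypass the global statement and instead exhibit one explicit vanishing: choose a convenient smooth $C$ together with a non-trivial $\xi\in\Pic^0(C)[2]$ for which $h^0(C,E\otimes\xi)=0$ can be verified directly—for instance by adapting the section-vanishing estimate of Lemma~\ref{lem:generalPointsSectionAvanish} after twisting by $\xi$—since by upper-semicontinuity a single such vanishing already yields $\Omega\not\subset\Delta$.
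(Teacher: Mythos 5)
Your argument is correct and is essentially the paper's own proof: both reduce to the observation that $\Delta|_{\Pic^0(C)}$ is a nonzero section of twice the principal polarization on $\Pic^0(C)$ vanishing at the origin $[\cO_C]$, and then invoke the classical fact that no nonzero section of $\cO_V(2\Theta)$ vanishes on all of $V[2]$ (equivalently, the $|2\Theta|$-images of the $2$-torsion points span $\PP^{2^g-1}$). The ``main obstacle'' you flag at the end is not treated as one in the paper, which simply recalls this spanning statement as known, so no fallback via an explicit vanishing is needed.
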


\begin{proof}
If $(V,\Theta)$ is a principally polarized abelian variety of dimension~$g$ and we let
\[
\eta\colon V \to |\cO_V(2\Theta)|^\vee \cong \PP^{2^g-1}
\]
be the morphism  associated to the divisor $2\Theta$, then the points $\eta(x)$, for $x\in V[2]$, span the whole $\PP^{2^g-1}$. This follows from the fact  that $H^0(V,\cO_V(2\Theta))$ is an irreducible representation of the theta group  $\cG(\cO_V(2\Theta))$ (see~\cite{mum-eq-abvars}, Sect.~1, Thm.~2) and the following argument. We have the exact sequence
\[
1 \lra \CC^{*} \lra \cG(\cO_V(2\Theta)) \lra 
K(\cO_V(2\Theta)) \lra 1
\]
where $K(\cO_V(2\Theta))\cong V[2]$ and the center $\CC^{*}$ acts on $H^0(V,\cO_V(2\Theta))$ via the \lq\lq identity\rq\rq\ character (multiplyed by $\Id$). Hence the  representation of the theta group induces a representation of $V[2]$ on 
$|\cO_V(2\Theta)|$ and hence also on $|\cO_V(2\Theta)|^\vee$.
  In the latter representation $\eta(V)$ is mapped to itself and the action corresponds to the  action by translations. Hence the span of $\eta(V[2])$ is invariant for the $V[2]$ action. Since 
 $H^0(V,\cO_V(2\Theta))$ is an irreducible representation of the theta group  $\cG(\cO_V(2\Theta))$,  so is the 
 dual representation $H^0(V,\cO_V(2\Theta))^{\vee}$, and it follows that the span of $\eta(V[2])$ is the whole of $|\cO_V(2\Theta)|^\vee$.
 
Now let  $C\in|\cO_S(H)|$ be a smooth curve. By Proposition~\ref{prop:Deltadiv2} the restriction of 
$\Delta=\Delta_{0,-1}$ to $\pi^{-1}(C)=\Pic^{0}(C)$ is a non zero section of twice the natural principal polarization. Since $\Delta|_{\Pic^{0}(C)}$ vanishes at $[\cO_C]\in \Pic^0(C)[2]$, it follows from the general result on abelian varieties that we have recalled above that there exists  $[\xi]\in \Pic^0(C)[2]$, such that $i_{C,*}(\xi)$ is not contained in $\Delta$.
\end{proof}

We notice that the irreducibility of $\Omega$ (namely, Proposition~\ref{tuttobene}) gives the slightly stronger result that, for general smooth $C\in|\cO_S(H)|$, $i_{C,*}(\xi)$ is not contained in $\Delta$, for all $\xi\neq\cO_C$ in $\Pic^0(C)[2]$.

We now focus our attention on the first flop $f_{0,-1}$, which is the flop of $\Sigma$, see Example~\ref{ex:FanoCptFirstFlop}.
To simplify notation in what follows, we set:
\[
M':=M_{0,0},\ f:=f_{0,-1}\colon M \dra M',\ \Delta':=\Delta_{0,0}.
\]
We moreover let $\Sigma'\subset M'$ be the ``flopped" $\Sigma$ (i.e., the indeterminacy locus of the inverse $f^{-1}$), and we let $\Omega':=f(\Omega)$. Clearly, $\Fix(\tau, M')=\Sigma'\sqcup \Omega'$.

\begin{lemm}\label{lem:NoHomsSigma}
In $M'$, we have $\Omega' \not\subset \Delta'$ and $\Sigma' \not\subset \Delta'$.
\end{lemm}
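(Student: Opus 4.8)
The plan is to treat the two components $\Omega'$ and $\Sigma'$ by rather different means. The component $\Omega'$ is easy: since the flop is an isomorphism away from $\Sigma$, the non-containment $\Omega'\not\subset\Delta'$ transports directly from $\Omega\not\subset\Delta$ (Lemma~\ref{lem:NoHomsOmega}). The component $\Sigma'$ is the genuinely new case, since it is precisely the flopped locus; here I would compute the multiplicity $\mult_{\Sigma'}\Delta'$ on the common resolution of the Mukai flop and show that it vanishes.

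For $\Omega'$: by Proposition~\ref{tuttobene} the fixed locus $\Fix(\tau,M)=\Sigma\sqcup\Omega$ is a disjoint union of smooth components, so $\Omega\cap\Sigma=\emptyset$. By Example~\ref{ex:FanoCptFirstFlop} the flop $f\colon M\dra M'$ is an isomorphism over $M\setminus\Sigma$, hence restricts to an isomorphism $\Omega\xrightarrow{\cong}\Omega'$ with $\Omega'\subset M'\setminus\Sigma'$. On $M'\setminus\Sigma'\cong M\setminus\Sigma$ the strict transform $\Delta'=\Delta_{0,0}$ coincides with $\Delta$; since $\Omega\not\subset\Delta$ by Lemma~\ref{lem:NoHomsOmega}, the generic point of $\Omega$ avoids $\Delta$, so the generic point of $\Omega'$ avoids $\Delta'$ and $\Omega'\not\subset\Delta'$.

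For $\Sigma'$: let $p\colon\wt M\to M$ be the blow-up of $M$ along $\Sigma\cong\PP^g$ (whose normal bundle is $\Omega^1_{\PP^g}$), with exceptional divisor $\mathcal{E}=\PP(\Omega^1_{\PP^g})$; since $f_{0,-1}$ is a Mukai flop (Lemma~\ref{lem:MukaiFlops}, Example~\ref{ex:FanoCptFirstFlop}), the same $\wt M$ is the blow-up $q\colon\wt M\to M'$ of $\Sigma'$ with the same $\mathcal{E}$. Writing $m=\mult_\Sigma\Delta$, $m'=\mult_{\Sigma'}\Delta'$ and $\wt\Delta$ for the common strict transform, we have $p^*\Delta=\wt\Delta+m\,\mathcal{E}$ and $q^*\Delta'=\wt\Delta+m'\,\mathcal{E}$. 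I would then choose a line $\ell'$ in a fibre of $q|_{\mathcal{E}}\colon\mathcal{E}\to\Sigma'$: it satisfies $\mathcal{E}\cdot\ell'=-1$ and, being $q$-contracted, $\wt\Delta\cdot\ell'=m'$; moreover $p$ maps $\ell'$ isomorphically onto a line $\ell_\Sigma\subset\Sigma$. The projection formula gives $p^*\Delta\cdot\ell'=\delta\cdot\ell_\Sigma$, and expanding $p^*\Delta=\wt\Delta+m\,\mathcal{E}$ yields the key relation $m'=m+\delta\cdot\ell_\Sigma$.

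It remains to evaluate the two numbers. By Lemma~\ref{lem:MulteplicityFormula}, $m=g/2+2$. For $\delta\cdot\ell_\Sigma$, observe that $\ell_\Sigma$ is a flopping curve of $f_{0,-1}$, so the wall class $\vartheta(\wt{a}_{0,-1})$ is zero on it; since $\wt{a}_{0,-1}=\lambda+\mu_{0,-1}f=(g/2+2)f+\delta$ (using $\lambda=f+\delta$ and $\mu_{0,-1}=(g+2)/2$) and $f\cdot\ell_\Sigma=1$ (as $\Sigma$ is the zero-section, carried isomorphically to the base $\PP^g$ by $\pi$), one gets $\delta\cdot\ell_\Sigma=-(g/2+2)$. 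Hence $m'=0$, i.e.\ $\Sigma'\not\subset\Delta'$. The step requiring the most care is the multiplicity bookkeeping across the flop---identifying $\wt M$ as the simultaneous blow-up with common exceptional divisor $\mathcal{E}$ and verifying $\mathcal{E}\cdot\ell'=-1$---but this is exactly the standard local model of a Mukai flop of a $\PP^g$ with cotangent normal bundle, so the only substantive geometric inputs are the value $\mult_\Sigma\Delta=g/2+2$ from Lemma~\ref{lem:MulteplicityFormula} and the triviality of the wall divisor on the flopping lines.
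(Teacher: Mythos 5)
Your proof is correct and follows essentially the same route as the paper's: the $\Omega'$ case is handled identically (the flop is an isomorphism away from $\Sigma$), and for $\Sigma'$ you perform the same multiplicity bookkeeping on the common resolution of the Mukai flop, intersecting with a $q$-contracted lift of a line in $\Sigma$ to get $\mult_{\Sigma'}\Delta'=\mult_{\Sigma}\Delta+\delta\cdot\ell_\Sigma=0$. The only cosmetic difference is that you evaluate $\delta\cdot\ell_\Sigma=-(g/2+2)$ via the vanishing of the wall class $\vartheta(\widetilde{a}_{0,-1})$ on the flopping curve together with $f\cdot\ell_\Sigma=1$, whereas the paper computes it directly as the Mukai pairing $\left(\left(2,-h,\tfrac{g}{2}\right),(1,0,1)\right)$; both give the same answer.
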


\begin{proof}
Since $\Omega$ is disjoint from the center of the flop $f$ we have $\Omega' \not\subset \Delta'$ by Lemma~\ref{lem:NoHomsOmega}.
It remains to prove that  $\Sigma' \not\subset \Delta'$.

Let
\begin{equation}\label{eq:firstflopdiagram}
\xymatrix{
& \Xi \ar@{^{(}->}[r] \ar@{->>}[dl] \ar@{.>}[rrrd] & \widetilde{M} \ar@{->>}[dr]^{g'} \ar@{->>}[dl]_{g} && \\
\Sigma \ar@{^{(}->}[r] & M\ar@{<-->}[rr]_{f}^{\cong} && M' & \Sigma'  \ar@{_{(}->}[l]
}
\end{equation}
be the resolution of the Mukai flop $f$, with common exceptional divisor $\Xi$. 
Let $d:=\mathrm{mult}_{\Sigma'}\Delta'$.
We only need to show that $d=0$.

Let us denote by $\widetilde{\Delta}$, respectively $\widetilde{\Delta}'$, the strict transform of $\Delta$, respectively $\Delta'$, on $\widetilde{M}$.
Then $\widetilde{\Delta}=\widetilde{\Delta}'$ and we have
\[
\begin{cases}
g^*\Delta = \widetilde{\Delta} + \left(\frac g2 + 2\right)\, \Xi\\
g^{\prime *}\Delta' = \widetilde{\Delta}' + d\, \Xi,
\end{cases}
\]
where we used that $\mathrm{mult}_{\Sigma}\Delta=g/2+2$, by Lemma~\ref{lem:MulteplicityFormula}.

Let $\widetilde{\gamma}\subset \Xi$ be a lift of a line $\gamma\subset\Sigma$. Then $g'(\widetilde{\gamma})=\mathrm{pt}\in M'$.
Thus, we have
\[
0=g^{\prime *}\Delta'\cdot\widetilde{\gamma} = \left(\widetilde{\Delta}' + d\, \Xi\right)\cdot\widetilde{\gamma} = \widetilde{\Delta}'\cdot\widetilde{\gamma} - d.
\]
Since $\widetilde{\Delta}'\cdot\widetilde{\gamma}=\widetilde{\Delta}\cdot\widetilde{\gamma}$, we get
\[
-\left(\frac g2 +2\right) = \Delta\cdot\gamma = g^*\Delta\cdot\widetilde{\gamma} = \widetilde{\Delta}\cdot\widetilde{\gamma} + \left(\frac g2 +2\right) \Xi\cdot\widetilde{\gamma} = d - \left(\frac g2 +2\right),
\]
and thus $d=0$, as we wanted.
\end{proof}

Concretely, as explained in Example~\ref{ex:FanoCptFirstFlop}, Lemma~\ref{lem:NoHomsSigma} means that for a general non-trivial extension
\[
\cO_S(-H)[1] \to E' \to \cO_S
\]
we have that $\Hom_S(A,E')=0$.

\begin{proof}[Proof of Proposition~\ref{prop:FlopDoesNotIncreaseNumberComponents}.]
Recall that the $(c,d)$ introduced in Subsection~\ref{subsec:div2} are totally ordered. The maximum is $(0,-1)$, and the minimum is ``{\rm last}'': thus    
$(0,-1)\succeq (0,0)\succeq\ldots \succeq {\rm last}$.   
We know that for $M'=M_{0,0}$, we have $\Fix(\tau, M')=\Sigma'\cup\Omega'$ and, by Lemma~\ref{lem:NoHomsSigma}, that neither $\Sigma'$ nor $\Omega'$ is contained in $\Delta'$. 
Let $(0,0)\succeq(c_0,d_0)$. 
By Lemma~\ref{lem:EremainsStableIfHom0} and Lemma~\ref{lem:NoHomsSigma} the rational map $M_{0,0}\dra M_{c,d}$ given by composing successive $f_{c,d}$ for $(0,0)\succeq(c,d)\succeq(c_0,d_0)$ is regular at the generic point of $\Sigma_{0,0}$ and of $\Omega_{0,0}$.
We let 
$\Sigma_{c_0,d_0},\Omega_{c_0,d_0}\subset M_{c_0,d_0}$ be the image of $\Sigma_{0,0},\Omega_{0,0}$ respectively.
Let us prove by descending induction on the $(c,d)'s$ that $\Fix(\tau,M_{c,d})=\Sigma_{c,d}\cup\Omega_{c,d}$.
Thus we assume the result for $M_{c,d}$ and we prove it for $M'_{c,d}$. Assume that it does not hold for $M'_{c,d}$.
Then there is a component of $\Fix(\tau,M'_{c,d})$ which is contained in the indeterminacy locus of $f_{c,d}^{-1}$.
The hypotheses of Proposition~\ref{prop:fixedindet} hold for ${\hat M}=M'_{c,d}$ and $\psi:=f_{c,d}^{-1}$. It follows that there is a component of $\Fix(\tau,M_{c,d})$ which is contained in the indeterminacy locus of $f_{c,d}$, contradicting the inductive hypothesis.
\end{proof}

As already remarked, as an immediate consequence of Proposition~\ref{prop:FlopDoesNotIncreaseNumberComponents} we can use the same argument as in the divisibility~1 case (in Section~\ref{subsec:ProofThm1}) to show that in the divisibility~2 case there are at most two connected components.

\subsection{Linearizations and fixed loci}\label{subsec:linearization}

To complete the proof of the Main Theorem, in the divisibility~2 case, we only have to show the following two results.
We define
\[
\begin{split}
    &\overline{\Sigma} := \phi(\Sigma_{\mathrm{last}})\\
    &\overline{\Omega} := \phi(\Omega_{\mathrm{last}}),
\end{split}
\]
where $\Sigma_{\mathrm{last}}, \Omega_{\mathrm{last}} \subset M_{\mathrm{last}} $ are the two connected components of the fixed locus of $\tau$ on $M_{\mathrm{last}}$. The fact that there are two follows from Proposition~\ref{prop:FlopDoesNotIncreaseNumberComponents}.

\begin{prop}\label{prop:FixedCmptsNotContainedInDelta}
Neither $\Sigma_{\mathrm{last}}$ nor $\Omega_{\mathrm{last}}$ is  contained in the exceptional divisor $\Delta_\mathrm{last}$ of the divisorial contraction $\phi\colon M_{\mathrm{last}}\to\overline{M}$.
\end{prop}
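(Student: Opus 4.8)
The plan is to prove the proposition by descending induction along the chain of Mukai flops $M_{0,0}\dra\cdots\dra M_{\mathrm{last}}$ of Section~\ref{subsec:div2}, propagating the non-containment statement through each flop. Recall from Proposition~\ref{prop:FlopDoesNotIncreaseNumberComponents} that on every model $M_{c,d}$ one has $\Fix(\tau,M_{c,d})=\Sigma_{c,d}\sqcup\Omega_{c,d}$, with $\Sigma_{c,d},\Omega_{c,d}$ the proper transforms of $\Sigma_{0,0},\Omega_{0,0}$. I would show, by induction on $(c,d)$ in the total order $\succeq$ with base point $(0,0)$, that neither $\Sigma_{c,d}$ nor $\Omega_{c,d}$ is contained in the strict transform $\Delta_{c,d}$ of $\Delta$. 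Since $M_{\mathrm{last}}=M'_{c_0,d_0}$ for the minimal pair $(c_0,d_0)$, the terminal case is exactly the assertion of the proposition.

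The base case $(c,d)=(0,0)$ is already in hand: it is precisely Lemma~\ref{lem:NoHomsSigma}, which gives $\Sigma_{0,0}\not\subset\Delta_{0,0}$ and $\Omega_{0,0}\not\subset\Delta_{0,0}$. Note that one cannot begin the induction one step earlier at $M=M_{0,-1}$, since there $\Sigma\subset\Delta$ by Lemma~\ref{lem:MulteplicityFormula}; it is the first flop $f_{0,-1}$ that moves $\Sigma$ off of $\Delta$, and Lemma~\ref{lem:NoHomsSigma} records exactly this. For the inductive step I would argue as follows. By Lemma~\ref{lem:EremainsStableIfHom0} the indeterminacy locus $P_{c,d}$ of $f_{c,d}\colon M_{c,d}\dra M'_{c,d}$ is contained in $\Delta_{c,d}$, and being the center of a Mukai flop it has codimension at least $2$; hence $f_{c,d}$ restricts to an isomorphism between the complements of the flopping loci, under which the divisor $\Delta_{c,d}$ corresponds to its strict transform $\Delta'_{c,d}=\Delta_{c',d'}$. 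By the inductive hypothesis a general point of $\Sigma_{c,d}$ lies in $M_{c,d}\setminus\Delta_{c,d}$, and therefore in $M_{c,d}\setminus P_{c,d}$; under the isomorphism it maps to a general point of the proper transform $\Sigma_{c',d'}$ lying outside $\Delta_{c',d'}$, so $\Sigma_{c',d'}\not\subset\Delta_{c',d'}$. The identical argument applied to $\Omega_{c,d}$ finishes the step.

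I do not expect a genuine obstacle at this stage: the real content has already been extracted in the preceding lemmas, in particular the multiplicity computation $\mathrm{mult}_\Sigma\Delta=2+g/2$ (Lemma~\ref{lem:MulteplicityFormula}) feeding the intersection-theoretic argument on the resolution of the first flop (Lemma~\ref{lem:NoHomsSigma}), together with the stability criterion confining every flopping locus inside $\Delta$ (Lemma~\ref{lem:EremainsStableIfHom0}). The only points requiring care in the write-up are standard: that the strict transform $\Delta_{c',d'}$ genuinely matches $\Delta_{c,d}$ under the codimension-one isomorphism induced by the flop (which holds because $\Delta$ is a divisor not contained in the codimension-$\ge 2$ flop center), and that $\Sigma_{c,d},\Omega_{c,d}$, being Lagrangian of codimension $g\ge 2$, meet $\Delta_{c,d}$, a fortiori $P_{c,d}$, in a proper closed subset, so that their generic points lie in the isomorphism locus. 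Both are immediate once the inductive hypothesis is assumed, so the argument is essentially bookkeeping over the finite sequence of flops.
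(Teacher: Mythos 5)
Your proposal is correct and is essentially the paper's own argument: both rest on Lemma~\ref{lem:EremainsStableIfHom0} (the flop centers lie inside $\Delta_{c,d}$) combined with Lemmas~\ref{lem:NoHomsOmega} and~\ref{lem:NoHomsSigma}, so that a point of $\Omega$ (resp.\ $\Sigma'$) off $\Delta$ (resp.\ $\Delta'$) avoids every flop center and persists, unmoved and still outside the strict transform of $\Delta$, all the way to $M_{\mathrm{last}}$. The paper simply tracks a single such object $F$ through the whole chain in one step, whereas you package the same observation as a descending induction over the flops; the content is identical.
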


\begin{proof}
By Lemma~\ref{lem:EremainsStableIfHom0}, this follows immediately from Lemma~\ref{lem:NoHomsOmega} and Lemma~\ref{lem:NoHomsSigma}.
In fact, let $x=[F]\in\Omega\setminus\Delta$.
By Lemma~\ref{lem:EremainsStableIfHom0}, $x$ is not contained in any center of the flops $f_{c,d}$, and thus $x\in\Omega_{\mathrm{last}}\setminus\Delta_{\mathrm{last}}$, as we wanted.
We argue similarly for $x=[F]\in\Sigma'\setminus\Delta'$.
\end{proof}

\begin{prop}\label{prop:FixedCmptsDoNotIntersectMbar}
The two irreducible components $\overline{\Sigma}, \overline{\Omega} \subset \overline{M}$ do not intersect.
\end{prop}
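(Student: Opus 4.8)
The plan is to linearize the ample line bundle $\overline{L}$ on $\overline{M}$ with $\mathrm{c}_1(\overline{L})=\overline{\lambda}$ with respect to $\overline{\tau}$, and to show that $\overline{\tau}$ acts by opposite scalars on the fibres of $\overline{L}$ over $\overline{\Sigma}$ and over $\overline{\Omega}$; disjointness is then automatic. Since $\overline{\lambda}\in H^2(\overline{\tau})_{+}$ we have $\overline{\tau}^{*}\overline{L}\cong\overline{L}$, and as the obstruction to a $\ZZ/2$-linearization lies in $H^2(\ZZ/2,\CC^{*})=0$, we may fix a genuine involutive linearization. At every fixed point $p\in\Fix(\overline{\tau})$ the induced map on the line $\overline{L}_p$ is multiplication by a scalar $\eta(p)\in\{\pm1\}$, and the two loci $\{\eta=+1\}$ and $\{\eta=-1\}$ are closed and complementary, so $\eta$ is locally constant on $\Fix(\overline{\tau})$ even at the singular points of $\overline{M}$. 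In particular $\eta$ is constant on each of the irreducible sets $\overline{\Sigma}$, $\overline{\Omega}$; if these two constants differ, then no point can lie on both, that is, $\overline{\Sigma}\cap\overline{\Omega}=\emptyset$.

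Thus the statement reduces to computing the two scalars and checking that they are opposite. This is where Proposition~\ref{prop:FixedCmptsNotContainedInDelta} enters: since neither $\Sigma_{\mathrm{last}}$ nor $\Omega_{\mathrm{last}}$ lies in the contracted divisor $\Delta_{\mathrm{last}}$, the generic points of these two components lie where $\phi$ is an isomorphism, so $L_{\mathrm{last}}=\phi^{*}\overline{L}$ identifies $\eta$ over $\overline{\Sigma}$ (resp. $\overline{\Omega}$) with the scalar of the $\tau$-linearization of $L_{\mathrm{last}}$ over a generic point of $\Sigma_{\mathrm{last}}$ (resp. $\Omega_{\mathrm{last}}$). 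I would therefore carry out the computation upstairs, on the smooth models in the birational chain, and in fact on a general smooth Lagrangian fibre.

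To compute the scalars I would restrict to a general fibre $\pi^{-1}(C)=\Pic^{0}(C)$ of $M=M_{0,-1}$, on which $\tau$ restricts to the inversion $[-1]$ (fixed points the $2$-torsion, with $\Sigma$ meeting the fibre at the origin and $\Omega$ at the nontrivial $2$-torsion, by Proposition~\ref{tuttobene}) and $\lambda|_{\Pic^{0}(C)}=2\Theta$, with $\Delta$ cutting out the symmetric divisor of Proposition~\ref{prop:Deltadiv2}. For a symmetric line bundle the eigenvalue of $[-1]$ at a $2$-torsion point $x$ relative to the origin is $(-1)^{\mathrm{mult}_{x}(s)+\mathrm{mult}_{0}(s)}$ for any invariant section $s$; taking $s$ to cut out $\Delta$, Lemma~\ref{lem:NoHomsOmega} gives $\mathrm{mult}_{x}(s)=0$ over $\Omega$, while Lemma~\ref{lem:MulteplicityFormula} gives $\mathrm{mult}_{0}(s)=g/2+2$ over $\Sigma$. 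Since $4\mid g$ this parity count by itself makes the two scalars equal on $M$, and the genuine sign discrepancy is produced by the first Mukai flop $f_{0,-1}$, which flops $\Sigma$ (Example~\ref{ex:FanoCptFirstFlop}) so that afterwards $\Sigma'\not\subset\Delta'$ by Lemma~\ref{lem:NoHomsSigma}, whereas $\Omega$ is untouched: tracking the linearization of $\cO(\lambda)$ through this flop, where the fibrewise $\tau$-action is dualized as in Lemma~\ref{lem:InvolutionDiv2}\ref{enum:InvolutionDiv21}, changes the scalar over $\Sigma$ but not over $\Omega$. The later flops and the contraction $\phi$ do not affect the generic behaviour over $\Sigma_{\mathrm{last}}$ and $\Omega_{\mathrm{last}}$.

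The main obstacle is precisely this sign bookkeeping through the flop $f_{0,-1}$: one must compute the effect of the dualization of Lemma~\ref{lem:InvolutionDiv2}\ref{enum:InvolutionDiv21} on the $\tau$-eigenvalue of the fibre of $\cO(\lambda)$ over the flopped $\PP^{g}$, and combine it with the multiplicity $g/2+2$ of Lemma~\ref{lem:MulteplicityFormula} together with the non-vanishing statements of Lemmas~\ref{lem:NoHomsOmega} and~\ref{lem:NoHomsSigma}, so as to conclude that the eigenvalues over $\Sigma_{\mathrm{last}}$ and $\Omega_{\mathrm{last}}$ are genuinely opposite. Once this is established, the local-constancy argument of the first paragraph yields $\overline{\Sigma}\cap\overline{\Omega}=\emptyset$ immediately.
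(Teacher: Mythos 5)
Your overall strategy is exactly the paper's: descend a $G$-linearization of the line bundle with class $\lambda$ to $\overline{L}$ on $\overline{M}$, observe that the resulting character $\eta\in\{\pm1\}$ on the fibres over $\Fix(\overline{\tau})$ is locally constant, use Proposition~\ref{prop:FixedCmptsNotContainedInDelta} to compute $\eta$ at generic points of the smooth models upstairs, and note that the fibrewise Mumford-type parity count on $M$ itself (multiplicity $\frac g2+2$ of $\Delta$ along $\Sigma$ from Lemma~\ref{lem:MulteplicityFormula}, multiplicity $0$ along $\Omega$ from Lemma~\ref{lem:NoHomsOmega}, together with $4\mid g$) gives \emph{equal} characters on $M$, so that the sign discrepancy must be created by the flop $f_{0,-1}$ of $\Sigma$. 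This is precisely the content of Proposition~\ref{prop:LinearizationM}, Theorem~\ref{thm:linearization} and the paper's proof of the present proposition.

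The gap is the step you yourself flag as ``the main obstacle'': you assert, but do not prove, that passing through $f_{0,-1}$ flips the character over $\Sigma$ while leaving the one over $\Omega$ unchanged, and the mechanism you propose for extracting the sign --- the dualization of the fibrewise action from Lemma~\ref{lem:InvolutionDiv2}\ref{enum:InvolutionDiv21} --- cannot by itself produce it: for the first flop the base $Z_{0,-1}$ is a point and $\tau$ acts trivially on both $\Sigma$ and on its dual $\Sigma'$, so the projective duality of the two $\PP^g$'s carries no sign information about the fibre of the line bundle. What actually controls the sign (Proposition~\ref{prop:LinearizationMprime}) is the parity of $m:=\deg(L_{|\gamma})$ for a line $\gamma\subset\Sigma$: on the common resolution~\eqref{eq:firstflopdiagram} one has $g^{*}L\cong g'^{*}L'\otimes\cO_{\widetilde M}(m\Xi)$, and since $\tau$ is antisymplectic and acts trivially on $T_{\Sigma}$ it acts by $-1$ on $N_{\Sigma/M}$, hence by $-1$ on the fibres of $\cO_{\widetilde M}(\Xi)$ along $\Xi$, whence $L'_{|x}\cong(-1)^{m}\rho_{\mathrm{tr}}$ for $x\in\Sigma'$. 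The Mukai pairing gives
\[
m=1+\Delta\cdot\gamma=1+\left(\left(2,-h,\tfrac g2\right),(1,0,1)\right)=1-\left(\tfrac g2+2\right),
\]
which is odd precisely because $4\mid g$; this computation, not the duality of the flop, is the source of the sign. With it supplied, your argument closes and coincides with the paper's.
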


By assuming Proposition~\ref{prop:FixedCmptsDoNotIntersectMbar}, we can now complete the proof of the Main Theorem.

\begin{proof}[Proof of the Main Theorem, divisibility 2 case.]
Let $\mathcal M \to D$ be the pullback of the universal family over $\Def(\overline M)$ to a general disc $D \subset \Def(\overline M, \overline{\tau})$.
By Proposition \ref{uscire} and Corollary \ref{unaclasse}, $\mathcal M \to D$ is a smoothing of $\overline{M}$ and there is a global involution $\mathfrak{T}$ on $\mathcal M$ which preserves the fibers $\mathcal M_t$ and is such that for $t \neq 0$, $\mathfrak{T}_t$ is the involution whose fixed locus we are interested in and for $t=0$, $\mathfrak{T}_0=\overline \tau$.

By Proposition~\ref{prop:FixedCmptsDoNotIntersectMbar}, the fiber at zero $\Fix(\mathfrak{T},\mathcal M)_0$ of the fixed locus $\Fix(\mathfrak{T},\mathcal M)$ is  the disjoint union $\overline{\Sigma} \coprod \overline{\Omega}$.
Moreover, by Proposition~\ref{prop:FixedCmptsNotContainedInDelta}, both components are $g$-dimensional and at a general point of either component the fixed locus $\Fix(\mathfrak{T},\mathcal M)\to D$ is smooth of relative dimension $g$.
By letting $\mathcal{M}^*:=\mathcal M \setminus \mathcal{M}_0$ and $\mathcal F:=\overline{\Fix(\mathfrak{T},\mathcal{M}^*)}$, we get that any fiber $\mathcal{F}_t$ has exactly two components, for all $t\in D$, thus concluding the proof.
\end{proof}

We are left with the task of proving the two propositions.
The key part of the proof is to understand the linearization of the action of $\tau$ on the line bundle associated to $\lambda$.
Let $G:=\{\mathrm{id},\tau\}\cong\mu_2$ be the group acting on each of the birationals model $M_{c,d}$ and $M_{\mathrm{last}}$ of $M$.
We denote by $\rho_\mathrm{tr}$ and $\rho_\mathrm{det}$ the two irreducible representations of $G$, acting respectively by $(\pm 1)$.
We let $L_{c,d}$, respectively $L_{\mathrm{last}}$ be the line bundle on $M_{c,d}$, respectively $M_{\mathrm{last}}$, with first Chern class $\lambda$.

\begin{theo}\label{thm:linearization}
There is a $G$-linearization on $L_{\mathrm{last}}$ such that
\begin{itemize}
    \item if $x\in \Omega_{\mathrm{last}}$, then $L_{\mathrm{last}\,|x}\cong\rho_{\mathrm{tr}}$;
    \item if $x\in \Sigma_{\mathrm{last}}$, then $L_{\mathrm{last}\,|x}\cong\rho_{\mathrm{det}}$.
\end{itemize}
\end{theo}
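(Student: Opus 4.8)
The plan is to exhibit the linearization by transporting a canonical one from $M=M_{0,-1}$ across the flops of Section~\ref{subsec:div2}, and then to compute the scalar by which $\tau$ acts on the fibre of $L_{\mathrm{last}}$ over each of the two components $\Sigma_{\mathrm{last}}$ and $\Omega_{\mathrm{last}}$. Since $\tau$ fixes each component pointwise, over a connected component this scalar is a locally constant element of $\{\pm1\}$, i.e.\ exactly one of the two characters $\rho_{\mathrm{tr}},\rho_{\mathrm{det}}$; the content of the theorem is that the two components receive \emph{opposite} scalars, after which one fixes the global sign (the linearization is unique up to an overall $\pm1$) so that $\Omega_{\mathrm{last}}\mapsto\rho_{\mathrm{tr}}$ and $\Sigma_{\mathrm{last}}\mapsto\rho_{\mathrm{det}}$. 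The bundle $L_{\mathrm{last}}$ with its equivariant structure is obtained from the line bundle $L$ with $\mathrm{c}_1(L)=\lambda$ on $M$, equipped with the $\tau$-equivariant structure coming from the defining anti-autoequivalence $\Psi=\cO_S(-H)\otimes\D$; as the flops $f_{c,d}$ are isomorphisms in codimension one, both $L$ and its linearization extend uniquely to $M_{\mathrm{last}}$.

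First I would show that on $M$ the two components carry the \emph{same} scalar. Restricting $L$ to a smooth Lagrangian fibre $\pi^{-1}(C)=\Pic^0(C)$, Proposition~\ref{prop:Deltadiv2} gives $\lambda|_{\Pic^0(C)}=\delta|_{\Pic^0(C)}=2\Theta$, so $L|_{\Pic^0(C)}\cong\cO(2\Theta)$ is a totally symmetric line bundle and the restriction of the $\tau$-linearization is one of its two symmetric structures. For a totally symmetric line bundle Mumford's function $e_*$ is identically $1$, hence any symmetric structure acts by a single global sign on all $2$-torsion points of $\Pic^0(C)$. Since $\Sigma\cap\Pic^0(C)=\{[\cO_C]\}$ is the origin while $\Omega\cap\Pic^0(C)$ consists of the nonzero $2$-torsion points, the scalar of $\tau$ on $L$ over $\Sigma$ equals the scalar over $\Omega$ on $M$.

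Next I would locate the sign change. The component $\Omega$ is disjoint from every flopping centre (these lie in $\Delta$ by Lemma~\ref{lem:EremainsStableIfHom0}, while $\Omega\not\subset\Delta$ by Lemma~\ref{lem:NoHomsOmega}), so $\Omega_{\mathrm{last}}$ is isomorphic to $\Omega$ near its generic point and inherits the scalar computed on $M$. By contrast $\Sigma$ is \emph{exactly} the centre of the first flop $f_{0,-1}$: by Example~\ref{ex:FanoCptFirstFlop} and Proposition~\ref{prop:fixedindet} this is the Mukai flop dualising $\Sigma=\PP(V)$ into $\Sigma'=\PP(V^\vee)$, where $V=\Ext^1(\cO_S(-H)[1],\cO_S)=H^0(\cO_S(H))$ and $\tau$ acts on $V$ by a scalar $c\in\{\pm1\}$. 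Since $\lambda=\widetilde a_{0,-1}-\tfrac{g+2}{2}f$ with $\widetilde a_{0,-1}$ the flopping class, a line $\ell\subset\Sigma$ has $\lambda\cdot\ell=-\tfrac{g+2}{2}$, so $L|_\Sigma=\cO_{\PP(V)}(-\tfrac{g+2}{2})$ has \emph{odd} degree because $4\mid g$, while dually $L|_{\Sigma'}=\cO_{\PP(V^\vee)}(+\tfrac{g+2}{2})$. Comparing the two $\tau$-linearizations through the common resolution $\widetilde M=\Bl_\Sigma M=\Bl_{\Sigma'}M'$ and using the Yoneda pairing $V\otimes V^\vee\to\Ext^2(\cO_S,\cO_S)=H^2(\cO_S)$ shows that the scalar over $\Sigma'$ equals the scalar over $\Sigma$ multiplied by $s^{(g+2)/2}$, where $s$ is the sign by which $\Psi$ acts on $H^2(\cO_S)$. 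As $\Psi$ is antisymplectic it acts by $-1$ on $H^{2,0}(S)$, hence $s=-1$, and the odd exponent yields a genuine flip. Finally $\Sigma'\not\subset\Delta'$ (Lemma~\ref{lem:NoHomsSigma}), so $\Sigma'$ avoids all later flopping centres and $\Sigma_{\mathrm{last}}$ inherits the scalar of $\Sigma'$. Chaining the equalities, the scalars over $\Sigma_{\mathrm{last}}$ and $\Omega_{\mathrm{last}}$ differ by $-1$, which is the theorem.

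I expect the main obstacle to be the precise sign bookkeeping in the flop step. Two ingredients must be pinned down: that the degree $\tfrac{g+2}{2}$ of $\lambda$ on the two projective spaces is odd, which uses $4\mid g$ essentially (for $g\equiv2\pmod4$ the exponent would be even and no flip would occur, matching the nonexistence of divisibility $2$ polarizations), and the identification of the flop-induced sign with the antisymplectic action $s=-1$ of $\Psi$ on $H^2(\cO_S)$, which requires tracking Serre-duality and Koszul signs through the Yoneda pairing and the tautological sequences on $\PP(V)$ and $\PP(V^\vee)$. A secondary check is that the restriction of the $\tau$-linearization to a fibre really is a symmetric structure in Mumford's sense, so that total symmetry of $\cO(2\Theta)$ applies and forces the two components to agree on $M$ before the flop.
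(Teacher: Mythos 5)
Your overall strategy coincides with the paper's: construct a linearization on $M$, show both components of $\Fix(\tau,M)$ carry the same character there, detect a sign flip for $\Sigma$ at the first flop, and transport everything to $M_{\mathrm{last}}$ using the fact that $\Omega$ and $\Sigma'$ avoid all subsequent flopping centres (Lemma~\ref{lem:EremainsStableIfHom0}, Lemma~\ref{lem:NoHomsOmega}, Lemma~\ref{lem:NoHomsSigma}). The numerology also matches: your degree $-(g+2)/2$ of $\lambda$ on a line of $\Sigma$ is the paper's $m=1+\Delta\cdot\gamma$, and its oddness (from $4\mid g$) is exactly what Proposition~\ref{prop:LinearizationMprime} uses. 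However, the two places where you diverge both have problems. First, your claim that the two components carry the same scalar on $M$ rests on the assertion that $L|_{\Pic^0(C)}$ ``is a totally symmetric line bundle'' because its class is $2\Theta$. That is not automatic: a symmetric line bundle algebraically equivalent to $2\Theta$ need not be totally symmetric (already on an elliptic curve, $\cO(0+x)$ with $x$ a nonzero $2$-torsion point is symmetric of degree $2$ with $e_*\not\equiv 1$). To know that $\cO(\Delta)|_{\Pic^0(C)}$ is the totally symmetric representative you need either a Drezet--Narasimhan type identification for the theta divisor of the rank-$2$, canonical-determinant bundle $A^\vee_{|C}$, or what the paper actually does: compute $\mult_{[\cO_C]}\bigl(\Delta|_{\Pic^0(C)}\bigr)=h^0(C,A^\vee_{|C})=g/2+2$ (Lemma~\ref{lem:MulteplicityFormula}) and observe that it is even precisely because $4\mid g$, while the multiplicity at a generic nonzero $2$-torsion point is $0$ (Lemma~\ref{lem:NoHomsOmega}). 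This parity statement is the entire content of the step; your appeal to $e_*\equiv 1$ presupposes it rather than proving it, and it is also where the hypothesis $4\mid g$ enters this half of the argument.

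Second, your mechanism for the sign flip at the flop --- that the scalar over $\Sigma'$ differs from that over $\Sigma$ by $s^{(g+2)/2}$ with $s$ the action of $\Psi$ on $H^2(S,\cO_S)$ --- is asserted rather than proved, and you yourself flag the Koszul/Serre-duality bookkeeping as the obstacle. The paper's route is shorter and avoids dualizing $V$ altogether: on the common resolution $\widetilde M$ one has $g^*L\cong g'^{*}L'\otimes\cO_{\widetilde M}(m\Xi)$ with $m$ odd, and the lifted involution fixes the exceptional divisor $\Xi=\PP(N_{\Sigma/M})$ pointwise (since $\tau$ acts by $-1$ on $N_{\Sigma/M}\cong\Omega^1_{\Sigma}$), hence acts by $-1$ on $\cO_{\widetilde M}(\Xi)|_{\Xi}=N_{\Xi/\widetilde M}$; comparing fibres at a point of $\Xi$ then gives the flip in one line. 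I would replace both of your delicate sub-steps by these direct computations: as written, the first is a genuine gap and the second an incomplete sketch, even though the statements you are aiming at are correct and the architecture of the argument is the same as the paper's.
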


The above theorem provides the final ingredient to finish the proof of the Main Theorem in divisibility ~2, which establishes that the fixed locus of $\tau$ consists of two connected components. Theorem \ref{thm:linearization} moreover shows that one can distinguish between these two connected components 
according to the action of $\tau$ on the fibers of $L_{\mathrm{last}}$ at the fixed points. In particular, one of the two components will always be contained in the base locus of the linear system $|\lambda|$. This is exactly what happens in the case of the LLSvS HK manifold associated to cubic fourfolds, where the cubic itself is the base locus of the linear system (\cite{LLSvS:cubics}).

The idea of the proof of Theorem~\ref{thm:linearization} is to first study the linearization on $M$ and then compute how this linearization changes on the various flops $f_{c,d}$.

\begin{prop}\label{prop:LinearizationM}
There is a $G$-linearization on $L=L_{0,-1}$ such that if $x\in \Omega\cup \Sigma$, then $L_{|x}\cong\rho_{\mathrm{tr}}$.
\end{prop}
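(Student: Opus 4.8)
The plan is to realize $L=L_{0,-1}$ as a tensor product whose two factors each carry a canonical $G$-linearization, and then compute the induced character on the fibers over the two fixed components separately. Since $\lambda=f+\delta$, I would write
\[
L\cong \pi^{*}\cO_{\PP^g}(1)\otimes\cO_M(\Delta),
\]
and identify $\cO_M(\Delta)$ with the determinant line bundle $\cD$ of Proposition~\ref{prop:Deltadiv2}, whose class is $\vartheta(2,-h,g/2)=\delta=\cl(\Delta)$. The factor $\pi^{*}\cO_{\PP^g}(1)$ is the easy one: because $\tau$ acts fiberwise, $\pi\circ\tau=\pi$ provides a tautological linearization acting trivially on the fiber over every point of $\Fix(\tau)$, so all the content is concentrated in the linearization of $\cD$.

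The engine for linearizing $\cD$ is the canonical isomorphism
\[
A^{\vee}\otimes\Psi(F)\cong\D(A^{\vee}\otimes F),
\]
which I would establish from $\Psi=\cO_S(-H)\otimes\D$ together with the self-duality $A^{\vee}(-H)\cong A$ of Lemma~\ref{lem:RestrictionAtoCurves}. Applying $\mathbf{R}\Hom(\cO_S,\blank)$ and Serre duality on $S$ gives $\mathbf{R}\Hom(A,\tau F)\cong\mathbf{R}\Hom(A,F)^{\vee}[-1]$, hence a canonical isomorphism $\tau^{*}\cD\cong\cD$, i.e.\ a $G$-linearization. Over $\Omega$ the computation is immediate: by Lemma~\ref{lem:NoHomsOmega} we have $\Omega\not\subset\Delta$, so at a general $F=i_{C,*}(\xi)\in\Omega$ one has $\mathbf{R}\Hom(A,F)=0$, the natural determinant section is a nonvanishing $\tau$-invariant frame, and the action on $\cD_{|F}$ is $+1$; by connectedness of $\Omega$ this holds over all of $\Omega$, giving $L_{|x}\cong\rho_{\mathrm{tr}}$ there.

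The component $\Sigma$ is the subtle case, since $\Sigma\subset\Delta$ with $\mult_{\Sigma}\Delta=2+\tfrac g2$ by Lemma~\ref{lem:MulteplicityFormula}, so the defining section vanishes and the character must be read off from the determinant line directly. At $F=\cO_C\in\Sigma$ the above isomorphism equips $V^{\bullet}=\mathbf{R}\Gamma(C,A^{\vee}_{|C})$ with a shifted self-duality, and the induced scalar on $\det V^{\bullet}=\cD_{|F}^{-1}$ equals $\epsilon^{r}$, where $r=h^0(C,A^{\vee}_{|C})=2+\tfrac g2$ (Lemma~\ref{lem:RestrictionAtoCurves}) and $\epsilon=\pm1$ is the symmetry type of the resulting perfect pairing $H^0(C,A^{\vee}_{|C})\otimes H^1(C,A^{\vee}_{|C})\to\CC$. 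That pairing is Serre duality on $C$ composed with the self-duality $A^{\vee}_{|C}\cong A_{|C}\otimes\omega_C$ coming from the alternating form $A_{|C}\otimes A_{|C}\to\det A_{|C}$; this forces $\epsilon=-1$. Since $4\mid g$, the integer $r=2+g/2$ is \emph{even}, so $\epsilon^{r}=+1$, and again $L_{|x}\cong\rho_{\mathrm{tr}}$ over $\Sigma$. It is exactly here that the divisibility hypothesis enters, and it is also the reason the two components later separate after the flops.

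The main obstacle is the sign bookkeeping in the last paragraph: correctly computing the action of $\tau$ on the determinant line of a complex carrying a $(-1)$-shifted self-duality, keeping track of the Koszul sign from the shift and the intrinsic symmetry of the pairing, and verifying that this is the \emph{same} global linearization for which $s_\Delta$ is invariant (which is confirmed by the $r=0$ computation on $\Omega$). I would isolate this as a short linear-algebra lemma on the determinant character of a self-dual perfect complex, so that the only geometric inputs needed are the parity of $r$ and the alternating nature of the rank-two form on $A$; everything else is the clean formal manipulation of $\Psi$, $\D$, and Serre duality already set up above.
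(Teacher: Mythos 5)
Your decomposition $L\cong\pi^{*}\cO_{\PP^g}(1)\otimes\cO_M(\Delta)$, the trivial linearization on the pullback factor, and the treatment of $\Omega$ (a nonvanishing invariant frame of $\cD=\cO_M(\Delta)$ at a general point, since $\Omega\not\subset\Delta$) coincide with the paper's proof. Where you genuinely diverge is over $\Sigma$: the paper linearizes $\cO_M(\Delta)$ by the rule $1/f\mapsto 1/\tau^{*}(f)$ on a local equation $f$ of $\Delta$ and reads the character at a fixed point as $(-1)^{m}$, where $m$ is the vanishing order of $f$ in the anti-invariant (fiber) directions; the only input is then Lemma~\ref{lem:MulteplicityFormula}, $\mult_{\Sigma}\Delta=h^0(C,A^{\vee}_{|C})=2+\tfrac g2$, which rests on the Brill--Noether multiplicity formula of \cite{CMTB:Singularities}. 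You instead linearize $\cD$ through the derived self-duality $\D(A^{\vee}\otimes F)\cong A^{\vee}\otimes\Psi(F)$ and compute the character on $\det\mathbf{R}\Gamma(C,A^{\vee}_{|C})$ as $\epsilon^{r}$ with $r=h^0(C,A^{\vee}_{|C})$ and $\epsilon$ the symmetry type of the induced pairing $H^0\otimes H^1\to\CC$. The two answers agree, $(-1)^{2+g/2}=+1$ since $4\mid g$, and each cross-checks the other; your route avoids the multiplicity lemma at the price of the sign analysis of a $(-1)$-shifted self-dual two-term complex. Two points need to be made explicit for the argument to be complete: first, that your duality linearization is the one for which the canonical section $s_\Delta$ is invariant --- this does follow as you indicate, because $\tau^{*}s_\Delta$ and $s_\Delta$ have the same divisor, hence differ by a constant that is read off at any fixed point of $\Omega$ off $\Delta$, where your computation gives $+1$; and second, the identity $\epsilon=-1$, which is the one nontrivial sign in the whole proof and carries the entire dependence on the parity of $2+g/2$. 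Your identification of its source (the alternating wedge form $A\otimes A\to\det A$ on the rank-two bundle, composed with Serre duality on $C$, which contributes no Koszul sign between degrees $0$ and $1$) is correct and consistent with the paper's $(-1)^{\mult}$; isolating it as a lemma on determinants of $\epsilon$-self-dual perfect complexes, as you propose, is the right way to make it airtight.
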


\begin{proof}
We can write
\[
L=\cO_{M}(\Delta)\otimes\pi^*\cO_{\PP^g}(1).
\]
Each factor is a $G$-invariant line bundle.
We define a $G$-linearization on $L$ by considering the trivial $G$-linearization on $\pi^*\cO_{\PP^g}(1)$ and defining an explicit $G$-linearization on the line bundle $\cO_{M}(\Delta)$ as follows.
We write locally
\[
\mathcal{I}_{\Delta} = (f)
\]
and lift the $G$-action on $\cO_{M}(\Delta)$ by mapping the local generator $1/f$ as follows:
\[
\frac 1f \longmapsto \frac{1}{\tau^*(f)}.
\]

Let $x\in \Omega\cup \Sigma$, i.e., $\tau(x)=x$.
We choose local coordinates
\[
y_1,\dots,y_g, t_1,\dots,t_g
\]
centered at $x$ such that
\[
\begin{cases}
\tau^*(y_i)=-y_i, \qquad &y_i\text{'s coordinates ``on the fibers of }\pi\text{''}\\
\tau^*(t_i)=t_i, \qquad &t_i\text{'s pull-back of coordinates on }\PP^g.
\end{cases}
\]
Since $\tau$ preserves the zero-locus of $f$, we have that
\[
L_{|x} \cong \begin{cases}
\rho_{\mathrm{det}}, & \text{ if }f(-y,t)=-f(y,t),\text{ i.e., if }\mathrm{mult}_{(0,0)}(f)\text{ is odd}\\
\rho_{\mathrm{tr}}, & \text{ if }f(-y,t)=f(y,t),\text{ i.e., if }\mathrm{mult}_{(0,0)}(f)\text{ is even.}
\end{cases}
\]
The statement now follows immediately from Lemma~\ref{lem:MulteplicityFormula} if $x\in\Sigma$, and from Lemma~\ref{lem:NoHomsOmega} if $x\in\Omega$.
\end{proof}

We now return to our study of the first flop: $f\colon M\dashrightarrow M'$ which is the Mukai flop at $\Sigma$.
Then the linearization on $L$ of Proposition~\ref{prop:LinearizationM} on $M$ induces a linearization on $L'=L_{0,0}$ on $M'$.
In fact, it induces a linearization on each $L_{c,d}$ on $M_{c,d}$ and on $L_{\mathrm{last}}$ as well.

\begin{prop}\label{prop:LinearizationMprime}
With respect to the above linearization on $L'$ on $M'$ we have that if $x\in \Sigma'$, then $L'_{ |x}\cong\rho_{\mathrm{det}}$.
\end{prop}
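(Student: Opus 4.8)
The plan is to resolve the Mukai flop $f$ as in diagram~\eqref{eq:firstflopdiagram}, writing $g\colon\wt M\to M$ and $g'\colon\wt M\to M'$ for the two blow-downs and $\Xi$ for the common exceptional divisor, and to compare the two pullbacks of the line bundle with class $\lambda$ together with their $G$-linearizations. The induced linearization on $L'$ is, by construction, the unique one agreeing with the chosen linearization on $L$ over the common open set $M\setminus\Sigma\cong M'\setminus\Sigma'\cong\wt M\setminus\Xi$. Thus $g^*L$ and $g'^*L'$ are $G$-linearized line bundles on $\wt M$ agreeing off $\Xi$, so they differ by $\cO_{\wt M}(a\Xi)$ for some $a\in\ZZ$, carrying the natural $G$-linearization, i.e.\ the one whose invariant canonical section trivializes it away from $\Xi$; indeed a character of $G$ is constant, so the linearization is forced by its triviality on the dense open set.

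To compute $a$ I would use the numerical identity $g^*\lambda-g'^*\lambda=a[\Xi]$ and intersect with a lift $\wt\gamma\subset\Xi$ of a line $\gamma\subset\Sigma\cong\PP^g$ contracted by $g'$, exactly as in the proof of Lemma~\ref{lem:NoHomsSigma}, where $\Xi\cdot\wt\gamma=-1$ is established. Since $g'$ contracts $\wt\gamma$ while $g$ maps it isomorphically to $\gamma$, the projection formula gives $-a=(g^*\lambda-g'^*\lambda)\cdot\wt\gamma=\lambda\cdot\gamma$; and with $\lambda=f+\delta$, $f\cdot\gamma=1$ and $\delta\cdot\gamma=\Delta\cdot\gamma=-(g/2+2)$ (the latter using $\mult_\Sigma\Delta=g/2+2$ from Lemma~\ref{lem:MulteplicityFormula}), this yields $a=g/2+1$. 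The crucial point is that $a$ is \emph{odd}, which is precisely where the hypothesis $4\mid g$ enters, as then $g/2$ is even.

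Finally I would read off the fiber representation of $\cO(\Xi)$ at the relevant fixed points. Because $\Sigma$ is a Lagrangian connected component of $\Fix(\tau,M)$ (Proposition~\ref{tuttobene}), the differential of $\tau$ acts as $-\mathrm{id}$ on $N_{\Sigma/M}$; hence $\tau$ acts trivially on $\Xi=\PP(N_{\Sigma/M})$, so $\Xi\subset\Fix(\tau,\wt M)$, and it acts by $-1$ on the coordinate transverse to $\Xi$, i.e.\ on $N_{\Xi/\wt M}=\cO_\Xi(-1)$. Thus at every $w\in\Xi$ the natural linearization gives $\cO(\Xi)_{|w}\cong\rho_{\mathrm{det}}$, whence $\cO(a\Xi)_{|w}\cong\rho_{\mathrm{det}}^{\,a}=\rho_{\mathrm{det}}$ since $a$ is odd. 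Then for $x'\in\Sigma'$ I pick any $w\in(g')^{-1}(x')\subset\Xi$ (all such points are $\tau$-fixed) and conclude
\[
L'_{|x'}=(g'^*L')_{|w}=(g^*L)_{|w}\otimes\cO(a\Xi)_{|w}^{-1}=L_{|g(w)}\otimes\rho_{\mathrm{det}}=\rho_{\mathrm{tr}}\otimes\rho_{\mathrm{det}}=\rho_{\mathrm{det}},
\]
using $g(w)\in\Sigma$ together with Proposition~\ref{prop:LinearizationM} to identify $L_{|g(w)}\cong\rho_{\mathrm{tr}}$. I expect the main obstacle to be the bookkeeping of the $G$-linearizations through the two pullbacks: one must verify that the induced linearization on $L'$ is the one agreeing with $g^*L$ off $\Xi$, that the difference bundle $\cO(a\Xi)$ carries the natural linearization, and that its fiber character at a point of $\Xi$ is correctly computed from the $(-1)$-action on the normal direction to $\Sigma$. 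Once these are checked, the statement reduces to the single parity fact $a=g/2+1\equiv1\pmod2$, which is exactly the arithmetic input characteristic of divisibility~$2$.
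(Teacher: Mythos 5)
Your argument is correct and is essentially the paper's own proof: both resolve the flop, write $g^*L$ and $g'^{*}L'$ as differing by a multiple of $\Xi$, compute that multiple by intersecting with a line in $\Sigma$ to get $\pm(g/2+1)$, and use $4\mid g$ to conclude the exponent is odd, so the fiber character flips from $\rho_{\mathrm{tr}}$ to $\rho_{\mathrm{det}}$ (you are in fact more careful than the paper about why $\cO(\Xi)_{|w}\cong\rho_{\mathrm{det}}$ via the $(-1)$-action on the normal direction). The only small caveat is your parenthetical derivation of $\Delta\cdot\gamma=-(g/2+2)$ from $\mult_\Sigma\Delta$ alone, which also requires $\wt{\Delta}\cdot\wt\gamma=0$; the paper instead gets this number directly as the Mukai pairing $\bigl(\delta,a_{0,-1}\bigr)=\bigl((2,-h,g/2),(1,0,1)\bigr)$.
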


\begin{proof}
Let
\[
m:=\mathrm{deg}(L_{|\gamma}),
\]
where $\gamma\subset\Sigma\cong\PP^g$ is a line. 
Then, in the notation of Diagram \ref{eq:firstflopdiagram} giving the resolution of the Mukai flop $f:=f_{0,-1}$, we have
\[
g^*L \cong g^{\prime*}L'\otimes \cO_{\widetilde{M}}(m\Xi).
\]
In particular, if $x\in\Sigma'$, then
\[
L'_{|x} \cong (-1)^m \rho_{\mathrm{tr}}.
\]
To prove the statement, we need to show that $m$ is odd.

Since $L\cong \cO_M(\Delta)\otimes\pi^*\cO_{\PP^g}(1)$, we have that
\[
m=1+\Delta\cdot\gamma.
\]
By construction, since $\delta=\cl(\Delta$), where $\delta=(2,-h,g/2)$, and the first flop is induced by the class $a_{0,-1}=(1,0,1)$, we get
\begin{equation*}
\Delta\cdot\gamma = \left(\left(2,-h,\frac g2\right),(1,0,1)\right)=-\left( \frac g2 + 2\right).
\end{equation*}
Since $4\,|\,g$, this completes the proof.
\end{proof}

\begin{proof}[Proof of Theorem~\ref{thm:linearization}.]
This follows by a similar argument as for Proposition~\ref{prop:FixedCmptsNotContainedInDelta}.
If $x=[F]\in\Omega\setminus\Delta$, then by Lemma~\ref{lem:EremainsStableIfHom0} together with Lemmas ~\ref{lem:NoHomsOmega} and ~\ref{lem:NoHomsSigma}, we have $L_{\mathrm{last}\,|x}\cong L_{|x}$.  But we know $L_{|x}\cong\rho_{\mathrm{tr}}$, by Proposition~\ref{prop:LinearizationM}.
We argue similarly for $x\in\Sigma'\setminus\Delta'$, by using Proposition~\ref{prop:LinearizationMprime}.
\end{proof}

\begin{proof}[Proof of Proposition~\ref{prop:FixedCmptsDoNotIntersectMbar}.]
The divisorial contraction $\phi\colon M_{\mathrm{last}}\to \overline{M}$ is $G$-equivariant and by definition $L_{\mathrm{last}}\cong\phi^*(\overline{L})$, for a line bundle $\overline{L}$ on $\overline{M}$ together with a $G$-linearization.
Then the statement follows immediately from Theorem~\ref{thm:linearization} and Proposition~\ref{prop:FixedCmptsNotContainedInDelta}.
\end{proof}


\vspace{0.5cm}

\end{document}